\newtheorem{theorem}{Theorem}[section]
\newtheorem{lemma}[theorem]{Lemma}
\newtheorem{proposition}[theorem]{Proposition}
\newtheorem{corollary}[theorem]{Corollary}
\theoremstyle{definition}
\newtheorem{definition}[theorem]{Definition}
\newtheorem{example}[theorem]{Example}
\theoremstyle{remark}
\newtheorem{remark}[theorem]{Remark}
\numberwithin{equation}{section}
\newcommand{\N}{\mathbb{N}}
\newcommand{\A}{{\mathcal A}}
\newcommand{\B}{{\mathcal B}}
\newcommand{\D}{{\mathcal D}}
\newcommand{\G}{{\mathcal G}}
\newcommand{\T}{\mathbf{T}}
\renewcommand{\t}{\mathbf{t}}
\renewcommand{\a}{\mathbf{a}}
\newcommand{\e}{\mathbf{e}}
\renewcommand{\u}{\mathbf{u}}
\renewcommand{\v}{\mathbf{v}}
\renewcommand{\P}{\mathbb{P}}
\newcommand{\CP}{\mathbb{CP}}
\newcommand{\C}{\mathbb{C}}
\renewcommand{\H}{\mathcal{H}}
\newcommand{\R}{\mathbb{R}}
\newcommand{\Hilb}{\mathrm{Hilb\,}}
\renewcommand{\span}{\mathrm{span}}
\newcommand{\supp}{\mathrm{supp}}
\newcommand{\dx}{\partial / \partial x}
\newcommand{\Z}{\mathbb{Z}}
\newcommand{\ZZ}{\mathcal{Z}}
\newcommand{\Dir}{\mathsf{Dir}}
\newcommand{\x}{\mathsf{x}}
\newcommand{\y}{\mathsf{y}}
\def\<{\left<}
\def\>{\right>}
\def\dd#1{{\partial}/{\partial x_{#1}}}
\def\<{\left<}
\def\>{\right>}
\def\ll{\langle\kern-3pt\langle}
\def\rr{\rangle\kern-3pt\rangle}
\begin{document}

\title{Combinatorics and geometry of power ideals.}


\author{Federico Ardila}
\address{Department of Mathematics, San Francisco State University, 1600 Holloway Ave, San Francisco, CA 94110, USA. }
\curraddr{}
\email{federico@math.sfsu.edu}
\thanks{Supported in part by NSF Award DMS-0801075.}

\author{Alexander Postnikov}
\address{Department of Mathematics, Massachusetts Institute of Technology, 77 Massachusetts Ave, Cambridge, MA 02139, USA.}
\curraddr{}
\email{apost@math.mit.edu}
\thanks{Supported in part by NSF CAREER Award DMS-0504629.}

\subjclass[2000]{05A15; 05B35; 13P99; 41A15; 52C35}

\date{}

\dedicatory{}

\begin{abstract}

We investigate ideals in a polynomial ring which are generated by powers of linear forms. Such ideals are closely related to the theories of fat point ideals, Cox rings, and box splines.

We pay special attention to a family of power ideals that arises naturally from a hyperplane arrangement $\A$. We prove that their Hilbert series are determined by the combinatorics of $\A$, and can be computed from its Tutte polynomial. We also obtain formulas for the Hilbert series of certain closely related fat point ideals and zonotopal Cox rings.

Our work unifies and generalizes results due to Dahmen-Micchelli, Holtz-Ron, Postnikov-Shapiro-Shapiro, and Sturmfels-Xu, among others. It also settles a conjecture of Holtz-Ron on the spline interpolation of functions on the lattice points of a zonotope.
\end{abstract}

\maketitle

\section{Introduction}

A \emph{power ideal} is an ideal $I$ in the polynomial ring $\C[V]$ generated by a collection of powers of homogeneous linear forms. One can regard the polynomials in $I$ as differential equations; the space of solutions $C$ of the resulting system is called a \emph{power inverse system}. We are particularly interested in a family of power ideals and power inverse systems which arise naturally from a hyperplane arrangement.

Such ideals arise naturally in several different settings. The following are some motivating examples:

\begin{itemize} 
\item (Postnikov-Shapiro-Shapiro \cite{PSS})
The flag manifold $Fl_n = SL(n,\C)/B$ has a flag of tautological vector bundles $E_0 \subset E_1 \subset \cdots \subset E_n$ and associated line bundles $L_i = E_i/E_{i-1}$. Let $w_i$ be the two-dimensional Chern form of $L_i$ in $Fl_n$. The ring generated by the forms $w_1, \ldots, w_n$ is isomorphic to 
\[
\Z[x_1, \ldots, x_n] 
\left/ 
\left< 
(x_{i_1} + \cdots + x_{i_k})^{k(n-k)+1} \, : \, 1 \leq i_1 < \cdots < i_k \leq n
\right> 
\right..
\]
Its dimension equals the number of forests on the set $[n]=\{1,\ldots, n\}$ and its Hilbert series enumerates these forests by number of inversions. The ideal above is one of the power ideals associated to the braid arrangement.

\item (Dahmen-Micchelli \cite{DM}, De Concini-Procesi \cite{DP}, Holtz-Ron \cite{HR})
Given a finite set $X=\{a_1, \ldots, a_n\}$ of vectors spanning $\R^d$, let the zonotope $Z(X)$ be the Minkowski sum of these vectors. The \emph{box spline} $B_X$ is a piecewise polynomial function on the zonotope $Z(X)$, defined as the convolution product of the uniform measures on the line segments from $0$ to each $a_i$. The box spline can be described combinatorially as a finite sum of local pieces. These local pieces, together with their derivatives, span a finite dimensional space of polynomials $D(X)$ which is one of the central objects in box spline theory. The space $D(X)$ is one of the power inverse systems associated to a hyperplane arrangement; its dimension is equal to the number of bases of $\R^d$ contained in $X$. Additionally, there are an \emph{external} and an \emph{internal} variant of the space $D(X)$ which also fit within this framework.

\item (Emsalem-Iarrobino \cite{EI}; Geramita-Schenck \cite{GS}) 
Given points $p_1, \ldots, p_n$ in projective space and positive integers $o_1, \ldots, o_n$, the corresponding \emph{fat point ideal} is the ideal of polynomials which vanish at each $p_i$ to order $o_i$. The Hilbert series of a fat points ideal can be expressed in terms of Hilbert series of power ideals.

\item (Sturmfels-Xu, \cite{SX})
A finite set of points $\{p_1, \ldots, p_n\}$ in $\P^{d-1}$ determines a Cox-Nagata ring, which is a multigraded invariant ring of a polynomial ring.  It can be interpreted as the Cox ring of the variety obtained from $\P^{d-1}$ by blowing up $p_1, \ldots, p_n$. Nagata used such rings to settle Hilbert's 14th problem. The 
multigraded Hilbert series of a Cox-Nagata ring can be expressed in terms of the Hilbert series of a family of power inverse systems. Certain subrings of Cox rings, called zonotopal Cox rings, are intimately related to the power inverse systems of a hyperplane arrangement.

\item (Berget \cite{Be}, Brion-Verge \cite{BV}, Orlik-Terao \cite{OT}, Proudfoot-Speyer \cite{PrS}, Terao \cite{Te})
Given a hyperplane arrangement determined by the linear functionals $\alpha_1, \ldots, \alpha_n$, various subalgebras of the algebra generated by $\frac1{\alpha_1}, \ldots, \frac1{\alpha_n}$ have been studied, in some cases with additional structure. Some of these algebras are related to the objects in this paper, as outlined in \cite{Be}.
\end{itemize}

The paper is organized as follows. In Section \ref{sec:ideals} we discuss general power ideals $I(\rho)$ and the corresponding inverse systems $C(\rho)$, and associate a projective variety to each power ideal. In Section \ref{sec:poly} we associate a power ideal $I(\rho_f)$ to each homogeneous polynomial $f(\x)$, whose associated variety is the hypersurface $f(\x)=0$. We show that the smoothness of the hypersurface is detected by the Hilbert series of $C(\rho_f)$. Section \ref{sec:hyparr} is devoted to the special case that most interests us: the family of power ideals $I_{\A,k}$ and inverse systems $C_{\A,k}$ associated to a hyperplane arrangement. We compute the Hilbert series of the spaces $C_{\A,k}$ in terms of the combinatorics of $\A$, and find explicit bases for them. These computations and constructions simultaneously generalize numerous results in the literature, and prove a conjecture of Holtz and Ron about these spaces. Section \ref{sec:fatpoints} applies the results of Section \ref{sec:hyparr} to compute the Hilbert series of a family of fat point ideals which one can naturally associate to $\A$. Section \ref{sec:zono} then applies these results to give an explicit formula for the multigraded Hilbert series of the zonotopal Cox ring of $\A$. We conclude with some open questions.

\section{Power ideals and inverse systems}\label{sec:ideals}
\subsection{Power ideals}
Let $V\simeq \C^n$ be a finite-dimensional vector space over $\C$ and $V^*$ the dual space. 

\begin{definition}
A \emph{power ideal} is an ideal in the polynomial ring 
$\C[V]$ generated by a collection of powers of homogeneous linear 
forms such that these linear forms span $V$; \emph{i.e.}, an ideal of the form 
$\left< h_i^{r_i}: i \in I\right>$ where $I$ is some indexing set, the $h_i$s are linear forms which span $V$, and the $r_i$s are non-negative integers.
%
\end{definition}

Since the linear forms $h$ span the space $V$, the algebra $A=\C[V]/I$ has finite dimension
$\dim A > 0$. The ideal $I$ is homogeneous so the algebra $A$ is graded:
$A=A_0\oplus A_1 \oplus A_2 \oplus \cdots$.  In this paper we 
calculate the dimension of $A$ and its Hilbert series
$\Hilb A = \sum_{i\geq 0} \dim A_i \, q^i$ for an important family of power ideals $I$.

\begin{example}  Let $I = \left< x_1^4, x_2^2, (x_1+x_2)^3\right>\subset \C[x_1, x_2]$.
The algebra $A=\C[x_1,x_2]/I$ has the basis $1,x_1,x_2,x_1^2,x_1x_2, x_1^3$, so
$\Hilb (A;q) = 1 + 2q + 2q^2 + q^3$.
\end{example}

Let $\rho: \P V\to \N$ be a nonnegative integer function on
the projective space $\P V \simeq \CP^{n-1}$.
We will identify $\rho$ with a function on $V\setminus\{0\}$
such that $\rho(t\cdot a) = \rho(a)$ for $t\in\C\setminus\{0\}$.  
Let $I(\rho)\in \C[V]$ be the power ideal generated by the powers of linear
forms $h^{\rho(h)+1}$ for all $h\in V\setminus \{0\}$.

Any power ideal $I$ is of the form $I(\rho)$.  
Indeed, for any linear form $h\in V$, there is some positive integer $r$ such that $h^r \in I$.  
Let $\rho_I:\P V\to \N$ be the function such that $\rho_I(h)+1$ equals
the {\it minimum\/} integer $r$ such that $h^r \in I$.  This is clearly the unique minimum function $\rho: \P V \to \N$ such that $I = I(\rho)$.

\begin{definition}
Given a power ideal $I$, let the \emph{directional degree function} of $I$ be $\rho_I$, the minimum function from $\P V \to \N$ such that $I = I(\rho_I)$. 
\end{definition}

The name we give to these functions is justified by Proposition \ref{prop:dirdegree}.

Let $\Dir(V)$ be the set of directional degree functions on $\P V$.
Then power ideals $I$ in $\C[V]$ are in bijection with $\Dir(V)$.
For any nonnegative integer function $\rho$ on $\P V$ there is a directional degree function $\rho'\in \P V$
such that $I(\rho)=I(\rho')$.

We say that a set of points $h_1,\dots,h_N\in V\setminus\{0\}$ is a
{\it generating set\/} for a power ideal $I(\rho)$
if $I(\rho) = \< h_1^{\rho(h_1)+1},\ldots,h_N^{\rho(h_N)+1}\>$. Hilbert's basis theorem guarantees the existence of such a set.

These concepts raise several natural questions, which we do not address here.

\medskip
\noindent
{\bf Questions.}  Find a nice description of the space $\Dir(V)$ of directional degree functions on $\P V$? 
Find an efficient way of computing the directional degree function $\rho_I$ associated to a given power ideal $I = \<h_1^{r_1},\dots, h_N^{r_N}\>$ or, more generally, to an arbitrary non-negative integer function on $\P V$. Find a generating collection of points for a power ideal $I(\rho)$? 
\medskip

\subsection{Inverse systems}

There is a very useful dual way of thinking about power ideals in terms of Macaulay inverse systems, which we now outline.

\begin{definition}
A \emph{Macaulay inverse system} (or simply \emph{inverse system}) is a finite dimensional space of polynomials which is closed under differentiation with respect to the variables.
\end{definition}

First, we define a pairing $\<\cdot,\cdot\>$ between the polynomial rings $\C[V]$ and $\C[V^*]$.
Let $x_1,\dots,x_n$ be a basis of $V$, and let $y_1,\dots,y_n$ be the dual basis of $V^*$.
For each $f(\x)=f(x_1,\dots,x_n) \in \C[V]$,
define a differential operator $f(\partial/\partial \y) := f(\partial/\partial y_1,\dots, \partial/\partial y_n)$ on $\C[V^*]$.
Similarly, for each $g(\y)=g(y_1,\dots,y_n)\in \C[V^*]$,
define a differential operator on $\C[V]$ by
$g(\partial/\partial \x) := g(\partial/\partial x_1,\dots, \partial/\partial x_n)$.
For $f \in \C[V]$ and $g \in \C[V^*]$, define
$$
\<f,g\> := \left.f\left(\frac{\partial}{\partial \y}\right)\cdot g(\y)\right|_{\y=0} =
\left.g\left(\frac{\partial}{\partial \x}\right)\cdot f(\x)\right|_{\x=0}.
$$

\begin{definition}
The \emph{inverse system of a homogeneous ideal} $I \in \C[V]$ is its orthogonal complement with respect to this pairing, which is easily seen to be
$$
I^\perp := \left\{g(\y)\in\C[V^*] \left| f\left(\frac{\partial}{\partial \y}\right)\cdot g(\y) = 0 \right.
\text{ for any } f(\x)\in I\right\}.
$$
The inverse system of the power ideal $I(\rho)$ is called the \emph{power inverse system} $C(\rho) = I(\rho)^{\perp}$.
\end{definition}
Since $I^\perp$ is the space of solutions of a system of homogeneous differential equations 
with constant coefficients, it is an inverse system.
The space $I^\perp$ is graded.
The dimension of the algebra $A= \C[V^*]/I$ equals $\dim I^\perp$. 
Moreover, the dimension of the $i$-th graded component $A_i$ of $A$ equals
the dimension of the $i$-th graded component $(I^\perp)_i$ of $I^\perp$.

\begin{proposition}\label{prop:dirdegree}
The power inverse system $C(\rho)$ consists of the polynomials $f(\y) \in \C[V^*]$ whose restriction to any affine line in direction $h \in V$ has degree at most $\rho(h)$.
\end{proposition}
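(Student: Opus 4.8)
The plan is to translate the condition ``$g(\partial/\partial\y)\cdot f(\y)=0$'' for $g = h^{\rho(h)+1}$ into a statement about the Taylor expansion of $f$ along the line through the origin in direction $h$, and then observe that closure of $C(\rho)$ under differentiation lets us drop the restriction that the line pass through the origin. First I would fix $h = \sum_i c_i x_i \in V\setminus\{0\}$ and compute that the differential operator $h(\partial/\partial\y) = \sum_i c_i\,\partial/\partial y_i$ is exactly the directional derivative $D_h$ in the direction $h$ (viewing $V$ as acting on $\C[V^*]$ by differentiation). Hence $h^{\rho(h)+1}(\partial/\partial\y) = D_h^{\rho(h)+1}$, and $f\in C(\rho)$ if and only if $D_h^{\rho(h)+1} f = 0$ for every $h\in V\setminus\{0\}$.

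Next I would fix a point $p\in V^*$ and expand: the one-variable polynomial $t\mapsto f(p+th)$ has $\frac{d^j}{dt^j}f(p+th) = (D_h^j f)(p+th)$, so its $j$-th Taylor coefficient at any point is governed by $D_h^j f$. Therefore $D_h^{\rho(h)+1}f \equiv 0$ on $V^*$ is equivalent to: for every $p$, the restriction $t\mapsto f(p+th)$ is a polynomial of degree at most $\rho(h)$. This is precisely the statement that the restriction of $f$ to every affine line in direction $h$ has degree at most $\rho(h)$. One direction of the proposition is now immediate: if $f\in C(\rho)$ then $D_h^{\rho(h)+1}f=0$, so every such restriction has degree $\le\rho(h)$.

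For the converse, suppose $f$ has the stated degree bound along all lines in all directions $h$. For a fixed $h$, the bound says $t\mapsto f(p+th)$ has degree $\le\rho(h)$ for all $p$, which (by the Taylor coefficient computation above, evaluated at $t=0$) gives $(D_h^{\rho(h)+1}f)(p) = 0$ for all $p$, i.e. $D_h^{\rho(h)+1}f = 0$ identically. Since this holds for every $h\in V\setminus\{0\}$ and $D_h^{\rho(h)+1} = h^{\rho(h)+1}(\partial/\partial\y)$, we get $g(\partial/\partial\y)f = 0$ for every generator $g = h^{\rho(h)+1}$ of $I(\rho)$, and hence for every element of $I(\rho)$; so $f\in I(\rho)^\perp = C(\rho)$.

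The only genuinely substantive point is the identification $h(\partial/\partial\y) = D_h$ together with the chain-rule identity $\frac{d^j}{dt^j}f(p+th) = (D_h^j f)(p+th)$; everything else is bookkeeping with the pairing and the definition of $C(\rho)$. I do not anticipate a real obstacle — one should just be slightly careful that $\rho$ is constant on $\C$-lines through the origin in $V$, so the phrase ``direction $h$'' is well defined, and that $C(\rho)$ is indeed finite-dimensional (guaranteed since the $h$'s span $V$, as already noted in the text).
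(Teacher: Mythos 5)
Your proof is correct and follows essentially the same route as the paper: identify $h(\partial/\partial\y)$ with the directional derivative $D_h$ and translate $D_h^{\rho(h)+1}f=0$ into the degree bound on restrictions of $f$ to affine lines in direction $h$. One small notational slip: the point $p$ should lie in $V$ (the space on which $f\in\C[V^*]$ is a polynomial function), not in $V^*$, but this does not affect the argument.
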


\begin{proof}
A polynomial $f(\y)\in\C[V^*]$ satisfies $h(\partial/\partial \y)^{\rho(h)+1} f(\y) = 0$ for a linear form $h\in V\setminus\{0\}$ if and only if the restriction $r(t) := f(\x + t\,h) \in \C[t]$ of the polynomial $f$ to any affine line of the form $L= \{\x+t\,h\mid t\in\C\}\subset V$ is a polynomial in $t$ of degree at most $\rho(h)$. 
\end{proof}

\begin{definition}
Given a polynomial $f \in \C[V^*]$, let the \emph{directional degree function of $f$} be $\rho_f: \P V \to \N$, defined by letting $\rho_f(h)$ be the degree of the restriction of $f$ to a generic line parallel to $h$. For a set of polynomials $S\subset \C[V^*]$ with finite dimensional linear span, define the {\it degree-span\/} $\ll S\rr$ as the unique minimal space $C(\rho)$ such that $C(\rho) \supseteq S$. 
\end{definition}

A polynomial $f \in \C[V^*]$ belongs to the degree-span $\ll S\rr$ if and only if,
for any affine line $L$ in direction $h$, the degree of
$f$ along $L$ is less then or equal to the largest
degree of a polynomial $g \in S$ along a line parallel to $L$.
In symbols, $\rho(h) = \max_{g \in S} \rho_g(h)$ and 
\[
\ll S \rr = \left\{ f \in \C[V^*] \left| \rho_f(h) \leq \max_{g \in S} \rho_g(h)\right.\right\}
\]
In particular, $\ll f \rr = C(\rho_f)$.
Since the space $C(\rho)$ is finite-dimensional, there is a finite collection
of polynomials $f_1,\dots,f_N$ such that $C(\rho) = \ll f_1,\dots,f_N\rr$. 


The following propositions list some properties of the spaces $C(\rho)$ orthogonal to power ideals, and of the space of directional degree functions.

\begin{proposition} 
\label{prop:C_properties}
Power inverse systems have the following properties:

{\rm 1.}
If $\ll S\rr = C(\rho_1)$
and $\ll T\rr = C(\rho_2)$,
then $\ll S \cup T \rr = C(\max(\rho_1,\rho_2))$ 
and $\ll S \cdot T \rr = C(\rho_1+\rho_2)$.

\smallskip

{\rm 2.}
For any power inverse system $C(\rho)$, 
and any $f\in C(\rho)$ we have that:

\begin{enumerate}
\item[(a)] Any partial derivative $\partial f/\partial x_i$ belongs to $C(\rho)$.
\item[(b)] Any shift $f(x+x_0)$, for fixed $x_0\in \C^n$, belongs to $C(\rho)$.
\item[(c)] Any polynomial that divides $f$ belongs to $C(\rho)$.
\item[(d)] If $f = f_0+f_1 + \dots + f_d$, where $f_i$ is a homogeneous polynomial
of degree $i$, then all $f_i$ belong to $C(\rho)$.
\end{enumerate}

\end{proposition}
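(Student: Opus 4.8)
Throughout I will work with the geometric description of $C(\rho)$ from Proposition~\ref{prop:dirdegree} — membership in $C(\rho)$ means having degree at most $\rho(h)$ along \emph{every} affine line in direction $h$ — together with the description of $\ll S\rr$ recorded after the definition of degree-span. Two elementary facts about the restriction of a polynomial $f\in\C[V^*]$ to the lines $L_{y_0,v}=\{y_0+tv:t\in\C\}$ of a fixed direction $v\in V\setminus\{0\}$ will do most of the work. \emph{(A)} If $f(y_0+tv)=\sum_j c_j(y_0)\,t^j$, the $c_j$ are polynomial functions of $y_0$; hence the degree of $f$ along $L_{y_0,v}$ is at most $\rho_f(v)$ for every $y_0$ and equals $\rho_f(v)$ for all $y_0$ outside a proper algebraic subset. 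In particular ``$f$ has degree $\le d$ along every line in direction $v$'' is equivalent to ``$\rho_f(v)\le d$''. \emph{(B)} Since $\C[t]$ is a domain, $\deg(fg)|_L=\deg f|_L+\deg g|_L$ along any line $L$ on which neither factor vanishes identically.

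For Part~1, since $\ll S\rr=C(\rho_1)$ and, by the degree-span description, $\ll S\rr=C\bigl(\max_{g\in S}\rho_g\bigr)$ with the latter function lying in $\Dir(V)$, we have $\rho_1(h)=\max_{g\in S}\rho_g(h)$ and likewise $\rho_2(h)=\max_{g\in T}\rho_g(h)$; so it suffices to show $\max_{g\in S\cup T}\rho_g=\max(\rho_1,\rho_2)$ and $\max_{f\in S,\,g\in T}\rho_{fg}=\rho_1+\rho_2$ pointwise. The first is immediate. For the second, the key point is the identity $\rho_{fg}=\rho_f+\rho_g$ for nonzero $f,g\in\C[V^*]$: by Facts (A) and (B), $\deg(fg)|_{L_{y_0,v}}\le\rho_f(v)+\rho_g(v)$ for every $y_0$, with equality for all $y_0$ in the dense open (hence nonempty) set of base points that simultaneously realize $\rho_f(v)$ and $\rho_g(v)$, so the generic degree of $fg$ in direction $v$ is exactly $\rho_f(v)+\rho_g(v)$. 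Taking the two maxima independently gives $\max_{f\in S,\,g\in T}\rho_{fg}(h)=\rho_1(h)+\rho_2(h)$, which is $\ll S\cdot T\rr=C(\rho_1+\rho_2)$.

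For Part~2, fix $f\in C(\rho)$. \emph{(b)} Translation by $a$ permutes the lines of each direction $v$, and the restriction of $f(y+a)$ to $L$ equals the restriction of $f$ to $L+a$, so the degree bound of Proposition~\ref{prop:dirdegree} is preserved; hence $f(y+a)\in C(\rho)$. \emph{(a)} Along $L_{y_0,v}$ one has $(\partial f/\partial y_i)(y_0+tv)=\left.\partial_s f(y_0+se_i+tv)\right|_{s=0}$, and for each fixed $s$ the polynomial $f(y_0+se_i+tv)$ has $t$-degree $\le\rho(v)$; differentiating in $s$ and setting $s=0$ cannot raise the $t$-degree, so $\partial f/\partial y_i\in C(\rho)$. \emph{(c)} Write $f=gh$ (we may assume $f\ne0$, so $g,h\ne0$); the identity just proved gives $\rho_g(v)\le\rho_g(v)+\rho_h(v)=\rho_f(v)\le\rho(v)$ for all $v$, so $g\in C(\rho)$ by Fact (A). \emph{(d)} This is immediate from the observation made before the proposition that $C(\rho)=I(\rho)^{\perp}$ is a \emph{graded} subspace of $\C[V^*]$, since a graded subspace contains every homogeneous component of each of its elements; alternatively, restricting the identity $f(\lambda(y_0+tv))=\sum_i\lambda^i f_i(y_0+tv)$ to lines through the origin and using the linear independence of the powers $\lambda^i$ shows that each $f_i(y_0+tv)$ has $t$-degree $\le\rho(v)$.

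The only step that requires genuine care is the product formula $\rho_{fg}=\rho_f+\rho_g$: one must check that a single line can be chosen generic enough both to realize $\rho_f(v)$ and $\rho_g(v)$ simultaneously and to avoid the vanishing loci of $f$ and $g$, so that Fact (B) applies and no cancellation of leading terms occurs. Everything else is a direct unwinding of Proposition~\ref{prop:dirdegree}, and this is also the ingredient behind the multiplicative constructions used later in the paper.
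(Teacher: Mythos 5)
Your proof is correct and takes essentially the same approach as the paper's, which argues directly from the degree-along-affine-lines characterization of Proposition~\ref{prop:dirdegree}. You simply fill in the details the paper elides (the paper says only ``a similar argument works'' for the product case, and ``trivial'' for 2(a)--(c)); in particular your explicit verification of the identity $\rho_{fg}=\rho_f+\rho_g$ via a common generic base point is exactly the content hidden behind the paper's terse remark.
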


\begin{proof}

1.  The degree of the restriction of any element in $S\cup T$ to an
affine line $L$ in the direction $a$ is less than or equal
to $\max(\rho_1(a),\rho_2(a))$.
Thus $\ll S \cup T \rr = C_{\rho_3}$, for some $\rho_3 \leq \max(\rho_1,\rho_2)$.
On the other hand, there is an element in $S\cup T$ whose degree of restriction to $L$
is exactly $\max(\rho_1(a),\rho_2(a))$.  Thus $\rho_3 = \max(\rho_1,\rho_2)$.
A similar argument works for $\ll S\cdot T\rr$.
 
2.  Statements (a),(b),(c) are trivial from the description of the space $C(\rho)$ in terms of degrees
of restrictions to affine lines.  
Statement (d) follows from the fact that $C(\rho)$ is a graded space.
\end{proof}

\begin{proposition} 
\label{prop:rho_properties}
Directional degree functions have the following properties:

{\rm 1.}
For $\rho \in \Dir(V) $, and any $a,b,a+b\in \C^n\setminus\{0\}$,
we have the triangle inequality $\rho(a) + \rho(b) \geq \rho(a+b)$.
\smallskip

{\rm 2.}
For $\rho_1,\rho_2\in \Dir(V)$, the functions $\rho_1 + \rho_2$
and $\max(\rho_1,\rho_2)$ belong to $\Dir(V)$.
In other words, $\Dir(V)$ is closed under the operations
``$+$'' and ``$\max$''.

\smallskip
 
{\rm 3.}
For any $\rho \in \Dir(V)$ and polynomials
$f_1,\dots,f_N$ such that
$\ll f_1,\dots,f_N\rr = C(\rho)$,
we have $\rho = \max(\rho_{f_1},\dots,\rho_{f_N})$.

\end{proposition}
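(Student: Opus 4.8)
The plan is to prove the three statements of Proposition~\ref{prop:rho_properties} in order, drawing on Propositions~\ref{prop:dirdegree}, \ref{prop:C_properties}, and the general theory of the degree-span developed above. Throughout I will freely use the characterization that $\rho_I = \rho_{I^\perp}$ is recovered from the power inverse system $C(\rho)$ via Proposition~\ref{prop:dirdegree}: $\rho(h)$ is the maximal degree, over all $f \in C(\rho)$, of the restriction of $f$ to a generic line in direction $h$.

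\smallskip
\noindent\textbf{Part 1 (triangle inequality).} First I would fix $\rho \in \Dir(V)$ and pick, for the given direction $a+b$, a polynomial $f \in C(\rho)$ whose restriction to a generic line in direction $a+b$ has degree exactly $\rho(a+b)$. The idea is to compose the line with a two-variable reparametrization: consider the restriction of $f$ to the plane $\{\,\y_0 + s\,a + t\,b : s,t \in \C\,\}$ for generic $\y_0$, obtaining a polynomial $F(s,t) = f(\y_0 + sa + tb)$. Since $f \in C(\rho)$, the degree of $F$ in $s$ alone (i.e.\ along direction $a$) is at most $\rho(a)$, and its degree in $t$ alone is at most $\rho(b)$, so the total degree of $F(s,t)$ is at most $\rho(a)+\rho(b)$. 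But setting $s=t$ recovers the restriction of $f$ to the line in direction $a+b$ through $\y_0$, which has degree $\rho(a+b)$ for generic $\y_0$; and the degree of $F(s,s)$ is bounded by the total degree of $F$. Hence $\rho(a+b) \le \rho(a) + \rho(b)$. The only subtlety is the genericity bookkeeping — that a single generic choice of $\y_0$ simultaneously witnesses the degree in direction $a+b$ — which follows because each of these degrees is the generic (maximum) value and the bad loci are proper Zariski-closed.

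\smallskip
\noindent\textbf{Part 2 (closure under $+$ and $\max$).} Here the cleanest route is via Proposition~\ref{prop:C_properties}(1). Given $\rho_1, \rho_2 \in \Dir(V)$, write $C(\rho_1) = \ll S \rr$ and $C(\rho_2) = \ll T \rr$ (for instance take $S$ and $T$ to be bases of these finite-dimensional spaces). By Proposition~\ref{prop:C_properties}(1), $\ll S \cup T \rr = C(\max(\rho_1,\rho_2))$ and $\ll S \cdot T \rr = C(\rho_1 + \rho_2)$; in particular both $\max(\rho_1,\rho_2)$ and $\rho_1+\rho_2$ are directional degree functions, since every space of the form $C(\rho')$ arising as a degree-span has $\rho'$ equal to the directional degree function of some power ideal. (Recall that $\Dir(V)$ was defined precisely as the set of directional degree functions, and the discussion after Definition~\ref{prop:dirdegree}'s neighboring definition shows the power ideals are in bijection with $\Dir(V)$ via $I \mapsto \rho_I$, with $C(\rho) = I(\rho)^\perp$ recovering $\rho$.) So this part is essentially a dictionary translation of the already-proved statement about degree-spans.

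\smallskip
\noindent\textbf{Part 3 ($\rho = \max(\rho_{f_1},\dots,\rho_{f_N})$).} Suppose $\ll f_1,\dots,f_N \rr = C(\rho)$. By the displayed formula for the degree-span immediately following the definition of $\ll S \rr$, we have $\ll f_1,\dots,f_N\rr = C(\rho')$ where $\rho'(h) = \max_i \rho_{f_i}(h)$. Since the correspondence $\rho \mapsto C(\rho)$ is injective on $\Dir(V)$ — indeed $\rho$ is recovered from $C(\rho)$ as the generic restriction-degree function by Proposition~\ref{prop:dirdegree} — we conclude $\rho = \rho' = \max(\rho_{f_1},\dots,\rho_{f_N})$.

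\smallskip
I expect Part~1 to be the only place requiring genuine argument; Parts~2 and~3 are bookkeeping on top of Proposition~\ref{prop:C_properties} and the injectivity of $\rho \mapsto C(\rho)$. The main obstacle in Part~1 is making the two-variable reparametrization argument airtight with respect to genericity: one must ensure that the plane through a generic point $\y_0$ simultaneously realizes the generic line-degrees in directions $a$, $b$, and $a+b$, and that restricting the total-degree bound on $F(s,t)$ to the diagonal $s=t$ is legitimate (it is, since total degree does not increase under substituting $s=t$). A slicker alternative avoiding genericity altogether is to invoke Part~2 together with Proposition~\ref{prop:C_properties}(1): the triangle inequality $\rho(a+b) \le \rho(a)+\rho(b)$ for $\rho \in \Dir(V)$ can be deduced from the fact that $I(\rho)$ contains $h^{\rho(h)+1}$ for all $h$, and in particular multiplying a relation witnessing $a$ and one witnessing $b$ inside the ideal shows $(a+b)$-powers appear early enough — but the reparametrization proof is the most transparent, so that is the one I would write up.
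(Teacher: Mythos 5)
Your argument is correct, and Parts~2 and~3 are essentially the same bookkeeping as the paper's (both are read off from Proposition~\ref{prop:C_properties}(1) and from the formula $\rho(h)=\max_{g\in S}\rho_g(h)$ characterizing the degree-span). Where you diverge is Part~1: you work entirely on the inverse-system side, restricting $f\in C(\rho)$ to a plane $\{\y_0+sa+tb\}$, bounding $\deg_s F\le\rho(a)$ and $\deg_t F\le\rho(b)$, and hence bounding $\deg F(s,s)\le\rho(a)+\rho(b)$. The paper instead works on the ideal side: since $a^{\rho(a)+1}$ and $b^{\rho(b)+1}$ lie in $I(\rho)$, the binomial expansion of $(a+b)^{\rho(a)+\rho(b)+1}$ has every term divisible by one of them, so $(a+b)^{\rho(a)+\rho(b)+1}\in I(\rho)$ and minimality of $\rho$ gives the inequality. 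Both are correct; the paper's version is shorter and avoids any appeal to genericity or to the existence of a polynomial $f\in C(\rho)$ realizing $\rho_f(a+b)=\rho(a+b)$, which your version uses but leaves unargued (it does hold, via the perfect pairing between graded pieces of $\C[V]/I(\rho)$ and $C(\rho)$, but it is a nontrivial step). You in fact sketch the paper's approach yourself as the ``slicker alternative'' at the end — I would promote that to the main proof; the two-variable restriction argument is a nice geometric picture but makes you pay for genericity bookkeeping that the binomial-theorem argument sidesteps entirely.
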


\begin{proof}
To prove the first statement, notice that since $a^{\rho(a)+1}$ and $b^{\rho(b)+1}$ are in $I(\rho)$, $(a+b)^{\rho(a)+\rho(b)+1}$ is also in $I(\rho)$ by the binomial theorem.
 The other two statements are immediate consequences of the first part of Proposition \ref{prop:C_properties}.
\end{proof}

According to Proposition~\ref{prop:rho_properties}.3, to describe all 
functions $\rho \in \Dir(V)$, it is enough to describe the functions $\rho_{f}$ for all homogeneous polynomials $f$. To do that, we will use the following lemma.

\begin{lemma}{\rm\cite{EI}}\label{lemma:degreeorder}
A homogeneous polynomial of degree $d$ in $\C[V^*]$ has degree $d'$ along direction $h$ if and only if it vanishes exactly to order $d - d'$ at $h$.
\end{lemma}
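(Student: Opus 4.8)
The plan is to reduce the statement to a clean linear-algebra computation by exploiting the homogeneity of $f$ together with the pairing between $\C[V]$ and $\C[V^*]$. Concretely, fix a linear form $h \in V \setminus \{0\}$ and consider the restriction $r(t) = f(\y_0 + t\,h^*)$ of $f$ to a generic affine line in direction (dual to) $h$; since $f$ is homogeneous of degree $d$, Euler-type reasoning shows that the coefficient of $t^{d-j}$ in $r(t)$, as $\y_0$ varies, records exactly the $j$-th order Taylor expansion of $f$ at the point $h^* \in \P V^*$ — equivalently, the behavior of $h(\partial/\partial \y)^{j}$ applied to $f$. So ``degree of the restriction is $d'$'' translates into ``$h(\partial/\partial\y)^{j} f \neq 0$ for $j = d - d'$ but vanishes for $j = d - d' + 1$'', and I want to match this with the order of vanishing of $f$ at $h$.

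The key steps, in order, are as follows. First I would set up coordinates so that $h = x_1$ (possible after a linear change of variables, which affects neither side of the claimed equivalence), so that $h(\partial/\partial\y) = \partial/\partial y_1$. Second, I would expand the homogeneous polynomial $f$ in powers of $y_1$: write $f = \sum_{i} y_1^{i} g_i(y_2,\dots,y_n)$ where $g_i$ is homogeneous of degree $d - i$. Then $\partial^{m}/\partial y_1^{m}$ kills $f$ identically iff $g_i = 0$ for all $i \geq m$; the largest $i$ with $g_i \neq 0$ is exactly the degree of $f$ along the $x_1$-direction by Proposition~\ref{prop:dirdegree} (the restriction to the line $\y = (t, c_2,\dots,c_n)$ is $\sum_i t^i g_i(c_2,\dots,c_n)$, which has degree equal to that top index for generic $c$). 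Third, I would compute the order of vanishing of $f$ at the point $h = x_1$, i.e. at the point of $\P V^*$ with coordinates $(1,0,\dots,0)$: dehomogenizing, the local behavior near that point is governed by the lowest-degree term in $y_2,\dots,y_n$ appearing in $f/y_1^{\deg}$, and a short bookkeeping check shows the order of vanishing is $d$ minus the top index $i$ with $g_i \neq 0$. Combining the second and third steps gives: (order of vanishing at $h$) $= d - (\text{degree along } h) = d - d'$, which is the assertion.

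I should double-check the precise identification of ``vanishing to order $d - d'$ at $h$'': here $h$ is a point of $\P V$ but $f \in \C[V^*]$, so ``$f$ vanishes at $h$'' must mean via the identification of $h$ with a point of $\P V^*$ under the chosen bases, or equivalently that $f$ lies in the appropriate power of the maximal ideal of that point after dehomogenizing. The cleanest way to make this rigorous is to use the pairing directly: vanishing of $f$ to order $e$ at the point corresponding to $h$ is equivalent to $\<p, f\> = 0$ for every $p \in \C[V]$ of the form $p = q \cdot (\text{something}) $... — more efficiently, it is equivalent to $h(\partial/\partial\y)^{e'} f$ being a nonzero multiple of... Actually the slickest route, and the one I would write up, is simply to invoke the coordinate computation above, since in the $h = x_1$ coordinates both ``order of vanishing at $(1,0,\dots,0)$'' and ``degree along $x_1$'' are read off transparently from the expansion $f = \sum_i y_1^i g_i$.

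The main obstacle is purely bookkeeping: pinning down exactly what ``vanishes to order $d - d'$ at $h$'' means for a homogeneous polynomial (working projectively versus affinely, and the identification of $h \in \P V$ with a point in $\P V^*$) and confirming the off-by-nothing in the index so that the complementary degrees $d'$ and $d - d'$ line up correctly. Once the coordinate normalization $h = x_1$ is in place and $f$ is written as $\sum_i y_1^i g_i(y_2,\dots,y_n)$, there is essentially nothing left; the content of the lemma is the observation that expanding $f$ in powers of $y_1$ simultaneously computes the directional degree (top nonzero index) and the order of vanishing at the corresponding point (degree $d$ minus that index). I would cite \cite{EI} for the original statement and give this short self-contained argument.
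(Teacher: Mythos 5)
Your argument is correct and amounts to the same idea as the paper's: the paper Taylor-expands $f(th+a) = t^d f(h + a/t)$ around $h$ and reads off that the coefficient of $t^{d-m}$ collects the order-$m$ derivatives of $f$ at $h$, which is exactly your expansion $f = \sum_i y_1^i g_i$ after normalizing $h = x_1$. The coordinate normalization is a minor stylistic simplification of the same bookkeeping; the identification of $h \in \P V$ with an evaluation point for $f \in \C[V^*]$ that you flag is made (implicitly, via coordinates) in the paper as well.
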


\begin{proof}
Let $f$ be the polynomial. Consider $a \in V$ and $t \in \C$. We have that 
$$
f(th+a) =  
\sum_{k_1,\dots,k_n \geq 0}
\left.
\frac{a_1^{k_1}}{k_1!}\cdots \frac{a_n^{k_n}}{k_n!}
\left(\frac{\partial}{\partial x_1}\right)^{k_1} \cdots 
\left(\frac{\partial}{\partial x_n}\right)^{k_n} f(x)\right|_{x=h} t^{d-(k_1+\cdots+k_n)}.
$$
The terms of $t$-degree greater than $d'$ cancel if and only if all the derivatives of $f$ of order less than $d-d'$ vanish at $h$.
\end{proof}

\begin{proposition}
\label{prop:flag_f} 
For each homogeneous polynomial $f \in \C[V^*]$ of degree $d$ there is a flag 
$\emptyset = X_{-1}\subset X_0 \subset X_1 \subset \cdots \subset X_d = \P V \simeq \CP^{n-1}$
of projective algebraic sets such that $\rho(a) = i$ for $a\in X_i\setminus X_{i-1}$.

The algebraic set $X_{i}$ is the set of common zeros of 
$(\dd 1)^{k_1} \cdots (\dd n)^{k_n}f(x)$ for $k_1+\cdots + k_n\leq d-i-1$.
%
\end{proposition}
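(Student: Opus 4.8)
The plan is to build the flag directly from Lemma~\ref{lemma:degreeorder} and Proposition~\ref{prop:dirdegree}, taking the sets $X_i$ exactly as prescribed in the statement and then checking that they have the required properties. Concretely, for $-1 \le i \le d$ define
\[
X_i := \left\{ a \in \P V : \left((\dd 1)^{k_1}\cdots(\dd n)^{k_n} f\right)(a) = 0 \text{ for all } k_1+\cdots+k_n \le d-i-1 \right\}.
\]
Each $X_i$ is a projective algebraic set since $f$ is homogeneous, so all the iterated partials are homogeneous (possibly zero) and their common vanishing locus is well defined in $\P V$. The condition imposed at stage $i$ is a subset of the condition imposed at stage $i-1$ (we require more derivatives to vanish as $i$ decreases), so $X_d \subseteq X_{d-1} \subseteq \cdots \subseteq X_0 \subseteq X_{-1}$ — wait, the inclusions run the other way: smaller $i$ forces \emph{more} vanishing, hence $X_{-1} \supseteq X_0 \supseteq \cdots$. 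To get the flag as written, I would instead observe that requiring vanishing of partials up to order $d-i-1$ is \emph{weaker} for larger $i$ (the bound $d-i-1$ decreases), so $X_{i} \supseteq X_{i-1}$, giving $\emptyset = X_{-1} \subseteq X_0 \subseteq \cdots \subseteq X_d$. The endpoint $X_d$ is all of $\P V$ because the condition $k_1+\cdots+k_n \le d-d-1 = -1$ is vacuous. The endpoint $X_{-1} = \emptyset$ because it would force all partials of order $\le d$ to vanish at $a$, in particular $f$ itself (order $0$) and all its top-order constant partials; since $f \ne 0$ some degree-$0$ partial is a nonzero constant, which never vanishes, so $X_{-1} = \emptyset$.

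The heart of the argument is the identity $\rho_f(a) = i$ for $a \in X_i \setminus X_{i-1}$, and this is a direct translation of Lemma~\ref{lemma:degreeorder}. By that lemma, if $f$ has degree $d$ then its degree along the direction $a$ equals $d - m$, where $m = \operatorname{ord}_a f$ is the exact vanishing order of $f$ at $a$, i.e. the largest $m$ such that every partial derivative of $f$ of order $< m$ vanishes at $a$. Now $a \in X_i$ exactly says all partials of order $\le d-i-1$ vanish at $a$, i.e. $\operatorname{ord}_a f \ge d-i$; and $a \notin X_{i-1}$ says it is not the case that all partials of order $\le d-i$ vanish, i.e. $\operatorname{ord}_a f \le d-i$. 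Together these give $\operatorname{ord}_a f = d-i$, hence by Lemma~\ref{lemma:degreeorder} the degree of $f$ along $a$ is $d - (d-i) = i$, which is precisely $\rho_f(a) = i$. (Here I should note that $\rho_f(a)$ as defined via ``a generic line parallel to $a$'' agrees with the degree of the restriction to \emph{any} affine line in direction $a$ in the homogeneous case, or more carefully: the generic degree is the maximum over lines, and Lemma~\ref{lemma:degreeorder} computes exactly that maximum in terms of the vanishing order at the single point $a \in \P V$.)

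The one subtlety worth checking carefully — and I expect this to be the main obstacle — is the precise relationship between $\rho_f(a)$ defined by the generic line and the quantity ``degree along direction $h$'' used in Lemma~\ref{lemma:degreeorder}, together with making sure the flag is strictly a flag of the right length and that no degeneracies collapse it incorrectly. The expansion in the proof of Lemma~\ref{lemma:degreeorder} shows that for \emph{every} $a' \in V$ and the line $\{a' + t a\}$, the coefficient of $t^{d-j}$ is $\sum_{|k|=j}\frac{(a')^k}{k!}(\partial^k f)(a)$ up to the usual abuse (the base point is $a \in V$, the direction is $a'$); the degree is $d - j_0$ where $j_0$ is minimal with this coefficient nonzero for generic $a'$, and genericity lets us detect nonvanishing of the \emph{whole} vector of partials $(\partial^k f)(a)$, $|k| = j$, rather than a single one. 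So the generic degree along $a$ equals $d - \operatorname{ord}_a f$, matching the above. Once this is pinned down, the remaining assertions — that each $X_i$ is algebraic, that the inclusions hold, that $X_{-1} = \emptyset$ and $X_d = \P V$, and that $\bigcup_i (X_i \setminus X_{i-1})$ partitions $\P V$ with $\rho_f$ constant equal to $i$ on the $i$-th piece — are all immediate, and the proposition follows.
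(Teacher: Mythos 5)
Your proposal is correct and takes the same route as the paper, which dismisses this as a straightforward consequence of Lemma~\ref{lemma:degreeorder}: you simply unwind the lemma, translating ``$a\in X_i\setminus X_{i-1}$'' into ``$f$ vanishes to order exactly $d-i$ at $a$'' and then into ``$\rho_f(a)=i$,'' and check the endpoints $X_{-1}=\emptyset$, $X_d=\P V$. The one small blemish is the momentary reversal of the inclusions, which you correct mid-paragraph; otherwise the details (algebraicity, nesting, the generic-line reading of $\rho_f$) are all handled properly.
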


\begin{proof}
This is a straightforward consequence of Lemma \ref{lemma:degreeorder}.
\end{proof}

%

\begin{proposition}
\label{prop:flag_any_rho}
For each directional degree function $\rho\in \Dir(V)$ there is a flag of projective algebraic sets 
$\emptyset = X_{-1}\subset X_0 \subset X_1 \subset \cdots \subset X_d = \P V \simeq \CP^{n-1}$
such that $\rho(a) = i$ for $a\in X_i\setminus X_{i-1}$.

If $C(\rho) =\ll f_1,\dots,f_N\rr$ where $f_j$
is homogeneous of degree $d_j$ for $1 \leq j \leq N$,
then $d=\max(d_1,\dots,d_N)$ and $X_{i}$ is the set of common zeros
of all derivatives of the form $(\dd 1)^{k_1} \cdots (\dd n)^{k_n}f_j(x)$ with $1 \leq j \leq N$ and $k_1 + \cdots + k_n \leq d_j - i -1$.

In particular, if $d=d_1=\dots = d_M>d_{M+1}\geq \dots\geq d_N$,
then $X_{d-1}$ is the set of common zeros of the polynomials $f_1, \dots, f_M$.
\end{proposition}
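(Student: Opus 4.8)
The plan is to deduce Proposition~\ref{prop:flag_any_rho} directly from Proposition~\ref{prop:flag_f} together with the multiplicative/max behavior of directional degree functions recorded in Propositions~\ref{prop:C_properties} and~\ref{prop:rho_properties}. The starting point is the hypothesis $C(\rho) = \ll f_1,\dots,f_N\rr$, which by Proposition~\ref{prop:rho_properties}.3 gives $\rho = \max(\rho_{f_1},\dots,\rho_{f_N})$. For each $f_j$, homogeneous of degree $d_j$, Lemma~\ref{lemma:degreeorder} identifies $\rho_{f_j}(a)$ with $d_j$ minus the order of vanishing of $f_j$ at $a$; equivalently, by the computation in the proof of Proposition~\ref{prop:flag_f}, $\rho_{f_j}(a) \leq i$ iff every derivative $(\dd 1)^{k_1}\cdots(\dd n)^{k_n} f_j(x)$ with $k_1+\cdots+k_n \leq d_j - i - 1$ vanishes at $a$.

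First I would set $d = \max(d_1,\dots,d_N)$ and define $X_i$ to be the common zero set of all the derivatives $(\dd 1)^{k_1}\cdots(\dd n)^{k_n} f_j(x)$ over all $1 \leq j \leq N$ and all $k_1+\cdots+k_n \leq d_j - i - 1$ (so $X_i = \P V$ for $i \geq d$, and for $i = d$ only those $f_j$ with $d_j = d$ contribute a nontrivial condition, namely $f_j$ itself). The containments $X_{-1} \subset X_0 \subset \cdots \subset X_d$ are automatic: increasing $i$ only shrinks the set of derivative-exponents involved, so it enlarges (or keeps) the zero set. Note $X_{-1} = \emptyset$ because for $i = -1$ the condition includes the constant $k_1 = \cdots = k_n = 0$ term, i.e. $f_j$ itself, for every $j$, and one cannot have all $f_j$ vanish identically since $\ll f_1,\dots,f_N\rr$ is nonzero; more carefully, $X_{-1}$ would force $\rho(a) \le -1$ somewhere, impossible.

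Next I would verify $\rho(a) = i$ precisely for $a \in X_i \setminus X_{i-1}$. By the reformulation of Lemma~\ref{lemma:degreeorder} above, $a \in X_i$ iff $\rho_{f_j}(a) \leq i$ for all $j$, iff $\max_j \rho_{f_j}(a) \leq i$, iff $\rho(a) \leq i$. Hence $X_i = \{a : \rho(a) \leq i\}$, and $X_i \setminus X_{i-1} = \{a : \rho(a) = i\}$, which is exactly the claim; this also re-confirms $d = \max_j d_j$ equals $\max_a \rho(a)$ and that the flag terminates at $X_d = \P V$. The final ``in particular'' clause is then immediate: when $d = d_1 = \cdots = d_M > d_{M+1} \geq \cdots \geq d_N$, for $i = d - 1$ the exponent bound $d_j - i - 1 = d_j - d$ is $0$ for $j \leq M$ (giving the condition $f_j(a) = 0$) and is negative for $j > M$ (giving no condition), so $X_{d-1}$ is cut out exactly by $f_1,\dots,f_M$.

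The only real subtlety — and the step I would be most careful about — is the passage ``$\rho = \max_j \rho_{f_j}$ implies the zero-set description is the one obtained by pooling the derivative conditions for all $j$.'' This relies on the fact that the pointwise max of the $\rho_{f_j}$ is computed honestly from the order-of-vanishing data of each $f_j$ separately, with no interaction between the generators; Lemma~\ref{lemma:degreeorder} and the explicit formula $f_j(th+a) = \sum \cdots$ from the proof of Proposition~\ref{prop:flag_f} make this transparent, so there should be no genuine obstacle, only bookkeeping with the degree offsets $d_j - i - 1$.
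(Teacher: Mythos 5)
Your proposal is correct and follows essentially the same approach as the paper: write $\rho = \max_j \rho_{f_j}$ via Proposition~\ref{prop:rho_properties}.3, apply the single-polynomial flag of Proposition~\ref{prop:flag_f} (equivalently Lemma~\ref{lemma:degreeorder}) to each $f_j$, and take $X_i$ to be the intersection of the resulting sublevel sets. One small slip to fix: the parenthetical about ``for $i=d$ only those $f_j$ with $d_j=d$ contribute a nontrivial condition'' should refer to $i=d-1$, since for $i=d$ the bound $d_j-d-1$ is negative for every $j$ and no conditions are imposed at all, which is in fact what gives $X_d=\P V$ and is consistent with the rest of your argument.
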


\begin{proof}
According to Proposition~\ref{prop:rho_properties}, we have
$\rho = \max(\rho_{f_1},\dots,\rho_{f_N})$.
Suppose that the polynomial $f_i$ produces the flag 
$X_{-1}^i\subset X_0^i \subset X_1^i \subset X_2^i \subset \cdots$,
as in Proposition~\ref{prop:flag_f}.
Then $\rho$ corresponds to the flag $X_{-1}\subset X_0 \subset X_1 \subset X_2 \subset \cdots$,
where $X_j = X_j^1\cap \cdots \cap X_j^N$.
\end{proof}

Clearly, $\rho(h) = d$ for a generic point $h\in \P V$.
Define the {\it characteristic variety\/} $X=X(\rho)$ of the 
power ideal $I(\rho)$ 
as the locus of points $h\in\P V$ where $\rho(h)<d$.
Any projective variety is the characteristic variety of some power ideal.
%

\begin{remark}
Proposition~\ref{prop:flag_any_rho} shows 
that the structure of an arbitrary power ideal $I_\rho$ 
is at least as complicated as the structure of an arbitrary projective variety.
\end{remark}

\section{The power ideal of a homogeneous polynomial}\label{sec:poly}

Let $f$ be a homogeneous polynomial $f \in \C[V^*]$ and let $X=\{\x \in V \, | \, f(\x) = 0\}$ be the corresponding hypersurface in $V$. We defined the directional degree function $\rho_f:\P V \to \N$ of $f$ by letting $\rho_f(h)$ be the degree of $f$ on a generic line in direction $h \in V$. To $\rho_f$ we also associate a power ideal $I(\rho_f)$ whose characteristic variety is $X$. 

More generally, let $f_1, \ldots, f_N$ be degree $d$ polynomials in $\C[V^*]$ and consider the algebraic set $X=\{\x \in V \, | \, f_i(\x) = 0 \textrm{ for } 1 \leq i \leq N\}$. The directional degree function $\rho(h) = \max_{1 \leq i \leq N} \rho_{f_i}(h)$ defines a power ideal $I(\rho_{f_1}, \dots, \rho_{f_N})$ whose characteristic variety is $X$.
%

The following result tells us that $C(\rho_f)$ can detect the smoothness of the hypersurface $f(\x)=0$.

\begin{proposition}\label{prop:smooth}
Let $f\in\C[V^*]$ be a homogeneous polynomial of degree $d$, and let 
$X = \{\x\, |\, f(\x)=0\}\subset \P V \simeq \CP^{n-1}$ be the corresponding hypersurface.
The Hilbert series of the inverse system $C(\rho_f)$ is
$$
\Hilb (C(\rho_f);q) 
= \left(\sum_{i=0}^{d-1} {n+i-1\choose i} q^i\right) + q^d.
$$
if and only if $X$ is smooth.
\end{proposition}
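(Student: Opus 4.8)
The plan is to analyze the Hilbert series of $C(\rho_f)$ degree by degree, using the flag structure from Proposition~\ref{prop:flag_f}. Recall that $C(\rho_f)$ consists of all polynomials $g(\y)$ whose restriction to a generic line in direction $h$ has degree at most $\rho_f(h)$, where $\rho_f(h)$ is the degree of $f$ along a generic line in direction $h$. For a generic $h$, $\rho_f(h) = d$, so the given candidate series $\left(\sum_{i=0}^{d-1}\binom{n+i-1}{i}q^i\right) + q^d$ is exactly the Hilbert series of $\{g : \rho_g(h) \leq d \text{ for all } h, \text{ and } \rho_g(h) \leq d-1 \text{ for all } h \in X\}$ in the case where the part of $C(\rho_f)$ in degree $<d$ is \emph{everything} (all of $\C[V^*]_i$ for $i<d$), and the part in degree $\geq d$ is just $\C \cdot f$ itself. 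So the content of the statement is: (i) $C(\rho_f)$ always contains all polynomials of degree $< d$ and nothing of degree $> d$ (the degree $>d$ claim is clear since $\rho_f(h) \le d$ everywhere, and the degree $<d$ claim also holds since $\rho_f(h)\le d$); (ii) the degree-$d$ part of $C(\rho_f)$ has dimension exactly $1$ if and only if $X$ is smooth.

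So the crux is to compute $\dim (C(\rho_f))_d$. By Lemma~\ref{lemma:degreeorder} and Proposition~\ref{prop:flag_f}, a homogeneous polynomial $g$ of degree $d$ lies in $C(\rho_f)$ if and only if, for every $h \in \P V$, the degree of $g$ along a line in direction $h$ is at most $\rho_f(h)$; by Lemma~\ref{lemma:degreeorder} this means $g$ vanishes at $h$ to order at least $d - \rho_f(h)$. Equivalently, $\rho_g(h) \le \rho_f(h)$ for all $h$, i.e. at every point $h$ of the hypersurface $X = \{f = 0\}$ (where $\rho_f(h) = d - \mathrm{ord}_h f \le d-1$), the polynomial $g$ must vanish to order at least $d - \rho_f(h) = \mathrm{ord}_h f$. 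In other words, the degree-$d$ component of $C(\rho_f)$ is precisely the vector space of degree-$d$ homogeneous $g$ with $\mathrm{ord}_h g \ge \mathrm{ord}_h f$ at every point $h$ of $X$. I would phrase this as: $g \in (C(\rho_f))_d$ iff for all multi-indices $(k_1,\dots,k_n)$ and all $h\in X$ with $k_1 + \cdots + k_n \le \mathrm{ord}_h f - 1$, the derivative $(\partial/\partial x_1)^{k_1}\cdots(\partial/\partial x_n)^{k_n} g$ vanishes at $h$ — equivalently, the derivatives of $g$ are controlled by the vanishing locus of the derivatives of $f$.

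The key step is then to show: the space of such $g$ equals $\C\cdot f$ exactly when $X$ is smooth. One direction is easy: $f$ itself clearly satisfies the vanishing conditions ($\mathrm{ord}_h f \ge \mathrm{ord}_h f$), so $\C\cdot f \subseteq (C(\rho_f))_d$ always. For the reverse, suppose $X$ is smooth. A smooth hypersurface of degree $d$ is reduced and irreducible (if $d \ge 1$; the degenerate cases $d=0,1$ should be checked separately and are trivial), and at every point $h \in X$ we have $\mathrm{ord}_h f = 1$. So the condition on $g$ becomes simply: $g$ vanishes on $X$. Since $X$ is a reduced irreducible hypersurface with defining polynomial $f$ (up to scalar — here I use that $f$ is squarefree, which follows from smoothness via the Jacobian criterion), any homogeneous $g$ of degree $d$ vanishing on $X$ is a scalar multiple of $f$ by the Nullstellensatz. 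Conversely, if $X$ is singular at some point $h$, then $f$ is either non-reduced, or reduced but with $\mathrm{ord}_h f \ge 2$ at $h$; I want to exhibit a $g \notin \C f$ with $\mathrm{ord}_{h'} g \ge \mathrm{ord}_{h'} f$ for all $h' \in X$. If $f = \prod f_j^{m_j}$ with some $m_j \ge 2$, take $g$ to be $f$ with one exponent $m_j$ decreased — no wait, that lowers the degree; instead replace $f_j^{m_j}$ by $f_j^{m_j-1}\cdot \ell$ for a generic linear form $\ell$, which has the same degree and the same or larger order of vanishing at every point of $X$ (the order can only drop where $f_j$ vanishes, and there it drops by at most $1$ but $f_j^{m_j-1}$ with $m_j - 1 \ge 1$ keeps order $\ge m_j - 1$; need to arrange so the total order stays $\ge m_j$ — this needs care, e.g. pick $\ell$ through the singular locus, or use that generically the singular locus forces extra vanishing). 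If $f$ is reduced but singular at $h$, then $\nabla f(h) = 0$; here I would argue that $(C(\rho_f))_d$ contains more than $\C f$ by a dimension count or by directly perturbing: the tangent-cone data at $h$ gives room. I expect this last direction — producing the extra element of $(C(\rho_f))_d$ when $X$ is singular — to be the main obstacle, and I would look for the cleanest formulation, likely via the characterization that $(C(\rho_f))_d = \{g : \text{the flag } X_\bullet(g) \text{ refines } X_\bullet(f)\}$ and a local analysis at a singular point showing this space is strictly larger than $\C f$; alternatively one can invoke known apolarity results (Emsalem–Iarrobino) identifying $(C(\rho_f))_d$ with (the degree-$d$ part of) an inverse system that detects singularities.
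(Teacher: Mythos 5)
Your proof has a genuine gap, and it is located exactly where you asserted there was nothing to check. You claim in step (i) that ``$C(\rho_f)$ always contains all polynomials of degree $<d$\dots since $\rho_f(h)\le d$.'' This is a non sequitur: the bound $\rho_f(h)\le d$ is an \emph{upper} bound on $\rho_f$, but to conclude that a degree-$(d-1)$ polynomial $g$ lies in $C(\rho_f)$ you need the \emph{lower} bound $\rho_f(h)\ge d-1$ for every $h$, which by Lemma~\ref{lemma:degreeorder} is precisely the statement that $f$ vanishes to order at most $1$ everywhere, i.e.\ that $X$ is smooth. So your (i) is not a free observation; it is equivalent to (one direction of) the thing being proved, and it is \emph{false} when $X$ is singular: at a singular point $h$ we have $\rho_f(h)\le d-2$, so any degree-$(d-1)$ polynomial not vanishing at $h$ is excluded from $C(\rho_f)$.

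Consequently your reduction to statement (ii) is also wrong, and (ii) itself is false. The degree-$d$ piece of $C(\rho_f)$ does not detect singularity. For instance, in the cuspidal cubic $f=x_1^3-x_2^2x_3$ the paper computes $\Hilb(C(\rho_f);q)=1+3q+5q^2+q^3$: the top coefficient is still $1$, and the drop (from $6$ to $5$) occurs in degree $d-1=2$. The same holds for $f=x_1^d$, where $C(\rho_f)=\span(1,y_1,\dots,y_1^d)$, again with a $1$-dimensional degree-$d$ piece. So the hard direction you were wrestling with --- manufacturing an extra degree-$d$ element of $C(\rho_f)$ when $X$ is singular --- is a dead end. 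The correct (and much simpler) argument for the ``only if'' direction is the one the paper gives: if $X$ is singular, pick $h$ with $\mathrm{ord}_h f\ge 2$; then $\rho_f(h)\le d-2$, so every $g\in C(\rho_f)_{d-1}$ must vanish at $h$, whence $\dim C(\rho_f)_{d-1}\le \binom{n+d-2}{d-1}-1$, and the Hilbert series cannot be the stated one. Your ``if'' direction (smooth $X$ forces the series) is essentially the same as the paper's and is fine once you observe that smoothness gives $\rho_f\ge d-1$ everywhere, which is the step (i) that you tried to get for free.
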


\begin{proof}
First assume that $X$ is smooth. By Lemma \ref{lemma:degreeorder}, $\rho_f(x)$ is equal to $d-1$ for $x \in X$ and is equal to $d$ elsewhere. The polynomials $g \in C(\rho_f)$ are those whose restrictions to lines $X$ have degree at most $d-1$ and whose restrictions to other lines have degree at most $d$. Any polynomial of degree $d-1$ satisfies these conditions, and no polynomial of degree greater than $d$ satisfies them. A polynomial $g$ of degree $d$ which satisfies them must vanish at $X$, using 
Lemma \ref{lemma:degreeorder} again; therefore it must be a constant multiple of $f$. The desired result follows.

Now assume that $X$ is not smooth. Then $f$ vanishes at some point $h$ to order at least $2$, and hence has degree at most $d-2$ along that direction. It follows that $h^{d-1}$ is not in $C(\rho_f)$, which means that $ \dim C(\rho_f)_{d-1} < {n+d-2 \choose d-1}$.
\end{proof}

We now investigate the power ideal of a homogeneous polynomial in two cases: elliptic curves and hyperplane arrangements.

\subsection{A case study: Elliptic curves}

In this section we consider the power ideals determined by curves in the projective plane $\CP^2$ defined by 
an equation $f(x_1,x_2,x_3)= x_1^3 + ax_1 x_3^2 + bx_3^3 - x_2^2x_3=0$ where $a, b\in \C$ are two fixed constants. 
Such a curve $X$ can be parametrized as 
$$
X=\{(t:\pm r(t):1)\mid t\in\C\}\cup \{(0:1:0)\},
$$
where $r(t) := \sqrt{t^3+at+b}$.

%

The characteristic variety of the power ideal $I(\rho_f)\in \C[x_1,x_2,x_3]$ is the curve $X$.
To describe this power ideal $I(\rho_f)$, we need to consider three cases, shown in Figure \ref{fig:elliptic} in the real case. Generically, $X$ is non-singular, and it is called an elliptic curve. When $(a/3)^3 + (b/2)^2 = 0$ it has a double root, and when $a=b=0$ it has a cusp. 

\begin{figure}[htbp]
\begin{center}
\includegraphics[width=3.5in]{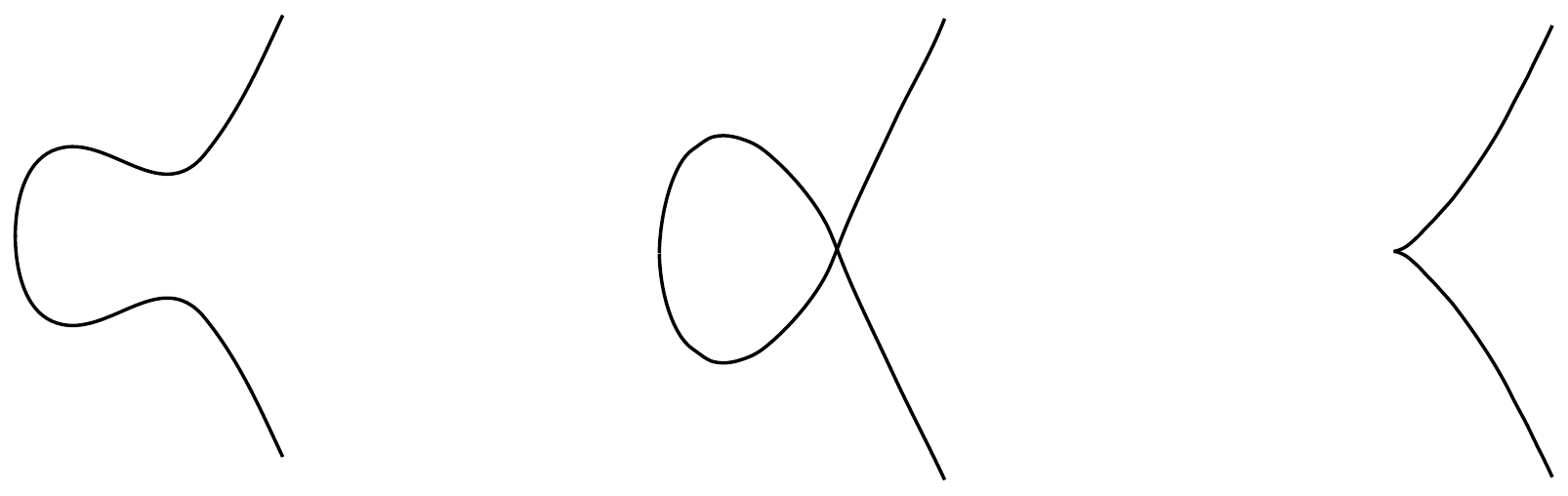} 
\caption{The three possibilities for the curve $x_1^3 + ax_1 x_3^2 + bx_3^3 - x_2^2x_3=0$: an elliptic curve, a curve with a double root, and a curve with a cusp.}
\label{fig:elliptic}
\end{center}
\end{figure}

\medskip
(I)  $(a/3)^3 + (b/2)^2 \ne 0$.  In this case the elliptic curve $X$ has no singular points, and 
we have
$$
\rho_f(h) = \left\{
\begin{array}{cl}
3 &\textrm{if } h\not\in X,\\
2 &\textrm{if } h\in X.\\
\end{array}\right.
$$

The ideal $I(\rho_f)\in\C[x_1,x_2,x_3]$ is generated 
by the powers $(v_1 x_1+v_2x_2+v_3x_3)^3$ for $(v_1:v_2:v_3)\in X$, and by all monomials $x_1^ix_2^jx_3^k$ of degree 4.
In other words, $I(\rho_f)$ is generated by $x_2^3$, $(tx_1\pm r(t) x_2 + x_3)^3$, for any $t\in \C$, 
and by all monomials of degree 4.
We have
\begin{eqnarray*}
& &(tx_1\pm r(t) x_2 + x_3)^3 \\
&=&(tx_1+x_3)^3 + 3r(t)^2(t x_1 + x_3)x_2^2 \pm 3r(t)(tx_1+x_3)^2x_2 
\pm r(t)^3 x_2^3 \\
&=&(x_3^3 + 3bx_2^2 x_3) + 3(x_1x_3^2 + b x_1x_2^2 + a x_2^2 x_3)t +
3(x_1^2 x_3 + ax_1x_2^2)t^2\\
& &  + (x_1^3 + 3x_2^2x_3)t^3 + 3x_1x_2^2t^4
\pm (3x_2x_3^2 + bx_2^3)r(t)
\pm (6x_1x_2x_3+ax_2^3)r(t)t\\
& &\pm 3x_1^2x_2r(t)t^2
\pm x_2^3r(t)t^3
%
\end{eqnarray*}

Since $1,t,t^2,t^3,t^4,r(t),r(t)t,r(t)t^2,$ and $r(t)t^3$ are linearly independent functions,

\begin{eqnarray*}
I(\rho_f) &=&\left<x_3^3 + 3bx_2^2 x_3, \ x_1x_3^2 + b x_1x_2^2 + a x_2^2 x_3, \
x_1^2 x_3 + ax_1x_2^2, \ (x_1^3 + 3x_2^2x_3)\right.\\
& & \left. x_1x_2^2, \ (3x_2x_3^2 + bx_2^3),\ 
(6x_1x_2x_3+ax_2^3), \ x_1^2x_2, x_2^3,  \ x_1^ix_2^jx_3^k\mid i+j+k=4\right> \\
&=&\left<
x_2^3,\ 
x_1x_2^2,\ 
x_2x_3^2,\ 
x_1x_2x_3,\ 
x_1^2x_2,\
x_3^3 + 3bx_2^2 x_3,\
x_1^2 x_3,\ 
\right.\\
& &
\left. 
x_1x_3^2 + a x_2^2 x_3,\
x_1^3 + 3x_2^2x_3,\
 x_1^ix_2^jx_3^k\mid i+j+k=4\right>.
\end{eqnarray*}


The space $C(\rho_f)$ is spanned by all polynomials in $\C[y_1,y_2,y_3]$ of degree at most $2$
and by the polynomial $f(y_1,y_2,y_3)$.
Thus $\Hilb(C(\rho_f); q) = 1 + 3q + 6q^2 + q^3$. This agrees with Proposition \ref{prop:smooth} since $X$ is smooth in this case.

\medskip
(II)  $(a/3)^3 + (b/2)^2 = 0$ and $a,b\ne 0$.
In this case the curve $X$ has one singular point, which is an ordinary double point:
$p_s=(-\frac{3b}{2a}:0:1)$. 
We have
$$
\rho_f(h) = \left\{
\begin{array}{cl}
3 &\textrm{if } h\not\in X,\\
2 &\textrm{if } h\in X\setminus\{p_s\},\\
1 &\textrm{if } h =p_s.
\end{array}\right.
$$
The ideal $I(\rho_f)$ is generated by all generators from case (I) and 
by $(-\frac{3b}{2a}x_1 + x_3)^2$, so we have
$\Hilb(C(\rho_f); q)= 1 + 3q + 5q^2 +q^3$.

\medskip
(III) $a=b=0$.
In this case the curve $X$ has one singular point, which is a cusp:
$p_c=(0:0:1)$.  We have
$$
\rho_f(h) = \left\{
\begin{array}{cl}
3 &\textrm{if } h\not\in X,\\
2 &\textrm{if } h\in X\setminus\{p_c\},\\
1 &\textrm{if } h =p_c.
\end{array}\right.
$$
The ideal $I(\rho_f)$ is generated by all generators from part (I) and 
by $x_3^2$.  Therefore
in this case we have $\Hilb(C(\rho_f); q)= 1 + 3q+5q^2+q^3$ also.

\begin{remark}  These examples show that, while the Hilbert series $\Hilb (\C[V^*]/I(\rho_f); q)$ determines whether the hypersurface $f(\x)=0$ is smooth, it may not distinguish between different types of singularities.
\end{remark}

\subsection{Hyperplane arrangements}\label{subsec:hyparr}

Consider the case where $f$ is a product of linear forms; say $f=l_1\ldots l_n$ where $l_1, \ldots, l_n \in V^*$. These forms define a hyperplane arrangement $\A = \{H_1, \ldots, H_n\}$ in $V$, and the hypersurface $X$ is the union of these hyperplanes.

\begin{proposition}
The directional degree function associated to a product of linear forms $f=l_1\ldots l_n$ is given by
\[
\rho_f(h) = \textrm{ number of hyperplanes in $\A$ not containing $h$.}
\]
\end{proposition}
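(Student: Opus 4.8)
The plan is to compute the degree of the restriction of $f = l_1 \cdots l_n$ to a generic affine line in direction $h$, using the fact from Lemma~\ref{lemma:degreeorder} that for a homogeneous polynomial of degree $n$ the directional degree $\rho_f(h)$ equals $n$ minus the order of vanishing of $f$ at the point $h \in \P V$.

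First I would observe that since $f$ is homogeneous of degree $n$, Lemma~\ref{lemma:degreeorder} applies: $\rho_f(h) = n - m$, where $m = \mathrm{ord}_h(f)$ is the largest integer such that every partial derivative of $f$ of order less than $m$ vanishes at $h$. So the statement is equivalent to showing that the order of vanishing of $f = l_1 \cdots l_n$ at a point $h$ equals the number of hyperplanes of $\A$ containing $h$, i.e.\ the number of indices $i$ with $l_i(h) = 0$.

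Next I would establish this order-of-vanishing claim directly. Let $J = \{ i : l_i(h) = 0\}$ and set $m = |J|$. Writing $f = \left(\prod_{i \in J} l_i\right)\left(\prod_{i \notin J} l_i\right)$, the second factor is nonzero at $h$, so $f$ lies in the ideal $\mathfrak{m}_h^m$ of functions vanishing to order $\geq m$ at $h$ (each $l_i$ with $i\in J$ vanishes at $h$, and a product of $m$ such linear forms vanishes to order at least $m$). This gives $\mathrm{ord}_h(f) \geq m$. For the reverse inequality, I would exhibit a direction along which $f$ restricted to a line through $h$ has a zero of order exactly $m$ at $h$: choose a vector $v \in V$ that is generic subject to $l_i(v) \neq 0$ for all $i$ — possible since the $l_i$ are finitely many nonzero forms — and consider $r(t) = f(h + tv) = \prod_{i=1}^n l_i(h + t v) = \prod_{i=1}^n (l_i(h) + t\, l_i(v))$. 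Each factor with $i \in J$ contributes exactly a simple zero at $t = 0$, and each factor with $i \notin J$ is nonzero at $t = 0$; hence $r(t)$ has a zero of order exactly $m$ at $t=0$, so $\mathrm{ord}_h(f) \leq m$. Combining, $\mathrm{ord}_h(f) = m$, and therefore $\rho_f(h) = n - m$, which is precisely the number of hyperplanes not containing $h$. One should note that by Proposition~\ref{prop:C_properties}.2(b) (shift invariance) and the genericity built into the definition of $\rho_f$, it suffices to compute the degree on a single generic line in direction $v$, so the choice of $v$ above is legitimate.

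The main obstacle is essentially bookkeeping rather than a deep difficulty: one must be careful to invoke the correct genericity, namely that $\rho_f(h)$ is defined via the degree on a \emph{generic} line parallel to $h$, so we are free to pick the auxiliary direction $v$ avoiding all the hyperplanes, and one must make sure the line $\{h + tv\}$ actually passes through a point representing $h$ in $\P V$ so that Lemma~\ref{lemma:degreeorder} can be applied at that point. Aside from this, the argument is a direct multiplicativity computation on the product $\prod (l_i(h) + t\, l_i(v))$.
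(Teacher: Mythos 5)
Your proof is correct, but it takes a somewhat different route from the paper's. The paper's proof is a one-step direct computation: it writes the restriction $f(a+th)=\prod_i\bigl(l_i(a)+t\,l_i(h)\bigr)$ for a generic base point $a$, and reads off the $t$-degree factor by factor, with no invocation of Lemma~\ref{lemma:degreeorder}. You instead first pass through Lemma~\ref{lemma:degreeorder} to rewrite $\rho_f(h)$ as $n-\mathrm{ord}_h(f)$, and then compute the order of vanishing of $f$ at $h$ by a two-sided argument (the factorization $f=\prod_{i\in J}l_i\cdot\prod_{i\notin J}l_i$ for the lower bound, restriction to a line $h+tv$ with generic $v$ for the upper bound). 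Both routes hinge on the same product expansion $\prod_i\bigl(l_i(\cdot)+t\,l_i(\cdot)\bigr)$, and the reformulation via the order of vanishing is exactly the content of Lemma~\ref{lemma:degreeorder}, so the proofs are morally dual; yours is a bit longer but has the small advantage of making the link to the fat-point/order-of-vanishing picture (used later in Section~\ref{sec:fatpoints}) explicit, while the paper's is more economical since it avoids the lemma entirely. One thing you could streamline: your two-sided bound on $\mathrm{ord}_h(f)$ is redundant — the restriction $r(t)=\prod_i\bigl(l_i(h)+t\,l_i(v)\bigr)$ for generic $v$ already computes $\mathrm{ord}_h(f)$ exactly, since the order of vanishing of a polynomial at a point equals the vanishing order of its restriction to a generic line through that point.
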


\begin{proof}
Along a line in direction $h \in V$, we have
\[ 
f(a+th) = \prod_{i=1}^n l_i(a+th) = \prod_{i=1}^n \left(l_i(a) + tl_i(h)\right).
\]
It follows that the $t$-degree of $f$ along this line is equal to the number of $l_i$s which don't vanish at $h$, as desired.
\end{proof}

For reasons which will soon become clear, we will study the power ideals determined by the functions $\rho_f(h)+k$ for $k \in \Z, k \geq -2$.
These power ideals that arise from hyperplane arrangements have many interesting properties. In particular, their Hilbert series only depend on the combinatorial structure of the arrangements, and can be computed explicitly in terms of the Tutte polynomial \cite{Ar07} of the arrangement. Section \ref{sec:hyparr} is devoted to this important case.

\section{Power ideals of hyperplane arrangements}\label{sec:hyparr}

In this section we focus on the interesting family of ideals related to a hyperplane arrangement $\A$ which arises from the previous construction. We will see that the Hilbert series of these ideals depend only on the matroid $M(\A)$, which stores the combinatorial structure of $\A$. We will need some basic facts about matroids, Tutte polynomials, and their connection with hyperplane arrangements. We will outline the necessary background information, and we refer the reader to \cite{Ar07, Ox,St} for further details.

\subsection{The ideals $I_{\A, k}$ and $I'_{\A, k}$.}

Let $\A$ be a central hyperplane arrangement in $V$; that is, a finite collection of hyperplanes $H_1, \ldots, H_n$, where $H_i = \{x \mid l_i(x)=0\}$ for some linear functional $l_i \in V^*$. We can also think of $\A$ as the vector arrangement $\{l_1, \ldots, l_n\}$ in $V^*$. Let $M(\A)$ be the matroid of $\A$.

Each hyperplane $H_i$ has a corresponding directional degree function $\rho_{H_i}$ which equals $0$ on $H_i$ and $1$ off $H_i$. By Proposition \ref{prop:rho_properties}, the function 
\[
\rho_{\A}+k = \rho_{H_1} + \ldots + \rho_{H_n} + k
\]
is also a directional degree function for every $k \geq 0$. Notice that, for a line $h \in V$,
\[
\rho_{\A}(h) = \textrm{ number of hyperplanes in $\A$ not containing $h$}.
\]
As remarked in Section \ref{subsec:hyparr}, this is precisely the directional degree function associated to the polynomial $l(\A)=l_1\cdots l_n \in \C[V^*]$.

The corresponding power ideal in $\C[V]$ is
\[
I_{\A,k} := I(\rho_\A + k)= \left<h^{\rho_\A(h)+k+1} \mid h \in V, h \ne 0\right>.
\]
We will study this ideal for $k \geq -2$, and show some difficulties that arise for $k \leq -3$.

One can also define the (a priori smaller) ideal
\[
I'_{\A,k} := \left<h^{\rho_\A(h)+k+1} \mid h \textrm{ is a line of the arrangement }\A \right>,
\]
where $h$ ranges only over the one-dimensional intersections of the hyperplanes in $\A$. 
In the special cases $k=-2, -1,0$, these ideals have received considerable attention \cite{Ar03, DM, DP, HR, PS, PSS, SX, Wa}. As mentioned in the introduction, they arise in problems of multivariate polynomial interpolation, in the study of fat point ideals, and in the study of zonotopal Cox rings, among others. In Theorem \ref{th:I=I'} we will prove that $I_{\A,k}=I'_{\A,k}$ in these three important special cases (clearly $I_{\A,k} \supseteq I'_{\A,k}$ in general). We will also show that $I_{\A,k}$ is in some sense better behaved than $I'_{\A, k}$. We will therefore focus our attention on the ideals $I_{\A, k}$.

The inverse system of $I_{\A,k}$ is the $\C[V]$-submodule
\[
C_{\A,k} := C(\rho_\A+k) = \left\{f(x) \in \C[V] \mid h\left(\dx\right)^{\rho_\A(h)+k+1} f(x) = 0 \textrm{ for all } h \in V, h \ne 0\right\}
\]
of $\C[V^*]$, graded by degree; $\C[V]$ acts on it by differentiation. It consists of the polynomials $f$ whose degree along a line is less than or equal to $k$ plus the degree of $l(\A)$ along that line.


\begin{figure}[h]
\begin{center}
\includegraphics[width=2.5in]{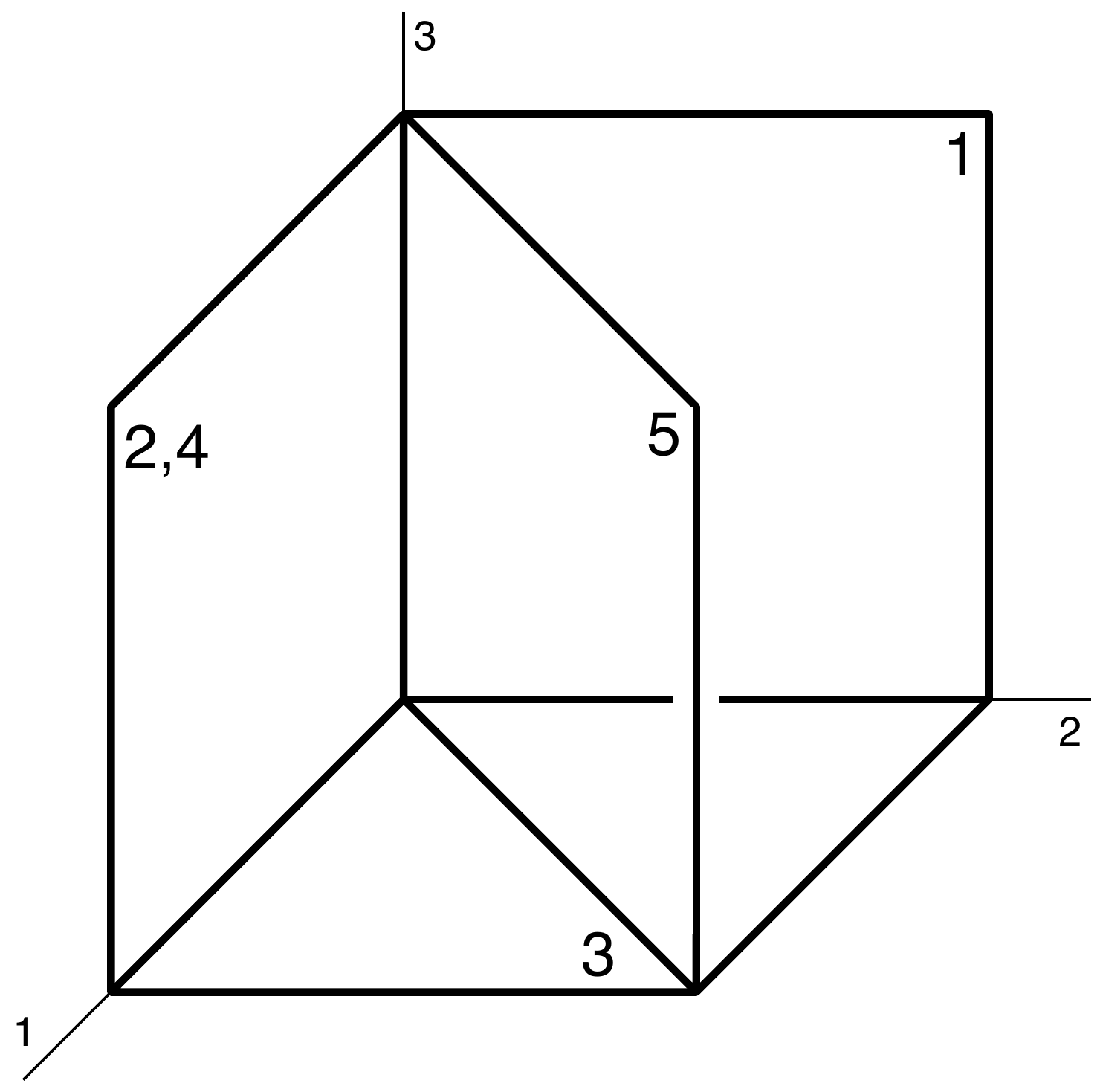} 
\caption{An arrangement of hyperplanes in three dimensions.}
\label{fig:arr1}
\end{center}
\end{figure}

\begin{example}\label{ex}
Let $\G$ be the three-dimensional arrangement of hyperplanes of Figure \ref{fig:arr1}, determined by the linear forms $l_1 = y_1, l_2 = y_2, l_3=y_3, l_4=y_2,  l_5=y_1-y_2$, and let $k=0$. Then

\[
I'_{\G,0} = \left<x_1^3, x_2^4, x_3^2, (x_1+x_2)^4 \right>
\]

and 

\begin{eqnarray*}
I_{\G,0} &=& \left<x_1^3, x_2^4, x_3^2, (x_1+x_2)^4, \right. \\
& & \left. (x_1+ax_2)^5, (x_1+bx_3)^4, (x_2+cx_3)^5, (x_1+dx_2+ex_3)^6 \right>
\end{eqnarray*}
where $a,b,c,d,e$ range over the complex numbers. Simplifying each generator on this list by the previous ones, 
\[
I_{\G,0} = \left<x_1^3, x_2^4, x_3^2, 6x_1^2x_2^2+4x_1x_2^3, 0, 0, 0, 0\right>.
\]
For example, the only monomial in $(x_1+ax_2)^5$ which is not generated by $x_1^3$ and $x_2^4$ is $x_1^2x_2^3$, which is generated as $x_2(6x_1^2x_2^2+4x_1x_2^3) - 4x_1(x_2^4)$. (In particular this means that $I_{\G,0} = I'_{\G,0}$.) Thus
\begin{eqnarray*}
C_{\G, 0} &=& \span(1; \,\,\,\, y_1,y_2,y_3; \,\, \,\, y_1^2, y_2^2, y_1y_2,y_1y_3, y_2y_3;  \\
& & y_2^3, y_1^2y_2, y_1^2y_3, y_1y_2^2,y_2^2y_3, y_1y_2y_3; \,\,\,\, y_1y_2^3-y_1^2y_2^2,  y_2^3y_3, y_1^2y_2y_3, y_1y_2^2y_3; \\
& & y_1y_2^3y_3-y_1^2y_2^2y_3.)
\end{eqnarray*}
and
 \[
\Hilb(C_{\G, 0};q) = 1+3q+5q^2+6q^3+4q^4+q^5. \hfill
\]
\end{example}
%

Our next example shows that the ideals $I_{\A, k}$ and $I'_{\A, k}$ are generally different for $k \geq 1$.

\begin{example}
Let $\H$ be the arrangement in $\R^2$ determined by the linear forms $y_1$ and $y_2$ in $\R^2$. Then
\[
I_{\H,k} = \left<x_1^{k+2}, x_2^{k+2}, (x_1+ax_2)^{k+3} \, | \, a \in \R \right>, \qquad I'_{\H,k} = \left<x_1^{k+2}, x_2^{k+2} \right>.
\]
If we choose $k+4$ different values of $a$, the resulting polynomials $(x_1+ax_2)^{k+3}$ in $I_{\H, k}$ will linearly span all polynomials of degree $k+3$. In $I'_{\H,k}$, on the other hand, the degree $k+3$ component is spanned by $x_1^{k+3}, x_1^{k+2}x_2, x_1x_2^{k+2}, x_2^{k+3}$. These only coincide for $k=-2, -1, 0$.
\end{example}

\begin{proposition} \label{prop:coloops} Let $\H_n^{m+n}$ be an arrangement of $n$ generic hyperplanes in $\C^{m+n}$ and let $k \geq 0$.  Then 
\[
\Hilb(C_{\H_n^m,k};q) = \left[z^k\right] \left(1+\frac{q}{1-qz}\right)^n \frac{(1-qz)^{-m}}{(1-z)}
\] 
\end{proposition}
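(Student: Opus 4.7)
The plan is to reduce to the coordinate hyperplane case via a change of coordinates, observe that the resulting ideal is monomial, identify it explicitly, and read off the Hilbert series by a factorized generating function. Since the $n$ hyperplanes are generic in $(m+n)$-dimensional space, after a linear change of coordinates we may assume $H_i = \{x_i = 0\}$ for $i \in [n]$, leaving $m$ free coordinates $x_{n+1}, \ldots, x_{m+n}$. Then $\rho_\A(h) = |\{i \in [n] : h_i \neq 0\}|$ depends only on $\supp(h)$, so $\rho_\A$ is invariant under the coordinate torus; both $I_{\A,k}$ and $C_{\A,k}$ are therefore torus-invariant, hence spanned by monomials.

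Next, I would identify the monomial ideal explicitly. For each $T \subseteq [m+n]$, polarization shows that $\{h^{|T \cap [n]| + k + 1} : \supp(h) \subseteq T\}$ spans the full symmetric power of $\span(x_i : i \in T)$ in degree $|T \cap [n]| + k + 1$, so the ideal contains every monomial $x^b$ with $\supp(b) \subseteq T$ and $|b| = |T \cap [n]| + k + 1$. From this I claim
\[
x^a \in I_{\A, k} \quad\iff\quad |a| \geq |\supp(a) \cap [n]| + k + 1.
\]
The ``$\Leftarrow$'' direction is handled by choosing a sub-monomial $x^b$ with $b \leq a$, $\supp(b) \subseteq \supp(a)$, and $|b| = \min(|a|, n+k+1)$, which sits in one of the polarized generator spaces. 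The ``$\Rightarrow$'' direction uses that in any factorization $x^a = x^b \cdot x^c$ with $x^b$ such a monomial generator, each $i \in [n] \cap (\supp(a) \setminus \supp(b))$ contributes at least one unit to $|a| - |b|$, so $|a| - |b| \geq |\supp(a) \cap [n]| - |\supp(b) \cap [n]|$; combined with $|b| \geq |\supp(b) \cap [n]| + k + 1$ this yields the claim. This ``$\Rightarrow$'' step, although only a few lines, is the crux of the argument.

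Finally, dualizing shows that $C_{\A,k}$ is the monomial span of all $x^a$ satisfying $\sum_{i=1}^{n} \max(a_i - 1, 0) + \sum_{j = n+1}^{m+n} a_j \leq k$. This constraint factorizes variable by variable: each of the $n$ arrangement variables contributes $\sum_{a \geq 0} q^a z^{\max(a-1,0)} = 1 + q/(1-qz)$, with $z$ tracking the budget, and each of the $m$ free variables contributes $\sum_{a \geq 0} q^a z^a = 1/(1-qz)$. Extracting the sum of the coefficients of $z^0, \ldots, z^k$ by multiplying by $1/(1-z)$ and taking $[z^k]$ produces exactly the claimed formula.
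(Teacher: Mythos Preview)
Your proof is correct and follows essentially the same route as the paper: reduce to coordinate hyperplanes, show the ideal is monomial (you via torus invariance, the paper by explicitly varying coefficients), identify the monomials in $C_{\H_n^m,k}$ as those $y^a$ with $|a|\le |\mathrm{supp}(a)\cap[n]|+k$, and then count. Your packaging of the count as a factorized generating function in an auxiliary variable $z$ is a bit slicker than the paper's direct double sum, and you make the ``$\Rightarrow$'' step explicit where the paper passes over it; but the substance is the same.
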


\begin{proof}
Let $N=m+n$. Fixing a basis $x_1, \ldots, x_n, X_1, \ldots, X_m$ for $V = \C^{N}$, we can assume that the hyperplanes are $x_1=0, \ldots, x_n=0$.
Then
\begin{eqnarray*}
I_{\H_n^m, k} &=& \left< (a_1x_{i_1} + \cdots + a_t x_{i_t}+b_1X_1 + \cdots + b_m X_m)^{t+k+1} \, \mid\,  \right.\\
& & \{i_1, \ldots, i_t\} \subseteq [n], a_1 \ldots, a_t \in \C^*,  b_1, \ldots, b_m \in \C. \Big> 
\end{eqnarray*}
By fixing $x_{i_1}, \ldots, x_{i_t}$ and varying $a_1, \ldots, a_t, b_1, \ldots, b_m$, these powers of linear forms generate every monomial $x_{i_1}^{\alpha_1} \cdots x_{i_t}^{\alpha_t}X_1^{\beta_1}\cdots X_m^{\beta_m}$ 
of degree $t+k+1$, and those with some $\alpha_i=0$ are generated by a smaller such monomial. Therefore
\[
I_{\H_n^m, k} = \left< x_{i_1}^{\alpha_1} \cdots x_{i_t}^{\alpha_t} \, X_1^{\beta_1}\cdots X_m^{\beta_m} 
\, \mid \,
\alpha_i >0, \, \textstyle \sum \alpha_i+ \sum \beta_j = t+k+1 \right>
\]
and, with respect to the dual basis $y_1, \ldots, y_n, Y_1, \ldots, Y_m$ of $V^*$,
\[
C_{\H_n^m, k} = \span \left( y_{i_1}^{\alpha_1} \cdots y_{i_t}^{\alpha_t} \, Y_1^{\beta_1}\cdots Y_m^{\beta_m} 
 \mid 
 \alpha_i >0, \, \textstyle 
 \sum \alpha_i+ \sum \beta_j \leq t+k\right).
\]

Let us count the monomials in $C_{\H_n^m, k} $ of degree $s+t$ which involve exactly $t$ variables among $y_1, \ldots, y_n$. We have $s\leq k$ necessarily. There are ${n \choose t}$ choices for the variables, and ${s+t+m-1\choose s}$ ways to write $s = \sum(\alpha_i-1) + \sum \beta_j$ as a sum of $t+m$ nonnegative integers. Therefore
\[
\Hilb(C_{\H_n^m,k};q) = \sum_{t=0}^n \sum_{s=0}^k {n \choose t} {s+t+m-1 \choose s} q^{s+t}.
\] 
Since $\sum_{t=0}^k {s+t+m-1 \choose s} q^t$ is the coefficient of $z^k$ in $(1-qz)^{-(m+t)}/(1-z)$, we can rewrite this as
\[
\Hilb(C_{\H_n^m,k};q) = \left[z^k\right] \frac{(1-qz)^{-m}}{1-z} \sum_{t=0}^n {n \choose t} \left(\frac{q}{1-qz} \right)^t,
\] 
which gives the desired result.
\end{proof}

\subsection{Deletion and contraction}

We now recall the operations of deletion and contraction.
Suppose that hyperplane $H_1$ in $\A=\{H_1, \ldots, H_n\}$ is not a loop or coloop. The \emph{deletion} of $H_1$ in $\A$ is the arrangement $\A \backslash H_1=\{H_2, H_3, \ldots, H_n\}$ in $V$. The corresponding linear forms are $l_2, \ldots, l_n$ in $V^*$. The \emph{contraction} of $H_1$ in $\A$ (also called the \emph{restriction} of  $\A$ to $H_1$) is the arrangement $\A/H_1 := \{H_1 \cap H_2, H_1 \cap H_3, \ldots, H_1 \cap H_n\}$ in $H_1$. The corresponding linear forms are the images of $l_2, \ldots, l_n$ in the quotient vector space $V^*/l_1 \simeq H^*$. 

\begin{proposition}\label{prop:exact}
Let $\A$ be a hyperplane arrangement and $k \geq -2$.\footnote{For $k=-2$ we need to assume that $\A, \A \backslash H$ and $\A/H$ have no coloops.}
\begin{enumerate}
\item
If $H \in \A$ is not a loop, then there is an exact sequence
\[
0 \rightarrow C_{\A \backslash H, k}(-1) \rightarrow C_{\A,k} \rightarrow C_{\A / H, k} \rightarrow 0
\]
of graded $\C$-vector spaces. Here $C_{\A \backslash H, k}(-1)$ denotes the vector space $C_{\A \backslash H, k}$ with degree shifted up by one.
\item
If $H \in \A$ is a loop, then $ C_{\A,k} = C_{\A \backslash H, k}$.
\end{enumerate}
\end{proposition}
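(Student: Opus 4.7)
Part 2 is immediate. A loop means the defining linear form $l_H$ is zero, so $H = V$ and $\rho_H(h) = [h \notin H] = 0$ for every nonzero $h \in V$; hence $\rho_\A = \rho_{\A \backslash H}$, and the two power inverse systems coincide.

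For Part 1, let $l = l_H \in V^*$ be the defining form of $H$. I realize the two arrows of the exact sequence concretely: multiplication by $l$ gives $C_{\A \backslash H, k}(-1) \to C_{\A, k}$, and restriction $f \mapsto f|_H$ (reduction modulo $l$) gives $C_{\A, k} \to C_{\A/H, k}$. Both well-definedness checks reduce to the additivity $\rho_\A = \rho_{\A \backslash H} + \rho_H$ together with the observation that $\rho_H(h) = [h \notin H]$ is exactly the degree of $l$ along direction $h$. On one hand, $\deg_h(lf) = \deg_h l + \deg_h f \leq \rho_H(h) + \rho_{\A \backslash H}(h) + k = \rho_\A(h) + k$ for $f \in C_{\A \backslash H, k}$, placing $lf$ in $C_{\A, k}$. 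On the other hand, for $h \in H$ we have $\rho_\A(h) = \rho_{\A/H}(h)$ (since for $i \geq 2$, $h \in H_i$ iff $h \in H_i \cap H$), and restriction preserves the degrees of $f$ along lines in $H$. Injectivity of multiplication by $l$ is clear, and the identification $\ker(\textrm{restriction}) = l \cdot C_{\A \backslash H, k}$ uses Proposition~\ref{prop:C_properties}(2)(c): if $f \in C_{\A, k}$ vanishes on $H$, write $f = lg$ in $\C[V^*]$; the proposition places $g$ in $C_{\A, k}$, and the bound $\deg_h g = \deg_h f - [h \notin H] \leq \rho_{\A \backslash H}(h) + k$ then places $g$ in $C_{\A \backslash H, k}$.

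The main obstacle is surjectivity of the restriction. Given $g \in C_{\A/H, k}$, constructing a lift $f \in C_{\A, k}$ with $f|_H = g$ is subtle: the naive extension of $g$ to a polynomial in $\C[V^*]$ that does not involve $l$ need not lie in $C_{\A, k}$, because along a direction $h' \notin H$ the projection of $h'$ into $H$ can avoid more hyperplanes of $\A/H$ than $h'$ avoids in $\A$, pushing the degree of the naive extension above $\rho_\A(h') + k$. The plan is to correct the naive extension by adding carefully chosen multiples of $l$ whose leading terms along each offending direction cancel the excess degree, while the restriction to $H$ is left unchanged; iterating this correction in order of decreasing degree produces the desired lift. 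Organizing these corrections simultaneously for every problematic direction, and verifying that the resulting polynomial indeed lies in $C_{\A, k}$, is the technical heart of the argument and the place where the combinatorial relationship between $\A$, $\A \backslash H$, and $\A/H$ enters most delicately.
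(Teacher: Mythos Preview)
Your treatment of Part 2, injectivity, and exactness in the middle is correct and matches the paper's argument closely. The gap is in surjectivity: what you have written is a plan, not a proof. You acknowledge as much when you call it ``the technical heart of the argument'' without carrying it out. The correction-by-multiples-of-$l$ strategy you outline is not obviously convergent: there are infinitely many directions $h' \notin H$ to control, and a correction tailored to one of them has no reason to respect the degree bounds along the others. You give no mechanism for organizing these corrections simultaneously, and I do not see how to make this approach work directly.

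The paper sidesteps this difficulty entirely by proving Propositions~\ref{prop:exact} and~\ref{prop:generate} in a \emph{joint induction} on the number of hyperplanes that are neither loops nor coloops. The base case is an arrangement of coloops (plus loops), handled by the explicit computation of Proposition~\ref{prop:coloops}. For the inductive step, surjectivity of the restriction map $C_{\A,k} \to C_{\A/H,k}$ is proved not by lifting an arbitrary element, but by lifting a \emph{spanning set}: the inductive hypothesis says $C_{\A/H,k}$ is spanned by products $f \cdot l'_S$ with $f \in \C[H^*]$ of degree $\le k$ and $l'_s$ the image of $l_s$ in $H^*$; each such product is visibly the restriction of $f \cdot l_S \in C_{\A,k}$. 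Once the exact sequence is established for $\A$, Proposition~\ref{prop:generate} for $\A$ follows from it and the inductive hypothesis for $\A \backslash H$ and $\A/H$. So the two propositions bootstrap each other, and surjectivity never requires constructing a lift of a general element.
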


\begin{proposition}\label{prop:generate}
Let $\A=\{H_1, \ldots, H_n\}$ be a hyperplane arrangement in $V$ with corresponding linear forms $l_1, \ldots, l_n$ in $V^*$, and let $k \geq 0$.
\begin{enumerate}
\item
For $k \geq 0$, the space $C_{\A, k}$ is spanned by the polynomials $fl_S = f \prod_{s \in S} l_s$, where $f$ is a polynomial in $\C[V^*]$ of degree at most $k$ and $S$ is a subset of $[n]$.
\item
For $k =-1$, the space $C_{\A, -1}$ is spanned by the polynomials $l_S = \prod_{s \in S} l_s$, where $S$ is a subset of $[n]$ such that $[n] -  S$ has full rank.
\item
For $k =-2$, the space $C_{\A, -2}$ is spanned by the polynomials $l_S=\prod_{s \in S} l_s$, where $S$ is a subset of $[n]$ such that $[n] - S - x$ has full rank for all $x \notin S$.
\end{enumerate}
\end{proposition}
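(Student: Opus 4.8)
The plan is to prove the three statements by using the exact sequence of Proposition~\ref{prop:exact} and induction on the number $n$ of hyperplanes, separately for each value of $k$. For the base cases I would take arrangements with no hyperplanes, where the claims are immediate (for $k\geq 0$ the space $C_{\emptyset,k}$ consists of all polynomials of degree $\leq k$, spanned by the $f$ with $S=\emptyset$; for $k=-1,-2$ it is just the constants, corresponding to $S=\emptyset$ and the trivial full-rank conditions), and I would also invoke Proposition~\ref{prop:coloops} (or a direct monomial computation) to handle the case where $\A$ consists entirely of coloops, since the inductive step via deletion-contraction requires a non-coloop hyperplane. The inductive step proceeds by choosing a hyperplane $H=H_1$ which is not a loop (and, when some care is needed for the rank conditions, not a coloop), writing the short exact sequence
\[
0 \to C_{\A\backslash H,k}(-1) \to C_{\A,k} \xrightarrow{\ \pi\ } C_{\A/H,k}\to 0,
\]
and lifting spanning sets of the two outer terms to a spanning set of $C_{\A,k}$.

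For part (1), the map $C_{\A\backslash H_1,k}(-1)\to C_{\A,k}$ is multiplication by $l_1$, so by induction its image is spanned by polynomials $l_1\cdot(f\, l_S)=f\,l_{S\cup\{1\}}$ with $\deg f\le k$ and $S\subseteq\{2,\dots,n\}$; these are of the required form. For the quotient, the surjection $\pi$ is restriction/projection to $H_1$: by induction $C_{\A/H_1,k}$ is spanned by $\bar f\prod_{s\in S}\bar l_s$ with $S\subseteq\{2,\dots,n\}$ and $\bar f$ of degree $\le k$ in $\C[H_1^*]=\C[V^*/l_1]$, and each such element lifts to $f\,l_S\in C_{\A,k}$ (choosing any lift $f$ of $\bar f$). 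Combining a lift of a spanning set of the quotient with the image of a spanning set of the sub gives a spanning set of $C_{\A,k}$ of the desired shape, completing the induction for $k\ge 0$.

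For parts (2) and (3) the same scheme applies but one must track the rank conditions through deletion and contraction. The key combinatorial facts are: a subset $T$ of the ground set of $\A\backslash H_1$ has full rank in $\A\backslash H_1$ iff $T$ has full rank in $\A$; and a subset $T$ of the ground set of $\A/H_1$ has full rank in $\A/H_1$ iff $T\cup\{1\}$ has full rank in $\A$. Using these, in part (2) the sub-object contributes $l_1\cdot l_S=l_{S\cup\{1\}}$ with $\{2,\dots,n\}-S$ full rank in $\A\backslash H_1$, i.e. $[n]-(S\cup\{1\})$ full rank in $\A$ — wait, one checks instead that $[n]-(S\cup\{1\})=\{2,\dots,n\}-S$, and full rank in $\A\backslash H_1$ is equivalent to full rank in $\A$; and the quotient contributes lifts $l_S$ with $\{2,\dots,n\}-S$ full rank in $\A/H_1$, i.e. $[n]-S$ full rank in $\A$. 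So every element produced satisfies the condition in (2), and conversely the dimension count from the exact sequence shows we obtain a full spanning set; the argument for (3) is analogous, replacing "$[n]-S$ full rank" by "$[n]-S-x$ full rank for all $x\notin S$" and using the corresponding (slightly more delicate) matroid translations under deletion and contraction, which is why coloops must be excluded in the $k=-2$ footnote of Proposition~\ref{prop:exact}.

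The main obstacle I anticipate is precisely this bookkeeping of rank conditions under deletion and contraction in parts (2) and (3): one must verify not only that every lifted or pushed-forward spanning element satisfies the stated condition, but also that the resulting collection actually spans — equivalently, that no relations collapse the count below $\dim C_{\A,k}$ — which forces one to check that the exact sequence respects the combinatorial stratification (e.g., that an $l_S$ in $C_{\A,k}$ with $1\in S$ really is the image of $l_{S\setminus\{1\}}\in C_{\A\backslash H_1,k}$, and one with $1\notin S$ really maps to $\bar l_S\ne 0$ in the quotient). The $k=-2$ case additionally requires the coloop hypothesis to guarantee that a non-loop, non-coloop $H_1$ exists and that the contraction $\A/H_1$ still has no coloops, so the induction stays within the hypotheses of Proposition~\ref{prop:exact}.
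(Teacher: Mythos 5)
Your approach is essentially the paper's: a deletion--contraction induction using the exact sequence of Proposition~\ref{prop:exact}, taking coloop-only arrangements (Proposition~\ref{prop:coloops}) as the base case for $k\geq 0$, and translating the full-rank conditions through deletion and contraction exactly as you describe for $k=-1,-2$. Two points to tighten. First, the paper does not (and cannot) treat Proposition~\ref{prop:exact} as a black box: proving surjectivity of $C_{\A,k}\to C_{\A/H,k}$ already requires a spanning set for $C_{\A/H,k}$ of the form $fl'_S$, i.e.\ the inductive hypothesis of Proposition~\ref{prop:generate} for the contraction, so the two propositions are proved by a single joint induction on the number of non-loop non-coloop elements; your scheme implicitly relies on this and should say so. Second, your base case for $k=-2$ is off: the empty or coloop-only arrangements cannot serve there, since $C_{\A,-2}=0$ as soon as $\A$ has a coloop (so the statement is vacuous and carries no inductive content); the paper instead takes the rank-$n$ arrangement of $n+1$ generic hyperplanes, the minimal coloop-free case, as the initial step. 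Also, no ``dimension count'' is needed once the sequence is exact --- the image of a spanning set of $C_{\A\backslash H,k}$ together with arbitrary lifts of a spanning set of $C_{\A/H,k}$ automatically spans $C_{\A,k}$.
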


\begin{proof}[Proof of Propositions \ref{prop:exact} and \ref{prop:generate}] In what follows, we will use the description of $C_{\A,k}$ as the set of polynomials $f$  in $\C[V^*]$ 
whose degree $\rho_f(h)$ on a generic line parallel to $h \in V$ is bounded above by $\rho_\A(h)+k = \rho_{l(\A)}(h)+k$, where $l(\A)$ is the defining polynomial of $\A$. For the polynomials in $C_{\A \backslash H,k}$ the bounds are the same along directions contained in $H$, and they are decreased by one along directions not contained in $H$. For the polynomials in $C_{\A / H,k}$, the bounds are the same, but only concern the directions contained in $H$, where these polynomials are defined.

We will prove Propositions \ref{prop:exact} and \ref{prop:generate} in a joint induction on the number of hyperplanes which are neither loops nor coloops. 
We will first settle the case $k \geq 0$.
The base case is a hyperplane arrangement consisting of only loops and coloops. A loop in a hyperplane arrangement in $V$ is the ``hyperplane" $V$ with linear form $0 \in V^*$; it is not noticed by $I_{\A,k}$ and $C_{\A,k}$ and can be safely ignored. Modulo a change of basis, the hyperplane arrangements with only coloops are the arrangements $\H_n^m$. As seen in Proposition \ref{prop:coloops}, $C_{\H_n^m,k}$ is generated by the monomials of the form $y_{i_1}^{\alpha_1} \cdots y_{i_t}^{\alpha_t}Y_1^{\beta_1}\cdots Y_m^{\beta_m}$ with $\alpha_i \geq 1$ and $\sum \alpha_i + \sum \beta_j \leq t+k$. Such a monomial can be rewritten as $f y_{i_1} \cdots y_{i_t}$ where $f$ 
has degree $\leq k$.

Now  suppose that $\A$ is an arrangement and $H$ is a hyperplane of $\A$ which is not a loop or coloop, and suppose that Propositions \ref{prop:exact} and \ref{prop:generate} are true for $\A \backslash H$ and $\A/H$. There is no loss of generality in assuming that $H$ is the first coordinate hyperplane, and $y_1$ is the corresponding linear functional.

By the previous discussion on $C_{\A \backslash H,k}$ and $C_{\A / H,k}$, we have maps
\[
0 \rightarrow C_{\A \backslash H, k}(-1) \xrightarrow{\cdot y_1} C_{\A,k} \xrightarrow{y_1=0} C_{\A / H, k} \rightarrow 0
\]
given by multiplying by $y_1$, and setting $y_1=0$, respectively. Injectivity on the left is immediate. 

To prove exactness in the middle, notice that a polynomial $f$ in $C_{\A,k}$ which maps to $0$ must be a multiple of $y_1$; say $f=y_1g$. To check that $g \in C_{\A \backslash H, k}$ we verify directional degrees.
Since $\rho_{y_1g}(h) \leq \rho_{l(\A)}(h) +k = \rho_{y_1l(\A \backslash H)}(h) +k$ for any direction $h$, it follows that $\rho_{g}(h) \leq \rho_{l(\A \backslash H)}(h) +k$ for any direction $h$.
%

To prove exactness on the right, we use the inductive hypothesis that $C_{\A /H, k}$ is spanned by the products $fl'_S = f \prod_{s \in S} l'_s$, where $f \in \C[H^*]$ of degree $\leq k$, $S$ is a subset of $\{2, \ldots, n\}$, and $l'_s$ is the image of $l_s$ in $H^*$. But this is the image of $fl_S = f \prod_{s \in S} l_s$, which is in $C_{\A,k}$. This proves Proposition \ref{prop:exact} for $\A$.

To prove Proposition \ref{prop:generate} for $\A$ notice that the products $fl_S$ involving $l=y_1$ are the images of the products which generate $C_{\A \backslash H, k}$, while the products $fl_S$ not involving $l=y_1$ map to the generators of $C_{\A / H, k}$. The desired result then follows from Proposition \ref{prop:exact} for $\A$ and the fact that a short exact sequence of vector spaces splits. 

\medskip

For $k=-1, -2$, the proof works in essentially the same way. One needs to be careful about the initial case of the induction, and to adapt the argument in the previous paragraph, as follows.

The initial case of the induction for $k=-1$ is still the arrangement $\H_n^m$, for which $C_{\H_n^m, -1}=\span(1)$, which agrees with the fact that only the set $[n]$ has full rank. When $k=-2$, $C_{\A, -2}$ is only defined when $\A$ has no coloops, so our initial case is the rank $n$ arrangement of $n+1$ generic hyperplanes, where our claim is easily verified. 

By the inductive step of Proposition \ref{prop:generate} for $k = 0$, the  products $l_S$ involving $l=y_1$ are the images of the generators of $C_{\A \backslash H, 0}$, while the products $l_S$ not involving $l=y_1$ map to the generators of $C_{\A / H, 0}$. One then needs to refine these statements by easily checking that they are compatible with the conditions of $[n]-S$ having full rank (for $k=-1$), and $[n]-S-x$ having full rank for all $x$ (for $k=-2$). 
\end{proof}

\subsection{Hilbert series}

Our next goal is to prove that $\Hilb(C_{\A,k};q)$ is an invariant of the matroid $M(\A)$ and the ``excess" dimension $m=\dim V - r(M(\A))$ between the vector space $V$ that $\A$ lives in and the rank of $\A$. It is important to observe that this quantity does depend on $m$. For instance, the arrangements $\H_n^m$ of Proposition \ref{prop:coloops} all have the same matroid but different  Hilbert series $\Hilb(C_{\H_n^m,k};q)$.

\begin{proposition}\label{prop:delcontr}
Let $\A$ be a hyperplane arrangement and let $H$ be a hyperplane which is neither a loop nor a coloop. Then
\[
\Hilb(C_{\A,k};q) = q\Hilb(C_{\A \backslash H, k};q) + \Hilb(C_{\A / H,k};q)
\]
for $k \geq -2$.
\end{proposition}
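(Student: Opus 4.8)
The plan is to deduce Proposition~\ref{prop:delcontr} immediately from the short exact sequence furnished by Proposition~\ref{prop:exact}. Since $H$ is neither a loop nor a coloop, part (1) of Proposition~\ref{prop:exact} applies (and the footnote hypothesis for $k=-2$ is exactly the statement that $\A$, $\A\backslash H$ and $\A/H$ have no coloops, which is consistent with assuming $H$ is not a coloop in a coloop-free setting; I would simply carry that hypothesis along). Thus we have an exact sequence of graded $\C$-vector spaces
\[
0 \rightarrow C_{\A \backslash H, k}(-1) \rightarrow C_{\A,k} \rightarrow C_{\A / H, k} \rightarrow 0.
\]

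First I would recall that a short exact sequence of graded vector spaces is split in each degree, so that in every degree $i$ one has $\dim (C_{\A,k})_i = \dim (C_{\A\backslash H,k}(-1))_i + \dim (C_{\A/H,k})_i$. Next I would translate the degree shift: by definition $(C_{\A\backslash H,k}(-1))_i = (C_{\A\backslash H,k})_{i-1}$, so $\dim(C_{\A\backslash H,k}(-1))_i = \dim(C_{\A\backslash H,k})_{i-1}$. Multiplying by $q^i$ and summing over $i \geq 0$, the middle term gives $\Hilb(C_{\A,k};q)$, the first term gives $\sum_i \dim(C_{\A\backslash H,k})_{i-1}\,q^i = q\sum_j \dim(C_{\A\backslash H,k})_j\,q^j = q\,\Hilb(C_{\A\backslash H,k};q)$, and the last term gives $\Hilb(C_{\A/H,k};q)$. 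This yields the claimed identity.

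There is essentially no obstacle here: the entire content of the proposition has been packaged into Proposition~\ref{prop:exact}, and the remaining work is the standard observation that Hilbert series are additive on short exact sequences of graded vector spaces, with a degree shift by $n$ contributing a factor $q^n$. The only point requiring a word of care is the coloop hypothesis in the $k=-2$ case, which I would handle by noting that it is subsumed under the standing assumption and the hypothesis that $H$ is not a coloop, so that Proposition~\ref{prop:exact}(1) genuinely applies. Hence the proof is a two-line deduction.

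\begin{proof}
Since $H$ is neither a loop nor a coloop, Proposition~\ref{prop:exact}(1) gives an exact sequence of graded $\C$-vector spaces
\[
0 \rightarrow C_{\A \backslash H, k}(-1) \rightarrow C_{\A,k} \rightarrow C_{\A / H, k} \rightarrow 0.
\]
(For $k=-2$ the required absence of coloops in $\A$, $\A\backslash H$, $\A/H$ is part of our hypotheses.) Taking dimensions in each graded component, and using that a short exact sequence of vector spaces splits, we get $\dim (C_{\A,k})_i = \dim (C_{\A \backslash H,k})_{i-1} + \dim (C_{\A/H,k})_i$ for all $i$. Multiplying by $q^i$ and summing over $i \geq 0$ yields
\[
\Hilb(C_{\A,k};q) = q\,\Hilb(C_{\A \backslash H, k};q) + \Hilb(C_{\A / H,k};q),
\]
as desired.
\end{proof}
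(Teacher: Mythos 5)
Your proof is correct and is exactly the argument the paper intends: the paper's own proof of this proposition consists of the single sentence ``This follows immediately from Proposition~\ref{prop:exact},'' and you have simply filled in the routine details about additivity of Hilbert series on short exact sequences and the effect of the degree shift. Your handling of the $k=-2$ coloop hypothesis matches the paper's footnote.
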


\begin{proof}
This follow immediately from Proposition \ref{prop:exact}.
\end{proof}

\begin{proposition}\label{prop:loopcoloop}
Let $\A$ be a hyperplane arrangement.
\begin{enumerate}
\item
If $H$ is a loop in $\A$ then $\Hilb(C_{\A,k};q) = \Hilb(C_{\A \backslash H, k};q)$.
\item
If $H$ is a coloop in $\A$ then:
\begin{itemize}
\item
$\Hilb(C_{\A,0};q) = (1+q)\Hilb(C_{\A/H, 0};q)$.
\item
$\Hilb(C_{\A,-1};q) = \Hilb(C_{\A/H, -1};q)$.
\item
$\Hilb(C_{\A,-2};q) = 0$.
\end{itemize} 
\end{enumerate}
\end{proposition}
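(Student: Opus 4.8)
The plan is to handle the loop case and the three coloop cases separately, in each instance reducing to the description of $C_{\A,k}$ via directional degrees established in the proof of Propositions \ref{prop:exact} and \ref{prop:generate}. For the loop case, a loop $H$ has linear form $0 \in V^*$, so the generators $h^{\rho_\A(h)+k+1}$ of $I_{\A,k}$ are unchanged when $H$ is removed; this was already observed right before the statement (Proposition \ref{prop:exact}(2) and the base-case discussion in the joint proof), and it immediately gives $C_{\A,k}=C_{\A\backslash H,k}$, hence the equality of Hilbert series. Alternatively one can say: $\rho_\A = \rho_{\A\backslash H}$ as functions on $\P V$ because a loop lies on every line, so nothing changes.

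For the coloop cases I would set things up as in the proof of Proposition \ref{prop:coloops}: choose coordinates $x_1,\dots,x_n,X_1,\dots,X_m$ on $V$ so that $H=\{X_1 = 0\}$ say (the coloop contributes an extra ``coloop direction''), with dual coordinates $y_1,\dots,y_n,Y_1,\dots,Y_m$, and identify $H^*$ with the span of $y_1,\dots,y_n,Y_2,\dots,Y_m$. A coloop $H$ is a hyperplane that is not contained in the span of the others, so along the direction dual to $Y_1$ — more precisely, for a generic line — the defining polynomial $l(\A)$ picks up exactly one extra factor coming from $l_H$ compared with $l(\A\backslash H) = l(\A/H)$ lifted up. Concretely, for a direction $h\in V$, $\rho_\A(h) = \rho_{\A/H}(\bar h) + [\,l_H(h)\neq 0\,]$, where $\bar h$ is the image of $h$ in $V/\!\ker l_H \cong H$ when $l_H(h) \ne 0$, and the bound ``$\rho_f(h)\le\rho_\A(h)+k$'' defining membership in $C_{\A,k}$ thus allows one extra degree of freedom in the $Y_1$ direction. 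The clean way to package this: $C_{\A,k} = \{f(y_1,\dots,y_n,Y_1,\dots,Y_m) : f \text{ has degree} \le 1 \text{ in } Y_1 \text{ after an appropriate coordinate change, and setting } Y_1=0 \text{ lands in } C_{\A/H,k}\}$, i.e. there is an exact sequence $0 \to C_{\A/H,k}(-1) \xrightarrow{\cdot Y_1} C_{\A,k} \xrightarrow{Y_1=0} C_{\A/H,k} \to 0$ when $k \ge 0$. This yields $\Hilb(C_{\A,0};q) = (1+q)\Hilb(C_{\A/H,0};q)$. For $k=-1$, Proposition \ref{prop:generate}(2) says $C_{\A,-1}$ is spanned by $l_S$ with $[n]\setminus S$ of full rank; since $H$ is a coloop it must lie in every such complement, forcing $H\notin S$, and then $l_S$ for $\A$ corresponds exactly to $l_S$ for $\A/H$ with the same condition — no degree shift — giving $\Hilb(C_{\A,-1};q)=\Hilb(C_{\A/H,-1};q)$. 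For $k=-2$, Proposition \ref{prop:generate}(3) requires $[n]\setminus S\setminus x$ to have full rank for all $x\notin S$; taking $x$ to be (the index of) the coloop $H$, which is never in $S$, forces $[n]\setminus S\setminus H$ to have full rank, contradicting $H$ being a coloop. Hence $C_{\A,-2}=0$, so $\Hilb(C_{\A,-2};q)=0$. (This also matches the footnote to Proposition \ref{prop:exact}, which excludes coloops precisely in the $k=-2$ case.)

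The main obstacle is making the coloop analysis for $k \ge 0$ fully precise: one must verify that a coloop genuinely behaves, degree-wise, like the extra ``$Y_1$'' variable in $\H_n^{m+1}$ versus $\H_n^m$, i.e. that removing the coloop direction reduces all the directional-degree bounds by exactly one along every generic line while leaving the ``transverse'' structure (the arrangement $\A/H$) intact. I would do this by the same deletion/contraction bookkeeping used in the proof of Proposition \ref{prop:exact}: since a coloop cannot be deleted, one instead compares $\A$ directly with $\A/H$, checks that $\rho_\A(h) = \rho_{\A/H}(\pi(h)) + 1$ for generic $h$ (where $\pi$ is the relevant projection) and $= \rho_{\A/H}(\pi(h))$ exactly on the hyperplane $H$ itself, and reads off the exact sequence above. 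Given that, everything else is routine; the $k=-1,-2$ statements are short matroid arguments from Proposition \ref{prop:generate} as sketched.
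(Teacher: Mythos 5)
Your treatment of the loop case and of $C_{\A,-2}=0$ is correct, but the other two coloop cases have problems, all traceable to the assertion that ``a coloop cannot be deleted.'' A coloop can of course be deleted, and the paper's proof relies on exactly that: Proposition~\ref{prop:exact}(1) holds for any $H$ that is not a loop, coloop or not, so the deletion--contraction sequence
\[
0 \rightarrow C_{\A \backslash H, k}(-1) \rightarrow C_{\A,k} \rightarrow C_{\A / H, k} \rightarrow 0
\]
remains available. The only wrinkle is that $\A\backslash H$ now has larger excess dimension, which the paper handles directly: normalizing so the common intersection of $\A\backslash H$ is $\span(x_1)$, one has $\rho_{\A\backslash H}(x_1)=0$ and $\rho_{\A}(x_1)=1$, whence $C_{\A\backslash H,0}=C_{\A/H,0}$ (polynomials constant along $x_1$), $C_{\A\backslash H,-1}=0$ (since $1 = x_1^0 \in I_{\A\backslash H,-1}$), and $C_{\A,-2}=0$ (since $1 = x_1^0 \in I_{\A,-2}$). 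Plugging the first two into the sequence gives the $k=0$ and $k=-1$ formulas in one line each.

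Your proposed short exact sequence $0 \to C_{\A/H,k}(-1) \xrightarrow{\cdot Y_1} C_{\A,k} \xrightarrow{Y_1=0} C_{\A/H,k} \to 0$, stated ``for $k\geq 0$,'' is false for $k\geq 1$. Take $\A$ a single coloop in $\C^1$ with $k=1$: then $\Hilb(C_{\A,1};q)=1+q+q^2$ while your sequence would force $(1+q)\cdot 1 = 1+q$. The correct kernel is always $C_{\A\backslash H,k}(-1)$, and the two spaces $C_{\A\backslash H,k}$ and $C_{\A/H,k}$ coincide \emph{only} when $k=0$, because that is the unique value at which ``degree at most $k$ along the extra direction $x_1$'' collapses to ``independent of $y_1$.'' The proposition concerns only $k=0$, so your final answer is unaffected, but the intermediate claim is wrong, and any argument proving it as stated would have to contain an error.

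For $k=-1$, observing that the spanning set from Proposition~\ref{prop:generate}(2) for $\A$ is in degree-preserving bijection with that for $\A/H$ does not by itself yield equality of Hilbert series --- the two spanning sets might have different linear relations. This is repairable (the forms $l_i$ with $i\neq H$ span a hyperplane $W\subset V^*$, the restriction $W\to H^*$ is an isomorphism, and the induced graded isomorphism $\C[W]\to\C[H^*]$ carries $C_{\A,-1}$ onto $C_{\A/H,-1}$), but the paper's route via the exact sequence together with $C_{\A\backslash H,-1}=0$ sidesteps the issue. Also a small notational slip: $V/\ker l_H$ is one-dimensional, not isomorphic to $H$; you want the projection of $V$ onto $H$ along the line $\span(x_1)$, which is what makes $\rho_\A(h)=\rho_{\A/H}(\pi(h))+[\,l_H(h)\ne 0\,]$ correct.
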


\begin{proof}
The first part follows immediately from Proposition \ref{prop:exact}; let us prove the second. 
We can assume that the intersection of the hyperplanes of $\A \backslash H$ is the line containing $x_1$. 
The formula $\Hilb(C_{\A,k};q) = q\Hilb(C_{\A \backslash H, k};q) + \Hilb(C_{\A / H,k};q)$ still holds, but now $\A \backslash H$ has different excess dimension. This is a difficulty for $k \geq 1$, but we are fortunate when $k=0,-1,-2$.
For $k=0$, polynomials in $C_{\A \backslash H, 0}$ must be constant on the line $x_1$, so they cannot involve the variable $y_1$. Therefore $C_{\A \backslash H, 0} = C_{\A / H, 0}$, and the first statement follows. For $k=-1$, $I_{\A \backslash H,-1}$ contains $x_1^0=1$, so $C_{\A \backslash H, -1}=0$, which proves the second statement. For $k=-2$, $I_{\A, -2}$ contains $x_1^0=1$, so $C_{\A, -2}=0$, and the last statement follows. 
\end{proof}

\begin{definition}
The \emph{Tutte polynomial} of a matroid $M$ with ground set $E$ and rank function $r$ is defined by 
\[
T_M(x,y) = \sum_{A \subseteq E} (x-1)^{r(M)-r(A)}(y-1)^{|A|-r(A)}.
\]
\end{definition}

\begin{definition}
A function $f:\textsf{Mat} \rightarrow R$ from the class of finite matroids to a commutative ring $R$ is said to be a \emph{generalized Tutte-Grothendieck invariant} if there exist $a,b,L,C \in R$, with $a$ and $b$ invertible, such that the following properties hold.
\begin{enumerate}
\item
If $M$ and $N$ are isomorphic matroids then $f(M) = f(N)$.
\item
If $e$ is neither a loop nor a coloop of $M$, then $f(M) = af(M\backslash e) + bf(M/e).$
\item
If $e$ is a loop of $M$ then $f(M) = Lf(M\backslash e)$.
\item
If $e$ is a coloop of $M$ then $f(M) = Cf(M/e)$.
\item
$f(\emptyset) = 1$.
\end{enumerate}
A function $f:\textsf{Mat} \rightarrow R$ is said to be a \emph{weak generalized Tutte-Grothendieck invariant} if it satisfies conditions 1 and 2 above.
\end{definition}

The Tutte polynomial is the universal Tutte-Grothendieck invariant in the following sense.
\begin{proposition}{\rm\cite{Wh}}
Any generalized Tutte-Grothendieck invariant is an evaluation of the Tutte polynomial. With the notation above, $f(M)$ is given by
\[
f(M) = a^{|M|-r(M)}b^{r(M)} T_M\left(\frac{C}{b}, \frac{L}{a}\right).
\]
\end{proposition}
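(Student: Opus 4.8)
The plan is to set up an induction on the number of elements $|E|$ of the matroid $M$, using the recursive structure built into the definition of a generalized Tutte--Grothendieck invariant. The base case is $M = \emptyset$, for which property (5) gives $f(\emptyset) = 1$, and one checks directly from the definition of $T_M$ that $T_\emptyset(x,y) = 1$ (the empty sum has the single term $A = \emptyset$, contributing $(x-1)^0(y-1)^0 = 1$), so the claimed formula $a^{0}b^{0}T_\emptyset(C/b, L/a) = 1$ holds. For the inductive step, pick an element $e \in E$ and split into three cases according to whether $e$ is a loop, a coloop, or neither.

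First I would recall the standard deletion--contraction recursions for the Tutte polynomial itself, which follow from its definition: if $e$ is neither a loop nor a coloop then $T_M(x,y) = T_{M\backslash e}(x,y) + T_{M/e}(x,y)$; if $e$ is a coloop then $T_M(x,y) = x\, T_{M/e}(x,y)$; and if $e$ is a loop then $T_M(x,y) = y\, T_{M\backslash e}(x,y)$. These are classical and can be proved by partitioning the subsets $A \subseteq E$ according to whether $e \in A$ and comparing ranks; I would either cite \cite{Wh} for them or sketch the one-line computation. Note also that deletion and contraction each decrease the ground set size by one, so the inductive hypothesis applies to $M\backslash e$ and $M/e$.

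Now I would run the three cases. If $e$ is neither a loop nor a coloop, then $r(M\backslash e) = r(M/e) = r(M) - 0$ adjusted appropriately; precisely $r(M\backslash e) = r(M)$ and $r(M/e) = r(M) - 1$, while $|M\backslash e| = |M/e| = |M| - 1$. Applying property (2) and the inductive hypothesis,
\[
f(M) = a f(M\backslash e) + b f(M/e) = a\cdot a^{|M|-1-r(M)}b^{r(M)}T_{M\backslash e}\left(\tfrac{C}{b},\tfrac{L}{a}\right) + b\cdot a^{|M|-r(M)}b^{r(M)-1}T_{M/e}\left(\tfrac{C}{b},\tfrac{L}{a}\right),
\]
which equals $a^{|M|-r(M)}b^{r(M)}\bigl(T_{M\backslash e} + T_{M/e}\bigr)\bigl(\tfrac{C}{b},\tfrac{L}{a}\bigr) = a^{|M|-r(M)}b^{r(M)}T_M\bigl(\tfrac{C}{b},\tfrac{L}{a}\bigr)$. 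The coloop case uses $r(M/e) = r(M) - 1$, $|M/e| = |M| - 1$, property (4), and $T_M(x,y) = x\,T_{M/e}(x,y)$: the bookkeeping gives an extra factor $b$ on the left from $b^{r(M)} = b \cdot b^{r(M)-1}$, matched on the right by $C \cdot T_{M/e}(C/b, \cdot)$ producing exactly $b \cdot (C/b)\, T_{M/e} = C\, T_{M/e}$. The loop case is symmetric, using $r(M\backslash e) = r(M)$, $|M\backslash e| = |M|-1$, property (3), $T_M = y\,T_{M\backslash e}$, and the $L/a$ slot. Since $a$ and $b$ are invertible, all the arithmetic with $C/b$ and $L/a$ is legitimate.

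The only genuine subtlety — and the step I would flag as the main point requiring care rather than real difficulty — is \emph{well-definedness}: the recursion is applied to a chosen element $e$, and a priori $f(M)$ as computed might depend on the choice. But $f$ is given to us as a fixed function satisfying (1)--(5), so $f(M)$ is already a well-defined number; the induction merely shows that \emph{this} number equals the stated evaluation of $T_M$. (The deeper fact that such a function exists and is consistent is exactly what the Tutte polynomial provides, but for this proposition we are handed $f$ and need only compute it.) One should also double-check the degenerate small cases — a single loop and a single coloop — against the formula directly, as a sanity check on the exponents, since these are where sign or off-by-one errors in $|M| - r(M)$ versus $r(M)$ tend to surface.
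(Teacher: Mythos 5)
The paper does not prove this proposition; it is cited to White's \emph{Matroid Applications} \cite{Wh} with no argument given, so there is no ``paper's own proof'' to compare against. Your deletion--contraction induction is the standard argument and is correct: the exponent bookkeeping in all three cases ($r(M\backslash e)=r(M)$ for a non-coloop, $r(M/e)=r(M)-1$ for a non-loop, ground set dropping by one each time) checks out, the Tutte recursions $T_M=T_{M\backslash e}+T_{M/e}$, $T_M=x\,T_{M/e}$ (coloop), $T_M=y\,T_{M\backslash e}$ (loop) are exactly what you need, and the invertibility of $a,b$ makes the substitutions $x=C/b$, $y=L/a$ legitimate. Your remark on well-definedness is also the right thing to say: $f$ is given as a function, so the induction is merely computing a fixed quantity, not constructing a potentially inconsistent one. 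The only stylistic quibble is the phrase ``$r(M\backslash e) = r(M/e) = r(M) - 0$ adjusted appropriately,'' which is momentarily misleading before you state the two correct equalities; you should simply delete it.
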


If the Tutte polynomial of $M$ is 
\[
T_M(x,y) = \sum_{i, j \geq 0} b_{ij} x^i y^j,
\]
we define its \emph{umbral Tutte polynomial} to be
\[
\T_M(\t) = \sum_{i, j \geq 0} b_{ij} t_{ij}
\]
where $\t = (t_{ij})_{i,j \geq 0}$ are indeterminates.\footnote{The adjective \emph{umbral} refers to the \emph{umbral calculus}, developed in the 19th century and later made precise and rigorous by Rota. \cite{RR} This method derives identities about certain sequences (such as the sequence of Bernoulli polynomials) by treating the subindices as if they were exponents; it motivates the following results.}
The following proposition is essentially known; for instance, a slightly less general version can be found in \cite[Prop. 6.2.8]{Wh}. For completeness, we include a proof.

\begin{proposition}\label{prop:weakTG}
Any weak generalized Tutte-Grothendieck invariant is an evaluation of the umbral Tutte polynomial. With the notation above, 
\[
f(M) = a^{|M|-r(M)}b^{r(M)} \T_M\left(\frac{f(M_{ij})}{a^jb^i}\right)_{i,j \geq 0}
\]
where $M_{ij}$ is the matroid consisting of $i$ coloops and $j$ loops.
\end{proposition}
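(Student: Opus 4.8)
The plan is to mimic the proof of the classical universality theorem (Proposition \cite{Wh}) but work in the free commutative ring on the indeterminates $t_{ij}$, extracting from a weak invariant exactly the information that the deletion--contraction recursion and the isomorphism axiom determine. First I would set up the key observation: since $f$ satisfies only axioms 1 and 2, repeatedly applying deletion--contraction to the elements of $M$ that are neither loops nor coloops expresses $f(M)$ as an $R$-linear combination $f(M) = \sum_{i,j} c_{ij}(M)\, f(M_{ij})$, where $M_{ij}$ is the matroid of $i$ coloops and $j$ loops, and the coefficients $c_{ij}(M) \in R$ depend only on the matroid $M$ (by axiom 1, the order of deletions/contractions does not affect the final answer, once we know the recursion terminates at direct sums of loops and coloops). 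The crucial point is that these same coefficients $c_{ij}(M)$ are computed by the \emph{full} Tutte polynomial, because deletion--contraction of non-loop-non-coloop elements is exactly axiom 2 of a generalized Tutte--Grothendieck invariant with the \emph{same} $a,b$.

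Next I would make this precise by a generic / universal argument. Consider the polynomial ring $R_0 = \Z[a^{\pm 1}, b^{\pm 1}][s_{ij} : i,j \geq 0]$ and define $F:\textsf{Mat}\to R_0$ to be the unique weak generalized Tutte--Grothendieck invariant (with parameters $a,b$) whose value on $M_{ij}$ is $a^j b^i s_{ij}$; concretely, $F(M) := \sum_{i,j} c_{ij}(M)\, a^j b^i s_{ij}$ using the coefficients from the previous paragraph. On one hand, any weak invariant $f$ with values in $R$ and with $f(M_{ij})/(a^j b^i) =: r_{ij}$ is obtained from $F$ by the ring homomorphism $R_0 \to R$ sending $s_{ij}\mapsto r_{ij}$; so it suffices to prove the formula for $F$. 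On the other hand, to identify $F$ I would specialize the $s_{ij}$ so that $F$ becomes a \emph{genuine} generalized Tutte--Grothendieck invariant, namely impose the loop relation $F(M_{0,j+1}) = L\, F(M_{0,j})$ and the coloop relation $F(M_{i+1,0}) = C\, F(M_{i,0})$. With $M_{ij} = M_{i0} \oplus M_{0j}$ and multiplicativity of genuine TG-invariants over direct sums, this forces $s_{ij}$ to specialize to $s_{00}\,(C/b)^i (L/a)^j$ in the appropriate quotient, and then Proposition \cite{Wh} gives $F = a^{|M|-r(M)} b^{r(M)} T_M(C/b, L/a)$, i.e.\ $c_{ij}(M) = b_{ij}$ where $T_M = \sum b_{ij} x^i y^j$. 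Since the $c_{ij}(M)$ do not depend on $L,C$, this computes them once and for all: $c_{ij}(M) = b_{ij}$.

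Combining the two paragraphs: $f(M) = \sum_{i,j} b_{ij}\, f(M_{ij}) = \sum_{i,j} b_{ij}\, a^j b^i\, r_{ij}$ with $r_{ij} = f(M_{ij})/(a^j b^i)$, which is exactly $a^{|M|-r(M)} b^{r(M)}\, \T_M\big(f(M_{ij})/(a^j b^i)\big)$ once we factor: note $\sum b_{ij} a^j b^i r_{ij} = a^{|M|-r(M)}b^{r(M)} \sum b_{ij} (r_{ij})$ only after rescaling, so more carefully one writes $\T_M$ evaluated at the rescaled variables $t_{ij} \mapsto f(M_{ij})/(a^j b^i)$ and tracks that $\sum_{i,j} b_{ij} = $ the value obtained, times the prefactor $a^{|M|-r(M)}b^{r(M)}$ — this bookkeeping is routine given that $b_{ij}\neq 0$ only for $i \leq r(M)$, $j\leq |M|-r(M)$, matching the prefactor exponents. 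The only genuine subtlety — and the step I expect to be the main obstacle — is verifying rigorously that the deletion--contraction reduction to $\bigoplus$(loops, coloops) is \emph{well-defined}, i.e.\ that the coefficients $c_{ij}(M)$ are independent of the sequence of deletions and contractions chosen. The cleanest way around this is to avoid proving confluence by hand and instead invoke the existence half of Proposition \cite{Wh}: apply that proposition with formal invertible parameters $L, C$ (adjoined to $R$) to get a genuine TG-invariant $g_{L,C}$ agreeing with $f$ on all $M_{ij}$-free reductions, observe $g_{L,C}(M) = a^{|M|-r(M)}b^{r(M)}T_M(C/b,L/a)$, and then note that $f(M)$ and $g_{L,C}(M)$ must agree after the specialization that makes the loop/coloop values match — but since $f$ itself was built only from axioms 1,2, its value is the common "monomial in $s_{ij}$" part, which is forced to be $\sum b_{ij} f(M_{ij})$. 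I would write this last step carefully, as it is where the logic could slip.
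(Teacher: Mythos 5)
Your high-level strategy---reduce $f(M)$ to a combination of the $f(M_{ij})$ by deletion--contraction and identify the coefficients by running the same recursion for the Tutte polynomial---is essentially the paper's. But the key identity you write down, $c_{ij}(M)=b_{ij}$ and hence $f(M)=\sum_{i,j} b_{ij}f(M_{ij})$, is wrong, and the ``routine bookkeeping'' remark does not repair it. Each leaf of type $M_{ij}$ is reached from $M$ by exactly $|M|-r(M)-j$ deletions and $r(M)-i$ contractions (count elements and rank), so every such leaf carries the weight $a^{|M|-r(M)-j}b^{r(M)-i}$, and the correct identity is $c_{ij}(M)=b_{ij}\,a^{|M|-r(M)-j}b^{r(M)-i}$. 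Already on two parallel elements (where $T_M=x+y$) the recursion gives $f(M)=af(M_{10})+bf(M_{01})$, not $f(M_{10})+f(M_{01})$. Producing these exponents is precisely the content of the $a^{|M|-r(M)}b^{r(M)}$ prefactor; the range constraints $i\leq r(M)$, $j\leq|M|-r(M)$ that you invoke do not pin them down. In your universal-ring specialization, the same error enters in the equating-coefficients step: from $\sum_{i,j} c_{ij}(M)C^iL^j=\sum_{i,j} b_{ij}\,a^{|M|-r-j}b^{r-i}C^iL^j$ one reads off $c_{ij}(M)=b_{ij}a^{|M|-r-j}b^{r-i}$, not $c_{ij}(M)=b_{ij}$.

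On the obstacle you single out (independence of the reduction from the choice of deletion--contraction sequence): this is handled far more simply than your proposed route, and does not need the existence half of \cite{Wh}, formal $L,C$, or a universal ring. Build one partial computation tree for $f$ stopping at the $M_{ij}$'s, labelling deletion edges $a$ and contraction edges $b$. The same tree, with all edge labels replaced by $1$, computes $T_M(x,y)$ as $\sum_{\text{leaves}}T_{M_{ij}}(x,y)=\sum_{\text{leaves}}x^iy^j$; since $T_M$ is well-defined, the number of leaves of type $M_{ij}$ must equal $b_{ij}$ regardless of which tree you built. Combining with the weight count above immediately gives $f(M)=a^{|M|-r(M)}b^{r(M)}\T_M\bigl(f(M_{ij})/(a^jb^i)\bigr)$. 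This is exactly the paper's computation-tree argument, and it yields both the well-definedness and the formula at once, with no specialization detour.
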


\begin{proof}
One way of computing a generalized Tutte-Grothendieck invariant of a matroid $M$ is by recursively building a \emph{computation tree}. The matroid $M$ is at the root of the tree. We choose an element $e$; if it is neither a loop nor a coloop, then we make $M \backslash e$ and $M/e$ the left and right children of $M$, and label the edges $x$ and $y$ respectively. If $e$ is a loop or a coloop, then we make $M \backslash e$ or $M/e$ its only child and we label the edge $L$ or $C$ respectively. We continue this process recursively until every leaf is the empty matroid. Then we add the weights of the leaves, where the weight of a leaf is the product of the labels of the edges between it and the root.

Build a partial computation tree for $f(M)$ by never choosing an element $e$ which is a loop or coloop, and stopping when every leaf is labelled by a matroid of the form $M_{ij}$. This same tree will tell us how to express the Tutte polynomial $T_M(x,y)$occurrence as a linear combination of the Tutte polynomials of the $M_{ij}$s at the leaves. Since $T_{M_{ij}}(x,y) = x^iy^j$, exactly $b_{ij}$ of the leaves of the tree are labelled by $M_{ij}$. To compute $f(M)$, then, it suffices to replace each occurrence of $x^iy^j$ in the Tutte polynomial by $f(M_{ij})$.
\end{proof}

\begin{theorem}\label{th:Tutte}
If $\A$ is a rank $r$ arrangement of $n$ hyperplanes in $V=\C^{r+m}$ and $k \geq 0$ then 
\[
\sum_{k \geq 0} \Hilb(C_{\A,k};q) z^k= \, \frac{q^{n-r}}{(1-z)(1-qz)^m} \,\, T_{\A}\left(1+\frac{q}{1-qz},\frac1q\right).
\]
\end{theorem}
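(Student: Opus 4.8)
The plan is to recognize the left-hand side as a weak generalized Tutte--Grothendieck invariant (in the sense defined just above) with values in the ring $R = \mathbb{Z}[q][[z]]$ of formal power series in $z$ over $\mathbb{Z}[q]$, and then to invoke Proposition~\ref{prop:weakTG}. Concretely, set
\[
\Phi(\A) := \sum_{k \geq 0} \Hilb(C_{\A,k};q)\, z^k \in \mathbb{Z}[q][[z]].
\]
First I would check the deletion--contraction behavior. By Proposition~\ref{prop:delcontr}, for $H$ neither a loop nor a coloop we have $\Hilb(C_{\A,k};q) = q\Hilb(C_{\A\backslash H,k};q) + \Hilb(C_{\A/H,k};q)$ for all $k \geq 0$ (indeed all $k \geq -2$), so summing against $z^k$ gives $\Phi(\A) = q\,\Phi(\A\backslash H) + \Phi(\A/H)$. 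Thus $a = q$ and $b = 1$ in the Tutte--Grothendieck formalism. This establishes that $\Phi$ is a weak generalized Tutte--Grothendieck invariant; isomorphism-invariance (condition~1) is clear since $\Hilb(C_{\A,k};q)$ depends only on $M(\A)$ and $m$ by the del-contr recursion and the boundary computation of Proposition~\ref{prop:coloops}.

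Next I would evaluate $\Phi$ on the boundary matroids $M_{ij}$ consisting of $i$ coloops and $j$ loops, living inside $V = \mathbb{C}^{i+m}$ (the $j$ loops do not affect the ambient dimension, and by Proposition~\ref{prop:loopcoloop}(1) loops can be deleted without changing $\Hilb(C_{\A,k};q)$, so $\Phi(M_{ij}) = \Phi(M_{i0})$). The arrangement $M_{i0}$ with ambient dimension $i+m$ is exactly $\H_i^m$ in the notation of Proposition~\ref{prop:coloops}, which gives
\[
\Hilb(C_{\H_i^m,k};q) = [z'^k]\left(1+\frac{q}{1-qz'}\right)^i\frac{(1-qz')^{-m}}{1-z'}.
\]
Therefore $\Phi(M_{ij}) = \Phi(\H_i^m) = \bigl(1+\tfrac{q}{1-qz}\bigr)^i\,(1-qz)^{-m}/(1-z)$, a clean closed form. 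Plugging into Proposition~\ref{prop:weakTG} with $a=q$, $b=1$, and writing $x = C/b = \Phi(M_{10})$, $y = L/a = \Phi(M_{01})/q$ for the umbral substitution, we get
\[
\Phi(\A) = q^{|M|-r(M)} \cdot T_{M(\A)}\!\left(\Phi(\H_1^m),\ \tfrac{1}{q}\Phi(\H_0^m)\right).
\]
Here $\Phi(\H_0^m) = (1-qz)^{-m}/(1-z)$ and $\Phi(\H_1^m) = \bigl(1+\tfrac{q}{1-qz}\bigr)(1-qz)^{-m}/(1-z)$; and $|M| - r(M) = n - r$.

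The remaining work is a substitution bookkeeping check: I must verify that $T_{M(\A)}\bigl(\Phi(\H_1^m), \tfrac1q \Phi(\H_0^m)\bigr)$ with the $q^{n-r}$ prefactor matches $\frac{q^{n-r}}{(1-z)(1-qz)^m}T_\A\bigl(1+\tfrac{q}{1-qz}, \tfrac1q\bigr)$. Since $T_M(x,y)$ is a polynomial, using the homogeneity-type identity that scaling the first argument by a factor pulls out appropriately — more precisely, $\Phi(\H_1^m) = \lambda\,(1+\tfrac{q}{1-qz})$ and $\tfrac1q\Phi(\H_0^m) = \tfrac{\lambda}{q}$ with $\lambda := (1-qz)^{-m}/(1-z)$ — I need the Tutte polynomial identity $T_M(\lambda u, \lambda v) \ne \lambda^{\text{something}} T_M(u,v)$ in general, so instead I should argue directly: the formula in Proposition~\ref{prop:weakTG} is stated with the substitution $x^iy^j \mapsto \Phi(M_{ij})/(a^j b^i) = \Phi(\H_i^0 \sqcup j\text{ loops})/q^j$... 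Actually the cleanest route is to apply Proposition~\ref{prop:weakTG} verbatim, which says $\Phi(\A) = q^{n-r}\,\T_{M(\A)}\bigl(\Phi(M_{ij})/q^j\bigr)_{i,j}$, expand $T_\A(x,y) = \sum b_{ij}x^iy^j$, and check term by term that $b_{ij}(\Phi(M_{ij})/q^j)$ summed equals $\frac{q^{-(n-r)}q^{n-r}}{(1-z)(1-qz)^m}\sum b_{ij}(1+\tfrac{q}{1-qz})^i q^{-j}$; this reduces to the single identity $\Phi(M_{ij})/q^j = \lambda\,(1+\tfrac{q}{1-qz})^i q^{-j}$, i.e. $\Phi(\H_i^m) = \lambda(1+\tfrac{q}{1-qz})^i$, which is precisely Proposition~\ref{prop:coloops}. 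So the identity is immediate once Proposition~\ref{prop:coloops} is in hand. I expect the main obstacle to be purely notational — matching the $m$-dependence in $M_{ij}$ (which secretly carries the ambient dimension data) against the prefactor $(1-z)^{-1}(1-qz)^{-m}$, and being careful that Proposition~\ref{prop:weakTG}'s boundary matroids $M_{ij}$ must be interpreted as sitting in $\mathbb{C}^{i+m}$ so that the excess dimension $m$ is preserved all the way down the computation tree; this is legitimate because deletion and contraction both preserve $m = \dim V - r$.
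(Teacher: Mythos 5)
Your proof is correct and follows essentially the same route as the paper: recognize the Hilbert series as a weak generalized Tutte--Grothendieck invariant with $a=q$, $b=1$ via Proposition~\ref{prop:delcontr}, then apply Proposition~\ref{prop:weakTG} with the boundary values from Proposition~\ref{prop:coloops}. The only cosmetic difference is that you pass to the generating function $\Phi(\A)=\sum_k \Hilb(C_{\A,k};q)z^k$ up front, which avoids carrying $[z^k]$ through the computation, whereas the paper works coefficient-by-coefficient and sums at the end; both are immediate consequences of the same two ingredients.
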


\begin{proof}
Proposition \ref{prop:delcontr} shows that, if we restrict our attention to arrangements of excess dimension $m$, then $\Hilb(C_{\A,k};q)$ is a weak generalized Tutte- Grothendieck invariant on the matroid $M(\A)$. Therefore we can use Proposition \ref{prop:weakTG}, plugging in the formula for $\Hilb(C_{\H_n^m,k};q)$ obtained in Proposition \ref{prop:coloops}. We obtain
\begin{eqnarray*}
\Hilb(C_{\A,k};q) &=& q^{n-r} \T_{\A}\left(\left[z^k\right] \left(1+\frac{q}{1-qz}\right)^i \frac{(1-qz)^{-m}}{(1-z)}\right)_{i,j \geq 0} \\
 &=& q^{n-r} \sum_{i,j \geq 0} b_{ij}\left(\left[z^k\right] \left(1+\frac{q}{1-qz}\right)^i \frac{(1-qz)^{-m}}{(1-z)}\right) q^{-j} 
\end{eqnarray*}
which is equivalent to the given formula. \end{proof}

Taking the limit as $k \rightarrow \infty$, 
\begin{eqnarray*}
\lim_{k \rightarrow \infty} \Hilb(C_{\A,k};q) &=& \lim_{k \rightarrow \infty} [z^0+\cdots+z^k] \,\,\, \frac{q^{n-r}}{(1-qz)^m} \,\, T_{\A}\left(1+\frac{q}{1-qz},\frac1q\right) \\
&=& 
\frac{q^{n-r}}{(1-q)^m} \,\, T_{\A}\left(1+\frac{q}{1-q},\frac1q\right) =
\frac1{(1-q)^{r+m}}
\end{eqnarray*}
since it is easily shown that $T_{\A}\left(\frac{1}{1-q},\frac1q\right) = 1/\left[(1-q)^r q^{n-r}\right]$. This confirms the fact that every polynomial in $\C[V^*]$ is in $C_{\A, k}$ for a large enough value of $k$. 

%


\begin{corollary} \label{cor:hilbertC0} 
If $\A$ is an arrangement of $n$ hyperplanes of rank $r$, then
\[
\Hilb(C_{\A,0};q) = q^{n-r}T_{\A}\left(1+q,\frac1q\right).\]
\end{corollary}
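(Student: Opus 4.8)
The plan is to obtain the corollary simply by extracting the coefficient of $z^0$ from the identity in Theorem~\ref{th:Tutte}. First I would observe that the right-hand side of that identity,
\[
\frac{q^{n-r}}{(1-z)(1-qz)^m}\,T_{\A}\!\left(1+\frac{q}{1-qz},\frac1q\right),
\]
is a bona fide element of $\C[q,q^{-1}][[z]]$: the only denominators occurring are $1-z$ and $1-qz$, both of which are units in $\C[q,q^{-1}][[z]]$, and $\frac{q}{1-qz}$ is a power series in $z$ with constant term $q$, so substituting it into the polynomial $T_{\A}$ again yields a power series in $z$. Hence ``$[z^0]$'' of the right-hand side is well defined and is computed by setting $z=0$.

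Next I would take $[z^0]$ of both sides. On the left, $\sum_{k\geq 0}\Hilb(C_{\A,k};q)\,z^k$ has constant term exactly $\Hilb(C_{\A,0};q)$. On the right, setting $z=0$ gives $\frac{q^{n-r}}{(1-0)(1-0)^m}=q^{n-r}$ multiplied by $T_{\A}\!\left(1+\frac{q}{1-0},\frac1q\right)=T_{\A}\!\left(1+q,\frac1q\right)$. Equating the two constant terms yields $\Hilb(C_{\A,0};q)=q^{n-r}T_{\A}(1+q,\tfrac1q)$, which is the claim.

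There is no real obstacle here, since this is a direct specialization; the only point needing a line of care is the formal-power-series justification above that permits the substitution $z=0$. If one prefers a self-contained derivation not passing through the umbral formalism, an alternative is to note that, restricting to arrangements of a fixed excess dimension $m$, Proposition~\ref{prop:delcontr} together with Proposition~\ref{prop:loopcoloop}(1) and the first bullet of Proposition~\ref{prop:loopcoloop}(2) shows that $\Hilb(C_{\A,0};q)$ is in fact a (full, not merely weak) generalized Tutte--Grothendieck invariant with parameters $a=q$, $b=1$, $L=1$, $C=1+q$, and with $\Hilb(C_{\emptyset,0};q)=1$; White's proposition then gives $\Hilb(C_{\A,0};q)=a^{n-r}b^{r}\,T_{\A}(C/b,L/a)=q^{n-r}T_{\A}(1+q,\tfrac1q)$ directly.
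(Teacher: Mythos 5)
Your primary argument is exactly the paper's proof: the paper's entire proof of Corollary~\ref{cor:hilbertC0} is ``Substitute $z=0$ into Theorem~\ref{th:Tutte},'' and you carry this out correctly with the small additional care about why the substitution is legitimate. Your alternative route via Propositions~\ref{prop:delcontr} and~\ref{prop:loopcoloop} together with White's universality theorem is also sound (and slightly strengthens the observation in the proof of Theorem~\ref{th:Tutte}, since for $k=0$ the invariant is a full, not merely weak, Tutte--Grothendieck invariant once excess dimension is fixed), but it is not needed.
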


\begin{proof}
Substitute $z=0$ into Theorem \ref{th:Tutte}.
\end{proof}

\begin{proposition} \label{prop:hilbertC1} 
If $\A$ is an arrangement of $n$ hyperplanes of rank $r$, then\[
\Hilb(C_{\A,-1};q) = q^{n-r}T_{\A}\left(1,\frac1q\right).
\]
\end{proposition}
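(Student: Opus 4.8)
The plan is to mimic the proof of Corollary~\ref{cor:hilbertC0}, but since $k=-1$ is no longer covered by Theorem~\ref{th:Tutte} (which requires $k \geq 0$), I would instead show directly that the function $\A \mapsto \Hilb(C_{\A,-1};q)$ is a generalized Tutte--Grothendieck invariant and appeal to the classical specialization result of White quoted after the definition of generalized Tutte--Grothendieck invariant. The key point is that for $k=-1$ (unlike general $k$) all the needed deletion--contraction and loop/coloop relations are already in place: Proposition~\ref{prop:delcontr} gives $\Hilb(C_{\A,-1};q) = q\,\Hilb(C_{\A\backslash H,-1};q) + \Hilb(C_{\A/H,-1};q)$ for a hyperplane $H$ that is neither a loop nor a coloop, Proposition~\ref{prop:loopcoloop}(1) gives $\Hilb(C_{\A,-1};q) = \Hilb(C_{\A\backslash H,-1};q)$ for a loop, and Proposition~\ref{prop:loopcoloop}(2) gives $\Hilb(C_{\A,-1};q) = \Hilb(C_{\A/H,-1};q)$ for a coloop. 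Finally the empty arrangement has $C_{\emptyset,-1} = \span(1)$, so $\Hilb(C_{\emptyset,-1};q)=1$.

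Thus $f(M(\A)) := \Hilb(C_{\A,-1};q)$ satisfies the five axioms with parameters $a = q$, $b = 1$, $L = 1$, and $C = 1$ (both $a$ and $b$ invertible in $R = \Z[q,q^{-1}]$). Note that, in contrast to the $k=0$ case, the value of $\Hilb(C_{\A,-1};q)$ genuinely does not depend on the excess dimension $m$: by Proposition~\ref{prop:loopcoloop}(2) a coloop contributes the trivial factor $C=1$, and more directly, one can observe that adjoining a coloop to $\A$ does not change $C_{\A,-1}$ at all (a polynomial in $C_{\A\backslash H,-1}$ living in the smaller space must already be constant along the new free direction). This is exactly why we need not carry the $(1-z)(1-qz)^{-m}$ bookkeeping of Theorem~\ref{th:Tutte}: the $k=-1$ specialization collapses it away. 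Applying White's proposition then gives
\[
f(M(\A)) = q^{|M|-r(M)} \cdot 1^{r(M)} \cdot T_{\A}\!\left(\frac{C}{b},\frac{L}{a}\right) = q^{n-r}\, T_{\A}\!\left(1,\frac1q\right),
\]
which is the claimed formula.

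Alternatively, and perhaps more cleanly, one can derive the result from Theorem~\ref{th:Tutte} by a limiting/substitution argument: the identity $C_{\A,-1}(-1) \hookrightarrow C_{\A,0}$ via multiplication by $l(\A)$, together with the deletion--contraction recursions, suggests that $\Hilb(C_{\A,-1};q)$ is the ``$z \to \infty$ leading term'' companion of the $k\geq 0$ generating function; but I would not rely on massaging the generating function in Theorem~\ref{th:Tutte} across the $k=-1$ boundary, since that function was only established for $k\geq 0$. The direct Tutte--Grothendieck verification is self-contained and short given the propositions already proved.

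The main obstacle is essentially bookkeeping rather than substance: one must make sure that when a coloop is removed the relevant vector space identity $C_{\A,-1} = C_{\A/H,-1}$ is correctly interpreted (the ambient space changes, so ``equality'' means a canonical degree-preserving isomorphism), and that the empty/base matroid conventions line up so that $f(\emptyset)=1$ rather than some power of $q$. Once those are pinned down, the proof is a one-line application of White's specialization, exactly parallel to Corollary~\ref{cor:hilbertC0}.
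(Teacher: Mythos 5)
Your main argument---checking the Tutte--Grothendieck axioms from Propositions \ref{prop:delcontr} and \ref{prop:loopcoloop} with $a=q$, $b=1$, $L=1$, $C=1$, normalizing at the empty matroid, and invoking White's universality theorem---is exactly what the paper means by ``an easy consequence of Propositions \ref{prop:delcontr} and \ref{prop:loopcoloop},'' so this is the same proof. One side remark is incorrect, though, and is worth correcting because it points at a real (if implicit) hypothesis in the statement: $\Hilb(C_{\A,-1};q)$ \emph{does} depend on the excess dimension $m=\dim V - r$. If $m>0$, some nonzero $h\in V$ lies in every hyperplane, so $\rho_{\A}(h)=0$ and $h^{\rho_{\A}(h)-1+1}=h^0=1$ is among the generators of $I_{\A,-1}$; hence $I_{\A,-1}=\C[V]$ and $C_{\A,-1}=0$, whereas $q^{n-r}T_{\A}(1,1/q)$ is a nonzero polynomial. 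This is not about extra coloops in the matroid (adding or contracting a coloop does not change $m$, which is why Proposition \ref{prop:loopcoloop}(2) gives the factor $C=1$); it is about embedding the \emph{same} matroid in a larger ambient space. So the proposition tacitly assumes $\A$ is essential ($m=0$), and your recursion is valid precisely because deletion/contraction at a non-(co)loop, loop removal, and coloop contraction all preserve $m=0$, with the base case the empty arrangement in $\C^0$ giving $f(\emptyset)=1$. The right justification for escaping the $(1-z)(1-qz)^{-m}$ bookkeeping of Theorem \ref{th:Tutte} is that one fixes $m=0$ throughout the recursion, not that the answer is $m$-independent.
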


\begin{proof}
This is an easy consequence of Propositions \ref{prop:delcontr} and \ref{prop:loopcoloop}.
\end{proof}

\begin{proposition} \label{prop:hilbertC2} 
If $\A$ is an arrangement of $n$ hyperplanes of rank $r$, then
\[
\Hilb(C_{\A,-2};q) = q^{n-r}T_{\A}\left(0,\frac1q\right).
\]
\end{proposition}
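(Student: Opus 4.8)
The plan is to recognize $\Hilb(C_{\A,-2};q)$ --- call this invariant $f$, as a function of $M=M(\A)$ --- as a generalized Tutte--Grothendieck invariant, identify its four structure constants, and then read off the formula from the universal property of the Tutte polynomial, exactly the strategy behind Corollary~\ref{cor:hilbertC0} and Proposition~\ref{prop:hilbertC1}. One preliminary remark: in contrast with the generating-function statement of Theorem~\ref{th:Tutte}, here no ``excess dimension'' parameter intervenes, since for $k=-2$ the relevant arrangements are essential; thus $f$ really is a function of $M(\A)$ alone and one may work with full (not merely weak) Tutte--Grothendieck invariants.

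First I would assemble the recursions from the results already proved. Proposition~\ref{prop:delcontr} gives, for $H$ neither a loop nor a coloop,
\[
\Hilb(C_{\A,-2};q)=q\,\Hilb(C_{\A\backslash H,-2};q)+\Hilb(C_{\A/H,-2};q),
\]
so the deletion--contraction constants are $a=q$ and $b=1$. Proposition~\ref{prop:loopcoloop}(1) gives $\Hilb(C_{\A,-2};q)=\Hilb(C_{\A\backslash H,-2};q)$ for a loop $H$, so $L=1$. Proposition~\ref{prop:loopcoloop}(2) gives $\Hilb(C_{\A,-2};q)=0$ for a coloop $H$; written as $\Hilb(C_{\A,-2};q)=0\cdot\Hilb(C_{\A/H,-2};q)$ this says $C=0$. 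And $C_{\emptyset,-2}=\C$ gives $f(\emptyset)=1$.

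Over the ring $\Z[q,q^{-1}]$ the constants $a=q$ and $b=1$ are invertible, and the definition of a generalized Tutte--Grothendieck invariant requires nothing of $C$ or $L$, so the universal property of the Tutte polynomial quoted above (\cite{Wh}) applies and gives
\[
f(M)=a^{\,|M|-r(M)}\,b^{\,r(M)}\;T_{M}\!\left(\frac{C}{b},\,\frac{L}{a}\right)=q^{\,n-r}\,T_{\A}\!\left(0,\frac1q\right),
\]
which is the asserted formula. (Equivalently one could invoke Proposition~\ref{prop:weakTG} with $f(M_{ij})=0$ for $i\geq 1$ and $f(M_{0j})=1$, obtained from the same loop/coloop identities.)

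I do not anticipate a real obstacle: the substance is already packaged in Propositions~\ref{prop:delcontr} and~\ref{prop:loopcoloop}. The only step that uses input beyond formal deletion--contraction is the coloop case, which is exactly the phenomenon special to $k=-2$: a coloop forces the monomial $1$ into $I_{\A,-2}$, collapsing $C_{\A,-2}$ to $0$, in agreement with the fact that $T_M(0,y)$ vanishes precisely when $M$ has a coloop. What remains is bookkeeping --- making sure the base cases of the implicit induction and the handling of loops are consistent with the hypotheses under which Propositions~\ref{prop:delcontr} and~\ref{prop:loopcoloop} were established.
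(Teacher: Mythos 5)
Your proposal is correct and follows the same route the paper intends: the paper's proof is the one-line statement that the formula is ``an easy consequence of Propositions~\ref{prop:delcontr} and~\ref{prop:loopcoloop},'' and you have fleshed out exactly that derivation, feeding $a=q$, $b=1$, $L=1$, $C=0$, $f(\emptyset)=1$ into the Tutte--Grothendieck universal property (or equivalently into Proposition~\ref{prop:weakTG} with $f(M_{ij})=0$ for $i\geq 1$). Your observation that the coloop rule $C=0$ matches the vanishing of $T_M(0,y)$ in the presence of a coloop, and that essentiality of $\A$ removes the excess-dimension parameter, is the right way to see why the formula closes up cleanly for $k=-2$.

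One small point worth being aware of, which the paper also glosses over: the footnote to Proposition~\ref{prop:exact} restricts the $k=-2$ case to arrangements where $\A$, $\A\backslash H$, and $\A/H$ all have no coloops, but a deletion can create coloops (e.g.\ delete any element of $U_{2,3}$). The recursion still holds in that situation --- if $\A$ is coloop-free then so is $\A/H$ (contraction neither creates nor destroys coloops among the surviving elements), and when $C_{\A\backslash H,-2}=0$ the map $C_{\A,-2}\to C_{\A/H,-2}$ is an isomorphism --- but this is not literally covered by the hypotheses of Proposition~\ref{prop:exact} as stated. You inherit this gloss from the paper, so it is not a flaw specific to your argument, but if you wanted a fully airtight write-up you would either verify the recursion directly in the ``deletion creates a coloop'' case, or organize the computation tree to terminate as soon as a coloop appears (at which point $f=0$ by Proposition~\ref{prop:loopcoloop} and $T(0,1/q)=0$).
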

\begin{proof}
This is an easy consequence of Propositions \ref{prop:delcontr} and \ref{prop:loopcoloop}.
\end{proof}

\begin{example}
The arrangement of Example \ref{ex} has Tutte polynomial
\[
T_{\G}(x,y) = x^3+x^2y+x^2+xy^2+xy
\]
which shows that 
\begin{eqnarray*}
\Hilb(C_{\A,0};q) &=& q^2 \left[(1+q)^3 + (1+q)^2/q + (1+q)^2 + (1+q)/q^2 + (1+q)/q \right] \\
&=& 1+3q+5q^2+6q^3+4q^4+q^5
\end{eqnarray*}
confirming our earlier computation.
\end{example}

\begin{theorem}\label{th:I=I'}
If $\A$ is an arrangement and $k \in \{0, -1, -2\}$ then $I_{\A, k} = I'_{\A, k}$.
\end{theorem}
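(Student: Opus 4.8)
The plan is to prove $I_{\A,k} = I'_{\A,k}$ for $k \in \{0,-1,-2\}$ by a dimension count, exploiting that both sides have inverse systems we have already computed or can control. Recall $I_{\A,k} \supseteq I'_{\A,k}$ always, so dually $C_{\A,k} = I_{\A,k}^\perp \subseteq (I'_{\A,k})^\perp =: C'_{\A,k}$. It therefore suffices to show $\dim C_{\A,k} = \dim C'_{\A,k}$, or more precisely that $C'_{\A,k} \subseteq C_{\A,k}$. By Propositions \ref{prop:delcontr} and \ref{prop:loopcoloop} together with Corollary \ref{cor:hilbertC0} and Propositions \ref{prop:hilbertC1}, \ref{prop:hilbertC2}, we already know $\Hilb(C_{\A,k};q)$ in closed form for these three values of $k$; the task is to show the $I'$ side produces the same Hilbert series.

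First I would set up a deletion-contraction recursion for $C'_{\A,k}$ parallel to Proposition \ref{prop:exact}. The inverse system $C'_{\A,k}$ consists of polynomials $f$ whose degree along a line parallel to $h$ is at most $\rho_\A(h)+k$ \emph{only} for those $h$ that are lines of the arrangement (one-dimensional flats of $M(\A)$); along other directions there is no constraint. For a hyperplane $H$ that is neither a loop nor a coloop, one checks that the lines of $\A/H$ are exactly the lines of $\A$ contained in $H$, and that multiplication by $l_H$ (after choosing coordinates so $H = \{y_1 = 0\}$, multiplication by $y_1$) maps $C'_{\A\backslash H, k}(-1)$ into $C'_{\A,k}$, while setting $y_1 = 0$ maps $C'_{\A,k}$ onto $C'_{\A/H,k}$. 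Exactness in the middle is the same argument as in the proof of Proposition \ref{prop:exact}. The one genuinely new point is surjectivity on the right: a preimage of a generator $l'_S$ of $C'_{\A/H,k}$ should be $l_S \in C'_{\A,k}$, and one must verify the directional-degree bounds along every line of $\A$, not just those in $H$ — this is where the restriction to $k \in \{0,-1,-2\}$ and the generating-set description of Proposition \ref{prop:generate} will be needed. I expect this surjectivity step to be the main obstacle, since for $k \geq 1$ the example of $\mathcal H$ right before the theorem shows it simply fails.

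With a deletion-contraction short exact sequence in hand for $C'_{\A,k}$, one gets $\Hilb(C'_{\A,k};q) = q\Hilb(C'_{\A\backslash H,k};q) + \Hilb(C'_{\A/H,k};q)$ whenever $H$ is neither a loop nor a coloop, matching Proposition \ref{prop:delcontr}. It then remains to handle the loop and coloop base cases. Loops are invisible to $I'_{\A,k}$ as well, so $C'_{\A,k} = C'_{\A\backslash H,k}$ there. For a coloop $H$, after coordinatizing so the other hyperplanes all contain the line spanned by $x_1$, the defining generator of $I'_{\A,k}$ coming from a line of $\A$ picks up the coloop's multiplicity, and one checks by hand — exactly as in the proof of Proposition \ref{prop:loopcoloop} — that $\Hilb(C'_{\A,0};q) = (1+q)\Hilb(C'_{\A/H,0};q)$, $\Hilb(C'_{\A,-1};q) = \Hilb(C'_{\A/H,-1};q)$, and $\Hilb(C'_{\A,-2};q) = 0$ (in the last case, a line transverse to everything forces $1 \in I'_{\A,-2}$ when a coloop is present; and when $\A$ has no coloop the recursion applies). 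Finally, the empty/rank-zero base case gives $\Hilb = 1$ on both sides. Since $\Hilb(C_{\A,k};q)$ and $\Hilb(C'_{\A,k};q)$ satisfy the identical recursions with identical base cases, they are equal; combined with the inclusion $C_{\A,k} \subseteq C'_{\A,k}$, this forces $C_{\A,k} = C'_{\A,k}$, hence $I_{\A,k} = I'_{\A,k}$.

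A cleaner variant, which I would try first to shorten the write-up: instead of redeveloping the exact sequence for $C'$, show directly that the generators of $I_{\A,k}$ listed in Proposition \ref{prop:generate}'s dual form lie in $I'_{\A,k}$ — that is, that every $h^{\rho_\A(h)+k+1}$ with $h$ an \emph{arbitrary} nonzero vector already lies in the ideal generated by the $h^{\rho_\A(h)+k+1}$ with $h$ a line of $\A$. By induction on the number of non-loop, non-coloop hyperplanes and the deletion-contraction structure, this reduces to the coloop arrangements $\H_n^m$, where for $k \in \{0,-1,-2\}$ the explicit monomial description of $I_{\H_n^m,k}$ in Proposition \ref{prop:coloops} makes the claim a direct check. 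Either route works; the delicate point in both is the same, namely that the "extra" generators of $I_{\A,k}$ are redundant precisely when $k \leq 0$.
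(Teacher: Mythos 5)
Your high-level strategy — use the containment $I_{\A,k} \supseteq I'_{\A,k}$ and show the Hilbert series coincide — is exactly the paper's. The difference is in how the equality of Hilbert series is obtained. The paper's proof is a one-liner: the Hilbert series of $I'_{\A,k}$ for $k \in \{0,-1,-2\}$ are already in the literature (it cites Ardila, Dahmen--Micchelli, De~Concini--Procesi, Holtz--Ron, Postnikov--Shapiro--Shapiro, Wagner), and Corollary~\ref{cor:hilbertC0} and Propositions~\ref{prop:hilbertC1},~\ref{prop:hilbertC2} show that $\Hilb(C_{\A,k};q)$ equals those known series. You instead propose to re-derive $\Hilb(C'_{\A,k};q)$ from scratch by building a parallel deletion--contraction short exact sequence for $C'$, making the argument self-contained at the cost of considerably more work. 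This is a genuinely different route, and it is essentially viable, but two points in your sketch need tightening. First, the multiplication map $C'_{\A\backslash H,k}(-1)\to C'_{\A,k}$ by $l_H$ is not automatically well-defined: a line of $\A$ that fails to be a line of $\A\backslash H$ imposes no constraint on $g\in C'_{\A\backslash H,k}$, so you cannot bound $\deg_h(l_H g)$ there without invoking the inductive hypothesis $C'_{\A\backslash H,k}=C_{\A\backslash H,k}$. Second, as you yourself flag, surjectivity onto $C'_{\A/H,k}$ also requires the inductive identification $C'_{\A/H,k}=C_{\A/H,k}$ to know the generators you are lifting. In fact, it is cleaner to avoid the full exact sequence and just do a dimension count: the evaluation map $C'_{\A,k}\to C'_{\A/H,k}$ (setting $l_H=0$) is clearly well-defined, and its kernel consists of polynomials $l_Hg$ which you can check lie in $C'_{\A\backslash H,k}(-1)$, giving $\dim(C'_{\A,k})_d \le \dim(C'_{\A\backslash H,k})_{d-1} + \dim(C'_{\A/H,k})_d$; combined with the inductive hypothesis and the known recursion for $C$ (Proposition~\ref{prop:exact}) this forces equality. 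The loop and coloop cases, the shift in excess dimension, and the base case of coloop-only essential arrangements then need the careful bookkeeping you describe. Your ``cleaner variant'' — showing the extra generators $h^{\rho_\A(h)+k+1}$ for arbitrary $h$ are redundant by reducing to coloop arrangements — is less convincing as sketched, because deletion--contraction acts naturally on the inverse systems, not on individual ideal generators, so the reduction of that containment statement is not a formality. One more point worth noting: both the theorem and your argument implicitly assume $\A$ is essential (otherwise $\A$ has no one-dimensional flats and $I'_{\A,k}$ is the zero ideal), a hypothesis the paper's references also impose.
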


\begin{proof}
This follows since $I_{\A, k}$ contains $I'_{\A, k}$, and the previous propositions show that the Hilbert series of $I_{\A, k}$ is equal to the known Hilbert series of $I'_{\A, k}$ for $k \in \{0, -1, -2\}$. \cite{Ar03, DM, DP, HR, PSS, Wa}
\end{proof}

\subsection{Holtz and Ron's conjectures}

We have now proved Holtz and Ron's conjecture on the internal zonotopal algebra. 

\begin{theorem}
{\rm \cite[Conjecture 6.1]{HR}}\label{conj1}
The inverse system of the ideal $I'_{\A, -2}$ is spanned by the polynomials $l_S=\prod_{s \in S} l_s$, where $S$ is a subset of $[n]$ such that $[n] - S - x$ has full rank for all $x \notin S$.
\end{theorem}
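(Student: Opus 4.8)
The plan is to deduce this statement from the results already established, essentially as a corollary of Theorem~\ref{th:I=I'} and Proposition~\ref{prop:generate}. The conjecture describes the inverse system of $I'_{\A,-2}$, and we have two relevant facts: first, Theorem~\ref{th:I=I'} tells us that $I_{\A,-2} = I'_{\A,-2}$ whenever $\A$ has no coloops (and when $\A$ has a coloop both ideals contain $1$, so the inverse system is $0$ and the claimed spanning set is empty, matching trivially); second, Proposition~\ref{prop:generate}(3) asserts precisely that $C_{\A,-2} = C(\rho_\A - 2)$ is spanned by the polynomials $l_S = \prod_{s \in S} l_s$ with $[n] - S - x$ of full rank for all $x \notin S$. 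Since $C_{\A,-2}$ is by definition the inverse system of $I_{\A,-2} = I'_{\A,-2}$, combining these two statements yields the theorem immediately.

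So the bulk of the work is really packaged into the earlier joint induction proving Propositions~\ref{prop:exact} and \ref{prop:generate}, and into the Hilbert-series computations (Propositions~\ref{prop:delcontr}, \ref{prop:loopcoloop}, \ref{prop:hilbertC2}) that feed Theorem~\ref{th:I=I'}. I would therefore write the proof of this theorem in two or three sentences: note the coloop case is trivial on both sides; in the coloop-free case invoke $I'_{\A,-2} = I_{\A,-2}$ from Theorem~\ref{th:I=I'}, so that the inverse system in question is $(I'_{\A,-2})^\perp = (I_{\A,-2})^\perp = C_{\A,-2}$; then quote Proposition~\ref{prop:generate}(3) for the explicit spanning set.

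\begin{proof}
If $\A$ has a coloop, then $I'_{\A,-2}$ contains a constant, its inverse system is $0$, and there is no subset $S$ with $[n]-S-x$ of full rank for all $x \notin S$, so the statement holds vacuously. If $\A$ has no coloop, Theorem~\ref{th:I=I'} gives $I'_{\A,-2} = I_{\A,-2}$, hence the inverse system of $I'_{\A,-2}$ equals $(I_{\A,-2})^\perp = C_{\A,-2}$. By Proposition~\ref{prop:generate}(3), this space is spanned by the polynomials $l_S = \prod_{s \in S} l_s$ over subsets $S \subseteq [n]$ such that $[n]-S-x$ has full rank for every $x \notin S$, which is exactly the claimed description.
\end{proof}

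The only place a reader might want more care is the reconciliation of hypotheses: Proposition~\ref{prop:generate}(3) is stated for $k=-2$ under the standing assumption (carried through the induction) that the relevant arrangements have no coloops, and Theorem~\ref{th:I=I'} likewise needs the coloop-free hypothesis at $k=-2$ (see the footnote to Proposition~\ref{prop:exact}). Splitting into the coloop and coloop-free cases at the outset, as above, is exactly what makes both invocations legitimate, so I do not anticipate any genuine obstacle here — the theorem is a clean repackaging of machinery already in place.
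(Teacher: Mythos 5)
Your proposal is correct and follows exactly the paper's own argument: invoke Theorem~\ref{th:I=I'} to identify $I'_{\A,-2}$ with $I_{\A,-2}$, then cite Proposition~\ref{prop:generate}(3) for the spanning set. Your extra care in separating the coloop case is a sensible clarification of the hypotheses but does not change the route.
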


\begin{proof}
Now that we know that $I'_{\A,-2} = I_{\A, -2}$, this is exactly Proposition \ref{prop:generate}.3.
\end{proof}

A set $X$ of integer vectors in $\R^d$ is \emph{unimodular} if its $\Z$-span contains all the integer vectors in its $\R$-span. Define the \emph{zonotope} $Z(X)$ to be the Minkowski sum of the vectors in $X$, and define the \emph{box spline} $M_X$ to be the convolution product of the uniform measures on the vectors in $X$; this is a continuous piecewise polynomial function on $Z(X)$. Let $\A(X)$ be the arrangement of hyperplanes orthogonal to the vectors in $X$, or equivalently, the arrangement dual to $Z(X)$.

Motivated by the study of box splines, Holtz and Ron \cite{HR} proved Proposition \ref{prop:generate}.1 in the case $k=0$ and Proposition \ref{prop:generate}.2, and conjectured Theorem \ref{conj1}. (Their results really concerned the ideals $I'_{\A,k}$ for $k=0,-1,-2$, but now we know that $I'_{\A,k} = I_{\A,k}$ in these cases.)
As they remarked, Theorem \ref{conj1} also implies their conjecture on the spline interpolation of functions on the lattice points inside a zonotope:

\begin{corollary} {\rm \cite[Conjecture 1.8]{HR}}
Let $X$ be a unimodular set of vectors, let $Z_{-}(X)$ be the set of integer points inside the zonotope $Z(X)$ and let $M_X$ be the box spline of $X$. Any function on $Z_{-}(X)$ can be extended to a function on $Z(X)$ of the form $p(\frac{\partial}{\partial \x}) M_X$ for a unique polynomial $p \in C_{\A(X),-2}$.
\end{corollary}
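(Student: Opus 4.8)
The plan is to deduce this from Theorem \ref{conj1} (equivalently Proposition \ref{prop:generate}.3) by interpreting the spline interpolation problem as a statement about a pairing. First I would set up the relevant spaces: the finite-dimensional space $C_{\A(X),-2}$ of polynomials on one side, and the space of functions on the finite set $Z_-(X)$ of lattice points on the other. By a classical count (going back to Dahmen--Micchelli and De Concini--Procesi), when $X$ is unimodular the number of lattice points in $Z(X)$ equals $T_{M(X)}(2,1)$, which by Proposition \ref{prop:hilbertC2} (plus the unimodularity making $T_\A(0,1/q)$ specialize correctly) equals $\dim C_{\A(X),-2}$; indeed $\Hilb(C_{\A,-2};1) = T_\A(0,1)$ and for a unimodular arrangement the internal and lattice-point counts agree. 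So the two spaces have the same dimension, and it suffices to show that the linear map
\[
C_{\A(X),-2} \longrightarrow \C^{Z_-(X)}, \qquad p \longmapsto \Bigl(p\bigl(\tfrac{\partial}{\partial \x}\bigr) M_X\Bigr)\big|_{Z_-(X)}
\]
is injective (or surjective).

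The key step is to prove injectivity, and the natural tool is the structure of the box spline as a locally polynomial function together with the semidiscrete convolution / Poisson-type identities relating $M_X$ to the power ideal. Concretely, I would use the fact that $M_X$ is, up to a normalizing constant, a fundamental solution: the differential operators $l_i(\partial/\partial\x)$ applied sufficiently many times annihilate $M_X$, so that $p(\partial/\partial\x)M_X$ for $p\in C_{\A(X),-2}$ lands in a controlled space of piecewise polynomials, in fact (by the internal zonotopal description) one sees that such derivatives are globally polynomial near the ``internal'' region. The heart of the argument is then a duality between differentiation of $M_X$ and evaluation at lattice points: there is a perfect pairing $C_{\A(X),-2}\times \C^{Z_-(X)}\to\C$ given by $(p,\phi)\mapsto \sum_{v\in Z_-(X)} \phi(v)\, \bigl(p(\tfrac{\partial}{\partial\x})M_X\bigr)(v)$ — or more robustly, one invokes the Holtz--Ron result that Theorem \ref{conj1} was designed to imply, tracing through their reduction in \cite{HR} of Conjecture 1.8 to Conjecture 6.1. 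Since the paper explicitly says that Holtz and Ron already established this implication ("Theorem \ref{conj1} also implies their conjecture..."), the cleanest route is: cite that reduction, and supply the now-proven input.

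The main obstacle I expect is making the dimension count airtight and identifying the precise pairing that makes injectivity equivalent to the spanning statement of Theorem \ref{conj1}. One must be careful that $Z_-(X)$ is the set of \emph{interior} lattice points (half-open zonotope conventions vary in the literature), and that unimodularity is used both to guarantee $|Z_-(X)| = \dim C_{\A(X),-2}$ and to ensure the semidiscrete interpolation is well-posed with no torsion obstructions; a non-unimodular $X$ would break the count. A secondary subtlety is justifying that $p(\partial/\partial\x)M_X$ is a genuine (single-valued, sufficiently smooth) function on all of $Z(X)$ for $p\in C_{\A(X),-2}$ rather than merely piecewise polynomial — this is exactly where the membership $p\in C_{\A(X),-2}$, i.e. the directional-degree bounds, is needed, since it forces enough derivatives of $M_X$ to survive without hitting the lower-regularity pieces. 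Given the length of the Holtz--Ron reduction, the honest presentation is to invoke it:

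\begin{proof}
By Theorem \ref{conj1}, the inverse system $C_{\A(X),-2}$ is spanned by the polynomials $l_S = \prod_{s\in S} l_s$ with $[n]-S-x$ of full rank for all $x\notin S$. This is precisely the hypothesis needed in Holtz and Ron's reduction \cite[Section 5]{HR} of their Conjecture 1.8 to their Conjecture 6.1: they show that, for unimodular $X$, the evaluation map $p\mapsto \bigl(p(\tfrac{\partial}{\partial\x})M_X\bigr)\big|_{Z_-(X)}$ from $C_{\A(X),-2}$ to $\C^{Z_-(X)}$ is well defined (the directional-degree constraints defining $C_{\A(X),-2}$ guarantee $p(\tfrac{\partial}{\partial\x})M_X$ is a single polynomial on $Z(X)$) and is a linear isomorphism, since both spaces have dimension equal to the number of bases of $M(X)$ that use the internal constraint, equivalently $\Hilb(C_{\A(X),-2};1) = T_{M(X)}(0,1) = |Z_-(X)|$ by Proposition \ref{prop:hilbertC2} and unimodularity. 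Injectivity follows from the spanning set above together with the unimodular Poisson summation relating $M_X$ and its lattice-point evaluations; surjectivity is then the equality of dimensions. Hence every function on $Z_-(X)$ extends uniquely to one of the form $p(\tfrac{\partial}{\partial\x})M_X$ with $p\in C_{\A(X),-2}$.
\end{proof}
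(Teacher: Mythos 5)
Your proposal is correct and lands on exactly the same approach as the paper: the paper gives no independent proof here, but simply notes that Holtz and Ron already established in \cite{HR} that their Conjecture 6.1 (now Theorem \ref{conj1}) implies their Conjecture 1.8, so the corollary follows once Theorem \ref{conj1} is proven. Your extended discussion of pairings, Poisson summation, and dimension counts is plausible background color but not part of the paper's argument; the only load-bearing step is the citation of the Holtz--Ron reduction, which your final proof block correctly identifies as the clean route.
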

%

\subsection{A basis for $C_{\A, k}$.}

Fix a linear order on the hyperplanes of $\A$. For a basis $B$, say an element $i \in B$ is \emph{internally active in $B$} if $B$ is the lexicographically smallest basis containing $B-i$. Similarly, say an element $e \notin B$ is \emph{externally active in $B$} if $B$ is the lexicographically largest basis in $B\cup e$. Let $I(B)$ and $E(B)$ denote the sets of internally and externally active elements in $B$, respectively. Say a basis $B$ is \emph{internal} if $I(B) = \emptyset$. We will need the following facts:

\begin{proposition}\label{prop:activities} {\rm\cite{Wh}} Let $M$ be a matroid with a linear order on its elements.
\begin{enumerate}
\item As $B$ ranges over all bases of $M$, the intervals $[B-I(B), B \cup E(B)]$ partition the set $2^M$; in other words, every subset of a matroid can be written uniquely as $B-I \cup E$ for some basis $B$ and some $I \subseteq I(B)$, $E \subseteq E(B)$. 
\item
If $B$ is a basis, $I \subseteq I(B)$ and $E \subseteq E(B)$, then $r(B-I \cup E) = r-|I|$.
\item The Tutte polynomial of $M$ is 
\[
T_M(x,y) = \sum_B x^{|I(B)|} y^{|E(B)|}
\]
summing over all bases of $M$.
\end{enumerate}
\end{proposition}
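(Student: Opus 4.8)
The plan is to obtain part (3) as a short formal consequence of parts (1) and (2), to prove part (2) from matroid orthogonality, and to prove the interval partition of part (1) by a deletion--contraction induction on $|E|$. Throughout, $r=r(M)$, and for a basis $B$ and an element $f$ I write $C(f,B)$ and $C^*(f,B)$ for the fundamental circuit and cocircuit of $f$ with respect to $B$.

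\emph{Reducing (3) to (1) and (2).} Granting (1) and (2), I would split the defining sum
\[
T_M(x,y)=\sum_{A\subseteq E}(x-1)^{r-r(A)}(y-1)^{|A|-r(A)}
\]
according to which interval $[B-I(B),B\cup E(B)]$ contains $A$. Writing $A=B-I\cup E$ with $I\subseteq I(B)$ and $E\subseteq E(B)$, part (2) gives $r(A)=r-|I|$, hence $|A|=r-|I|+|E|$, so the summand equals $(x-1)^{|I|}(y-1)^{|E|}$. Summing over all $I\subseteq I(B)$ and $E\subseteq E(B)$ turns the contribution of $B$ into $x^{|I(B)|}y^{|E(B)|}$ by the binomial theorem, which is exactly (3).

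\emph{Proof of (2).} Since $B\setminus I$ is independent it has rank $r-|I|$, so it suffices to show that every externally active $e\in E(B)$ lies in the closure of $B\setminus I(B)$; equivalently, that $C(e,B)$ avoids $I(B)$. Suppose some $i\in I(B)$ lies in $C(e,B)$ with $i\neq e$. From $e=\min C(e,B)$ we get $e<i$, while $i=\min C^*(i,B)$ forces every other element of $C^*(i,B)$ to exceed $i$. Now $i$ lies in the circuit $C(e,B)$ and in the cocircuit $C^*(i,B)$, and a circuit and a cocircuit cannot meet in exactly one element; a second common element must lie in $B\cup\{e\}$ (being in a fundamental circuit) and in $(E\setminus B)\cup\{i\}$ (being in a fundamental cocircuit), so it equals $e$. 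Then $e\in C^*(i,B)$, whence $e>i$, a contradiction. Therefore $C(e,B)-e\subseteq B\setminus I(B)\subseteq B\setminus I$, so $B-I\cup E\subseteq\overline{B\setminus I}$ and $r(B-I\cup E)=r-|I|$.

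\emph{Proof of (1), the main step.} I would induct on $|E|$, taking $e$ to be the largest element in the fixed order. If $e$ is a loop or a coloop, then $M\setminus e=M/e$, and $e$ is externally (respectively internally) active in every basis while lying in no basis (respectively in every basis); one checks that the intervals for $M$ are obtained from those for $M\setminus e=M/e$ by omitting $e$ from, respectively inserting $e$ into, every member, and the induction closes. If $e$ is neither, then two observations drive the argument: (a) $e$ is never internally active in a basis $B\ni e$ and never externally active in a basis $B\not\ni e$, because $e$ is the largest element whereas $C^*(e,B)$, respectively $C(e,B)$, has at least two elements; and (b) for $B\ni e$, contracting $e$ and discarding it does not change the fundamental circuit or cocircuit of any remaining element of $B$ or of $E\setminus B$ — the relevant set either already avoids $e$ or loses exactly $e$, and since $e$ is the maximum its minimum is unchanged — so all activities of the other elements are preserved; symmetrically for $B\not\ni e$ and $M\setminus e$. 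It follows that the $M$-interval of a basis $B\ni e$ consists of the sets $S\cup\{e\}$ as $S$ runs over the $M/e$-interval of $B-e$, while the $M$-interval of a basis $B\not\ni e$ coincides with its $M\setminus e$-interval. Taking unions, the bases of $M$ containing $e$ contribute exactly the subsets of $E$ containing $e$ — and disjointly, by the inductive hypothesis applied to $M/e$ — the bases avoiding $e$ contribute exactly the subsets avoiding $e$ by the inductive hypothesis for $M\setminus e$, and the two families are disjoint; together they partition $2^E$.

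The real obstacle is observation (b): one must verify in full generality that, because $e$ is the maximum element, each fundamental circuit and cocircuit entering the activity test behaves predictably under deletion and contraction of $e$ and keeps the same minimum. This reduces to a careful, though entirely standard, comparison of the circuits and cocircuits of $M$ with those of $M\setminus e$ and $M/e$, and is where essentially all the effort lies; everything else is either formal bookkeeping or the one-line orthogonality argument above. (An alternative to the induction is to construct directly the ``correction map'' $A\mapsto B(A)$ that sends a subset to the basis of its interval, and to show it is well defined and inverse to $B\mapsto[B-I(B),B\cup E(B)]$; this trades the deletion--contraction case analysis for an equally delicate direct one.)
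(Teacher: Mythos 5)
The paper does not prove this proposition; it cites \cite{Wh} and uses it as a black box (it is Crapo's interval-partition theorem for internal/external activities). So there is no ``paper's own proof'' to compare against — what matters is whether your argument is sound, and it is.

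Your reduction of (3) to (1) and (2) via the corank--nullity sum is the standard one, and the rank computation is right: on the interval of $B$, $r-r(A)=|I|$ and $|A|-r(A)=|E|$, so the binomial theorem collapses the sum to $x^{|I(B)|}y^{|E(B)|}$. Your orthogonality proof of (2) is clean: an internally active $i$ lying in $C(e,B)$ would force, by circuit--cocircuit orthogonality, a second common element of $C(e,B)$ and $C^*(i,B)$, which membership in $(B\cup e)\cap((E\setminus B)\cup i)$ shows must be $e$ itself, giving the contradiction $e<i<e$. This places $E(B)$ in the closure of $B\setminus I(B)$, hence of $B\setminus I$, giving $r(B-I\cup E)=|B-I|=r-|I|$. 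For (1), the deletion--contraction induction on the maximal element $e$ is exactly the right frame: when $e$ is neither a loop nor a coloop, maximality of $e$ kills its own activity (observation (a)), and your observation (b) --- that for $B\ni e$ one has $C_{M/e}(f,B-e)=C_M(f,B)\setminus e$ and $C^*_{M/e}(i,B-e)=C^*_M(i,B)$, while for $B\not\ni e$ one has $C_{M\setminus e}(f,B)=C_M(f,B)$ and $C^*_{M\setminus e}(i,B)=C^*_M(i,B)\setminus e$ --- does check out (the sets in question are subsets of $B\cup f$ resp.\ $(E\setminus B)\cup i$, so they contain $e$ only in the cases where $e$ gets removed, and since $e$ is maximal the minimum, hence the activity, is unchanged). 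You correctly flag this as the locus of work; it is routine but worth spelling out in a final writeup, as is the bookkeeping in the loop/coloop base case where the $M$-interval of $B$ is the $M\setminus e$-interval with each member doubled (with and without $e$). In short: correct, and essentially the textbook proof the paper is implicitly invoking.
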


\begin{proposition}\label{prop:basis}
Let $\A$ be an arrangement.
\begin{enumerate}
\item
For $k \geq 0$, a basis for $C_{\A,k}$ is given by the set $L_k$ of $l$-monomials of the form
\[
m_{B, I, \alpha_I} = \prod_{i \in \A-B-E(B)} l_i\,\,  \prod_{j \in I} l_j^{\alpha_j+1} 
\]
where $B$ is a basis of $\A$, $I \subseteq I(B)$ is a subset of the internally active elements of $B$, and $\alpha_I = (\alpha_i)_{i \in I}$ is a sequence of non-negative integers with $\sum \alpha_i \leq k$.
\item
For $k=-1$, a basis for $C_{\A,-1}$ is given by the $l$-monomials $l_{\A-B-E(B)}$ 
where $B$ is a basis of $\A$.
\item
For $k=-2$, a basis for $C_{\A,-2}$ is given by the $l$-monomials $l_{\A-B-E(B)}$  
where $B$ is an internal basis of $\A$.
\end{enumerate}
\end{proposition}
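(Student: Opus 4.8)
The plan is to prove all three statements simultaneously by induction on the number of hyperplanes that are neither loops nor coloops, exactly as in the proof of Propositions~\ref{prop:exact} and~\ref{prop:generate}. The strategy has two halves: first, show that the proposed sets $L_k$ (resp. the $l_{\A-B-E(B)}$) \emph{span} $C_{\A,k}$, and second, show that they have the right cardinality, namely $\dim C_{\A,k} = \Hilb(C_{\A,k};1)$ as computed from Theorem~\ref{th:Tutte} and Corollaries~\ref{cor:hilbertC0}--\ref{prop:hilbertC2}. Once spanning and a cardinality match are both established, linear independence is automatic. The cardinality bookkeeping is pure combinatorics: using Proposition~\ref{prop:activities}(3), the number of monomials $m_{B,I,\alpha_I}$ with $\sum\alpha_i\le k$ and $I\subseteq I(B)$ is $\sum_B \sum_{I\subseteq I(B)}\binom{k+|I|}{|I|}$, which one checks against $[z^0+\cdots+z^k]$ applied to the generating function $\frac{q^{n-r}}{(1-z)(1-qz)^m}T_\A(1+\frac{q}{1-qz},\frac1q)$ at $q=1$; for $k=-1,-2$ the counts $\sum_B 1$ over all bases (resp. internal bases) match $T_\A(1,1)$ and $T_\A(0,1)$ on the nose.

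For the spanning step, I would carry the induction through the deletion–contraction exact sequence $0\to C_{\A\backslash H,k}(-1)\xrightarrow{\cdot l_H} C_{\A,k}\xrightarrow{l_H=0} C_{\A/H,k}\to 0$ of Proposition~\ref{prop:exact}, taking $H$ to be the largest hyperplane in the linear order that is neither a loop nor a coloop. The key compatibility to verify is how internal and external activity behave under deletion and contraction of the \emph{largest} element: if $H$ is the maximal non-loop-non-coloop element, then $H$ is externally active in a basis $B$ of $\A\backslash H$ iff $B$ is still a basis of $\A$ (so the spanning monomials of $C_{\A\backslash H,k}$, multiplied by $l_H$, land among the spanning monomials of $C_{\A,k}$), and the bases of $\A/H$ are the $B-H$ for $B$ a basis of $\A$ containing $H$, with activities inherited appropriately (so setting $l_H=0$ kills exactly the monomials $m_{B,I,\alpha_I}$ with $H\in I$ and sends the rest to the claimed basis of $C_{\A/H,k}$). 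This is the standard recursion that underlies Proposition~\ref{prop:activities}(1); I would invoke it to check that the claimed basis of $C_{\A,k}$ splits cleanly as (image of $l_H\cdot$ basis of $C_{\A\backslash H,k}$) $\sqcup$ (lift of basis of $C_{\A/H,k}$), matching the short exact sequence term by term.

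The base cases are the loop/coloop-only arrangements $\H_n^m$: there $C_{\H_n^m,k}$ has the explicit monomial basis $y_{i_1}^{\alpha_1}\cdots y_{i_t}^{\alpha_t}Y_1^{\beta_1}\cdots Y_m^{\beta_m}$ from Proposition~\ref{prop:coloops}, and one matches this against $L_k$ (every subset of $[n]$ is a basis-minus-internally-active set in this matroid), while for $k=-1$ only $l_\emptyset=1$ survives and for $k=-2$ the space is zero unless there are no coloops. One must also handle the induction step when $H$ ranges over loops and coloops of the partial computation tree, but Proposition~\ref{prop:loopcoloop} reduces these to the inductive hypothesis directly (a loop changes nothing; a coloop $H$ contributes the factor $(1+q)$ in the $k=0$ case, matching the fact that $H$ becomes internally active and contributes the extra monomials $l_H,l_H^2,\dots,l_H^{k+1}$, while for $k=-1,-2$ the coloop case is governed by the vanishing statements in Proposition~\ref{prop:loopcoloop}(2)). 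I expect the main obstacle to be precisely the activity-under-deletion/contraction compatibility for the \emph{maximal} non-separating element — getting the external activity of $H$ in $\A\backslash H$ versus its status in $\A$ exactly right, and tracking how $E(B)$ and $I(B)$ transform, so that the partition in Proposition~\ref{prop:activities}(1) matches the splitting of the exact sequence grade-by-grade rather than just in total dimension. Once that lemma is in hand the rest is routine.
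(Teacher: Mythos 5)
Your proposal is correct in outline but takes a genuinely different route from the paper. The paper's proof does \emph{not} run the deletion--contraction induction again for the basis statement; instead it uses Proposition~\ref{prop:generate} (already proved) to know that $C_{\A,k}$ is spanned by the $l$-monomials it contains, and then proves that $L_k$ spans by a minimal-counterexample argument: it puts a total order on supports of $l$-monomials, takes a monomial $m\notin\mathrm{span}(L_k)$ with $<$-maximal support $S$, writes $S=(\A-B-E)\cup I$ via the interval partition of Proposition~\ref{prop:activities}.1, and shows first that $E$ must equal $E(B)$ (rewriting via a circuit relation $l_e=\sum_{c\in C}a_cl_c$ pushes to a larger support) and then that all exponents on $\A-B-E(B)$ must be $1$ (rewriting via a basis relation $l_e=\sum_{b\in B}a_bl_b$ pushes to a larger support or a smaller total degree on those variables). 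The final step, matching cardinalities against the Hilbert series, is the same in both approaches. What your route buys is a cleaner conceptual picture --- you exhibit $L_k^\A$ as the disjoint union of $l_H\cdot L_k^{\A\backslash H}$ and a lift of $L_k^{\A/H}$, matching the exact sequence of Proposition~\ref{prop:exact} term by term --- at the cost of re-deriving the activity recursion under deletion and contraction of the maximal element, which the paper sidesteps by invoking the interval partition once and for all. You correctly flag that this activity-compatibility lemma is the crux; it is classical (it is the standard proof of the basis-activity expansion of the Tutte polynomial) but you would need to state and check it carefully, including the bookkeeping when the maximal element of the ground set is itself a loop or coloop (loops are always externally active and coloops always internally active, so if the element you delete/contract is not the global maximum you must argue separately that the larger loops and coloops do not interfere). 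One further point worth making explicit in either proof: the statement requires $\A$ to be essential (the $l_i$ span $V^*$), otherwise $L_k$ cannot span --- the paper's proof uses this tacitly, and your cardinality check against Theorem~\ref{th:Tutte} only comes out right when the excess dimension $m$ is zero.
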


\begin{proof}
We start with the case $k \geq 0$. First we prove that $L_k$ spans $C_{\A,k}$. 

From Proposition \ref{prop:generate} and the fact that the $l_i$s span $V^*$, it follows that $C_{\A,k}$ is spanned by the $l$-monomials inside it. Define the total order $<$ on the supports of the $l$-monomials by declaring $\supp(a)>\supp(b)$ if 
\begin{enumerate}
\item
$|\supp(a)| > |\supp(b)|$, or
\item
$|\supp(a)| = |\supp(b)|$ and $\supp(a)>\supp(b)$ in reverse lexicographic order.
\end{enumerate}
Among the $l$-monomials in $C_{\A,k}$ which are not in the span of $L_k$,
let $m$ be one having maximal support $S$ according to the order $<$.
By Proposition \ref{prop:activities}.1, this support can be written uniquely as
\[
S=(\A - B - E) \,\cup \,I,  \qquad \textrm{ for some basis } B \textrm{ and some } E \subseteq E(B), I \subseteq I(B). 
\]
In fact, we claim that $E=E(B)$.

Suppose $E \neq E(B)$ and take $e \in E(B) - E$. Then $e$ is the smallest element in the unique circuit $C \cup e$ contained in $B\cup e$. Write $l_e = \sum_{c \in C} a_cl_c$. All elements of $C$ are larger than $e$ and hence not  internally active in $B$. We have
\[
m = \sum_{c \in C} a_c ml_c/l_e.
\]
Notice that $l_e$ is one of the factors of $m$ by definition. Each $ml_c/l_e$ has degree $1$ in $l_c$, so it is in $C_{\A, k}$, and has support larger than $S$. So each term in the right hand side is spanned by $L_k$, which contradicts the assumption that $m$ is not in the span of $L_k$. It follows that the set $S$ is indeed of the form
\[
S=(\A-B-E(B)) \cup I.
\] 

Now let $N_S$ consist of those monomials with support $S$ which are in $C_{\A,k}$ and not generated by $L_k$.
Consider $m' \in N_S$ having lowest total degree in the variables indexed by $(\A-B-E(B))$. At least one of those variables must be raised to a power greater than 1; say it is $l_e$. Since $B$ is a basis of $\A$ we can write $l_e = \sum_{b \in B} a_bl_b$ and obtain 
\[
m' = \sum_{b \in B} a_bm'l_b/l_e. 
\]
If $b \in I$ then $a_bm'l_b/l_e$ has the same support $S$ and lower $(\A-B-E(B))$-degree than $m'$, so it is spanned by $L_k$. If $b \notin I$  then $a_bm'l_b/l_e$ has support larger than $S$ so it is spanned by $L_k$. So each term in the right hand side is spanned by $L_k$, which contradicts the assumption that $m'$ is not in the span of $L_k$. We conclude that $N_S$ is empty, and $L_k$ spans $C_{\A,k}$. 

Finally we claim that the number of monomials in $L_k$ equals the dimension of $C_{\A,k}$. Consider a basis $B$ with internal activity $|I(B)|=i$ and external activity $|E(B)|=e$. For some $a \leq i$ and $b \leq k$, we need to choose $a$ internally active elements and $a$ non-negative $\alpha$s adding up to $b$; there are ${i \choose a}{a+b-1 \choose a-1}$ choices. The resulting monomial has degree $|\A|-|B|-|E(B)|+a+b = n-r-j+a+b$. Comparing this with the second equation in the proof of Theorem \ref{th:Tutte}, we can check that we have found the correct number of generators in each degree. It follows that $L_k$ is a basis for $C_{\A,k}$.

\medskip

Next we proceed with the case $k=-1$. The space $C_{\A, -1}$ is spanned by the monomials $l_S$ with $r(\A-S)=r$; by Proposition \ref{prop:activities}, this is equivalent to $S=\A-B-E$ for some basis $B$ and $E \subseteq E(B)$. Repeating the argument above, we find that the monomials $l_S$ with $S=\A-B-E(B)$ are sufficient to span $C_{\A, -1}$. Since the dimension of $C_{\A, -1}$ equals the number of bases of $\A$, these monomials are a basis for $C_{\A,-1}$

\medskip

Finally we settle the case $k=-2$. A monomial $l_S$ with $S=\A-B-E$ is in $C_{\A, -2}$ if and only if $B$ is an internal basis, because the number $r(\A-S-x)=r(B \cup E - x)$ equals $r-1$ if $x \in I(B)$, and $r$ otherwise. As above the monomials $l_{\A-B-E(B)}$, with $B$ an internal basis, form a spanning set for $C_{\A, -2}$; this is actually a basis since the dimension of this space is the number of internal bases of $\A$.
\end{proof}

Holtz and Ron also mention the ``inherent difficulties we encountered in the internal study due to the absence of a `canonical' basis for" $C_{\A, -2}$. The case $k=-2$ of Proposition \ref{prop:basis} offers a satisfactory solution to these difficulties.


\subsection{The space $C_{\A, k}$ for $k \leq -3$}

We do not know whether the ideals $I_{\A, k}$ are well-behaved for $k \leq -3$ in general. To compute $\Hilb(C_{\A,  k})$ for $k \geq -2$, we recursively 
\begin{itemize}
\item
produced an upper bound for the Hilbert series by deletion-contraction, and 
\item
constructed a large set of polynomials inside $C_{\A, k}$, all of which were monomials in the $l_i$s. 
\end{itemize}
In the cases $k=0,-1,-2$, the existing proofs \cite{DM, DP, HR, Wa}
are different from ours, but they all rely on constructing a large set of polynomials inside $C_{\A, k}$ which is spanned by monomials in the $l_i$s.

These approaches will not work for $k \leq -3$, because in that case $C_{\A, k}$ is not necessarily spanned by $l_i$-monomials, as the following example shows.

\begin{example}
Consider the arrangement $\G$ of hyperplanes in $\C^3$ given by the linear forms $l_1=y_1+y_2, l_2 = y_2, l_3=-y_1+y_2, l_4 =y_1+y_3, l_5=y_3, l_6=-y_1+y_3$; a real picture of this arrangement is shown in Figure \ref{fig:arr2}, when intersected with the hemisphere $y_1 >0$ of the unit sphere. Then 
\[
I_{\G,-3} = \left<x_1^2, x_2^1, x_3^1\right>
\]
and 
\[
C_{\G, -3} = \span(1, y_1).
\]
Notice that the space $C_{\G, -3}$ is \textbf{not} spanned by monomials in the $l_i$s. Similar examples exist for $k < -3$. 
\end{example}

\begin{figure}[h]
\begin{center}
\includegraphics[width=2.5in]{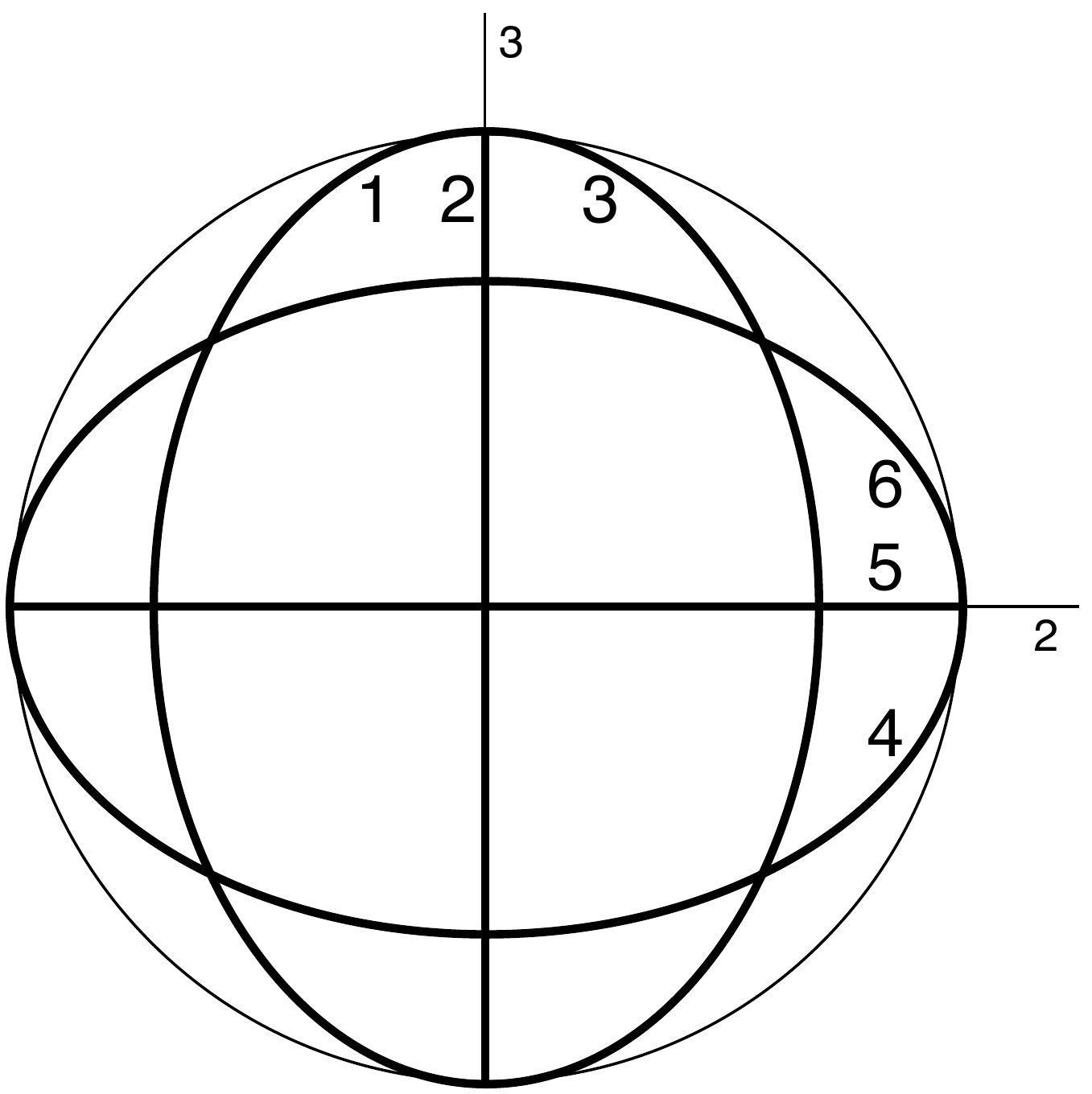} 
\caption{An arrangement $\G$ such that $C_{\G, -3}$ is not spanned by $l_i$-monomials.}
\label{fig:arr2}
\end{center}
\end{figure}

\section{Fat point ideals}\label{sec:fatpoints}

The results of Section \ref{sec:hyparr} are closely related to the theory of fat point ideals. We now outline the connection and apply our results to that theory.

Given a function $\sigma: \P V \rightarrow \N$, we define the \emph{fat point ideal} of $\sigma$ to be the ideal of polynomials in $\C[V^*]$ which vanish at $p$ up to order $\sigma(p)$. We denote it
\[
J({\sigma}) = \bigcap_{p \in \P V} m_p^{\sigma(p)}
\]
where $m_p$ is the ideal of polynomials vanishing at $p$. When $\sigma$ takes negative values, we will simply define $J(\sigma) = J(\widehat{\sigma})$ where $\widehat{\sigma}(v) = \max(\sigma(v), 0).$

The \emph{inverse system} of $J(\sigma)$ is the submodule 
\[
J(\sigma)^{-1} = \left\{f(x) \in \C[V] \mid g\left(\dx\right) f(x) = 0 \textrm{ for all } g \in J_{\sigma}\right\}
\]
of $\C[V]$, which $\C[V^*]$ acts on by differentiation.

Fat point ideals have been studied extensively in the finite case: Given finitely many points $P=\{p_1, p_2, \ldots, p_s\} \subset \P V$ and positive integers $N = (n_1, \ldots, n_s)$, the \emph{fat point ideal of $P$ and $N$} is the homogeneous ideal of polynomials vanishing at $P_i$ up to order $n_i$. We refer the reader to \cite{EI, Ha, Sc} for more information.

Fat point ideals are closely connected to power ideals due to the following theorem of Emsalem and Iarrobino:

%
%

\begin{theorem}\label{th:iarrobino}{\rm\cite{EI}} For any $\sigma: \P V \rightarrow \N$,
\[
(J({\sigma})^{-1})_i = (I_{i-\sigma})_i
\]
\end{theorem}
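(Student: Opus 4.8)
The plan is to unwind both sides of the claimed identity $(J(\sigma)^{-1})_i = (I_{i-\sigma})_i$ degree by degree, using the Macaulay pairing $\langle\cdot,\cdot\rangle$ between $\C[V^*]$ and $\C[V]$ and Lemma \ref{lemma:degreeorder} to translate ``vanishing to order $m$ at $p$'' into ``having degree $\le d-m$ along the direction $p$.'' Here $I_{i-\sigma}$ should be read as the power ideal $I(\rho)$ with directional degree function $\rho(h) = i - \sigma(h)$ (truncated below at $0$, as with $\widehat\sigma$), so that its degree-$i$ piece $(I_{i-\sigma})_i$ is what we compare with the degree-$i$ piece of the inverse system $J(\sigma)^{-1}$.

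First I would fix the degree $i$ and observe that an element of $(J(\sigma)^{-1})_i$ is a homogeneous polynomial $f$ of degree $i$ in $\C[V]$ that is annihilated by every differential operator $g(\partial/\partial x)$ with $g \in J(\sigma)$. Since $J(\sigma) = \bigcap_p m_p^{\sigma(p)}$, and since in degree $i$ the pairing lets us test against a single point at a time, the condition becomes: for each $p \in \P V$, $f$ is orthogonal to $(m_p^{\sigma(p)})_i$. The key computation is then to identify the orthogonal complement, inside degree-$i$ forms, of the degree-$i$ part of $m_p^{\sigma(p)}$: a degree-$i$ form $g$ lies in $m_p^{\sigma(p)}$ iff all its derivatives of order $< \sigma(p)$ vanish at $p$, and by Lemma \ref{lemma:degreeorder} (applied with the roles of $V$ and $V^*$ swapped) the orthogonal complement of this space is exactly the space of degree-$i$ forms whose restriction to a generic line in direction $p$ has degree at most $i - \sigma(p)$ — i.e. forms killed by $p(\partial/\partial x)^{i-\sigma(p)+1}$. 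Intersecting over all $p$ gives precisely $(C(\rho))_i$ with $\rho(h) = i - \sigma(h)$, which is $(I(\rho)^\perp)_i$; but Proposition~\ref{prop:dirdegree} (or rather the general fact recalled just before it, that $\dim A_i = \dim (I^\perp)_i$ and the indexing matches) lets me pass from the inverse system back to the degree-$i$ piece of the ideal $I_{i-\sigma}$ itself. The one subtlety is bookkeeping between ``$I^\perp$ in degree $i$'' and ``$I$ in degree $i$'': since the pairing in a fixed degree is a perfect pairing between $(\C[V^*])_i$ and $(\C[V])_i$, being orthogonal to the complement of $(I^\perp)_i$ is the same as lying in $(I)_i$, so the identification is clean once one is careful about which space is the ideal and which is the inverse system.

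The main obstacle I anticipate is not any deep idea but getting the duality dictionary exactly right: the statement as written equates a piece of the inverse system $J(\sigma)^{-1} \subseteq \C[V]$ with a piece of the \emph{ideal} $I_{i-\sigma} \subseteq \C[V]$, which is only sensible because in each fixed degree ``orthogonal complement of orthogonal complement'' returns the original space and because the truncation $\widehat{\cdot}$ at $0$ is harmless (when $\sigma(p) = 0$ the condition at $p$ is vacuous, matching $\rho(p) = i \ge$ any degree). So the real work is: (1) reduce the intersection $\bigcap_p m_p^{\sigma(p)}$ to a pointwise orthogonality condition in degree $i$; (2) invoke Lemma \ref{lemma:degreeorder} to rewrite each pointwise condition as a directional-degree bound; (3) recognize the resulting space as $(C(\rho_{i-\sigma}))_i = (I(\rho_{i-\sigma})^\perp)_i$ and hence, via the perfect pairing in degree $i$, as the orthogonal complement of $(I_{i-\sigma})_i$ — then a final application of double-orthogonality in a fixed degree, or a dimension count using $\dim A_i = \dim(I^\perp)_i$, closes the identity. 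I would present it as a short chain of equalities of subspaces of $(\C[V])_i$, citing Lemma \ref{lemma:degreeorder} as the crucial translation step, since the paper attributes the theorem to \cite{EI} and presumably wants only a brief indication.
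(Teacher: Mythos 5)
Your overall plan (work degree by degree, use Lemma \ref{lemma:degreeorder}, finish with duality in the Macaulay pairing) is on the right track, and the paper itself gives no proof for this theorem — it cites \cite{EI} — so there is no in-text argument to compare against. But two linked errors in the middle make your chain of equalities break. First, the reduction to pointwise conditions is backwards: $f \in (J(\sigma)^{-1})_i$ iff $f \perp J(\sigma)_i$, and $J(\sigma)_i = \bigcap_p (m_p^{\sigma(p)})_i$, but orthogonal complement turns intersections into \emph{sums}, not intersections. So $(J(\sigma)^{-1})_i = \sum_p \bigl((m_p^{\sigma(p)})_i\bigr)^{\perp}$, not the intersection of the complements; this distinction is exactly what makes the answer an \emph{ideal} (a sum of multiples of the generators $p^{i-\sigma(p)+1}$) rather than an inverse system. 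Second, the orthogonal complement of $(m_p^{\sigma(p)})_i$ inside $\C[V]_i$ is not what you claim. By Lemma \ref{lemma:degreeorder}, $(m_p^{\sigma(p)})_i = \ker\bigl(p(\partial/\partial \y)^{i-\sigma(p)+1}\bigr) \subseteq \C[V^*]_i$, and since the Macaulay adjoint of $p(\partial/\partial \y)$ is multiplication by $p$ on $\C[V]$, the orthogonal complement of this kernel is the \emph{image} $p^{i-\sigma(p)+1}\,\C[V]_{\sigma(p)-1}$ — not another ``forms of bounded directional degree'' space. What you describe as the complement is really Lemma \ref{lemma:degreeorder}'s description of $(m_p^{\sigma(p)})_i$ itself (in the other polynomial ring); a dimension count shows the two cannot coincide in general.

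Because of this, your argument lands on $(C(i-\sigma))_i$, which by Corollary \ref{cor:JC} equals $(J(\sigma))_i$ rather than $(J(\sigma)^{-1})_i$, and the closing appeal to double orthogonality cannot repair the identity: $(C(i-\sigma))_i$ and $(I_{i-\sigma})_i$ are orthogonal complements of one another in the degree-$i$ pairing, not the same space. The correct chain is short once the perps are kept straight: $(J(\sigma)^{-1})_i = (J(\sigma)_i)^{\perp}$; $J(\sigma)_i = (C(i-\sigma))_i$ by Lemma \ref{lemma:degreeorder}; $(C(i-\sigma))_i = \bigl((I(i-\sigma))_i\bigr)^{\perp}$ because the degree-$i$ pairing is perfect; hence $(J(\sigma)^{-1})_i = (I(i-\sigma))_i$ by double orthogonality. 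Equivalently and more directly, $(J(\sigma)^{-1})_i = \bigl(\bigcap_p (m_p^{\sigma(p)})_i\bigr)^{\perp} = \sum_p p^{i-\sigma(p)+1}\,\C[V]_{\sigma(p)-1} = (I(i-\sigma))_i$, the last equality being the definition of the degree-$i$ piece of the power ideal.
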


\begin{corollary} \label{cor:JC} For any $\sigma: \P V \rightarrow \N$
\[
(J({\sigma}))_i = C(i-\sigma)_i
\]
\end{corollary}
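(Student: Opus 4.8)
The plan is to deduce Corollary~\ref{cor:JC} directly from Theorem~\ref{th:iarrobino} by passing through the duality between a graded ideal and its Macaulay inverse system. Recall that for a homogeneous ideal $K \subseteq \C[V^*]$ and the power ideal framework set up in Section~\ref{sec:ideals}, the pairing $\<\cdot,\cdot\>$ between $\C[V]$ and $\C[V^*]$ is a perfect pairing on each graded piece, and it restricts to a perfect pairing between $K^\perp_i$ and $(\C[V^*]/K)_i$; equivalently $\dim (K^\perp)_i = \dim (\C[V^*]/K)_i$, and $(K^\perp)_i = \left((\C[V^*])_i / K_i\right)^{\!*}$ under this pairing. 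The same statement with the roles of $\C[V]$ and $\C[V^*]$ interchanged says that for a homogeneous ideal $I \subseteq \C[V]$, its inverse system $I^{-1} \subseteq \C[V^*]$ satisfies $(I^{-1})_i \cong \left((\C[V])_i/I_i\right)^{\!*}$, so that dualizing again gives $I_i \cong \left((\C[V^*])_i/(I^{-1})_i\right)^{\!*}$, i.e.\ $(\C[V])_i/I_i \cong (I^{-1})^*_i$ as the honest double-dual identification.

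First I would apply this to $I = I_{i-\sigma}$: by definition $C(i-\sigma) = I(i-\sigma)^\perp = (I_{i-\sigma})^{-1}$ in degree $i$, so $C(i-\sigma)_i = (I_{i-\sigma}^{-1})_i$. Next I would apply the inverse-system duality the other way to $J(\sigma)$: its inverse system $J(\sigma)^{-1}$ is defined above, and the perfect pairing gives $(J(\sigma))_i = \bigl((\C[V^*])_i / (J(\sigma)^{-1})_i^{\perp}\bigr)$-type identification; more to the point, the pairing shows $(J(\sigma))_i$ is the annihilator (under $\<\cdot,\cdot\>$) of $(J(\sigma)^{-1})_i$ inside $(\C[V^*])_i$, and dually $(J(\sigma)^{-1})_i$ is the annihilator of $(J(\sigma))_i$. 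Now Theorem~\ref{th:iarrobino} states $(J(\sigma)^{-1})_i = (I_{i-\sigma})_i$ as subspaces of $(\C[V])_i$. Taking annihilators of both sides under the perfect pairing between $(\C[V])_i$ and $(\C[V^*])_i$ converts this into an equality of subspaces of $(\C[V^*])_i$: the annihilator of $(I_{i-\sigma})_i$ is exactly $I(i-\sigma)^\perp_i = C(i-\sigma)_i$, while the annihilator of $(J(\sigma)^{-1})_i$ is $(J(\sigma))_i$. Hence $(J(\sigma))_i = C(i-\sigma)_i$, which is the claim.

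The main obstacle, and the only point requiring genuine care, is bookkeeping of which polynomial ring lives where and making sure the two ``inverse system'' operations are genuinely adjoint/inverse to each other under one fixed perfect pairing — that is, that $(K^\perp)^{\perp'} = K$ on each graded component, where $\perp$ goes $\C[V^*] \to \C[V]$ and $\perp'$ goes back. This is the standard Macaulay duality for Artinian-in-each-degree situations and follows from the pairing being perfect on each $(\C[V])_i \times (\C[V^*])_i$, a fact already invoked implicitly in Section~\ref{sec:ideals} (where it is noted that $\dim A_i = \dim (I^\perp)_i$). One should also note the degenerate case where $\sigma$ takes negative values: since by convention $J(\sigma) = J(\widehat\sigma)$ and likewise $I_{i-\sigma}$ and $C(i-\sigma)$ only see the truncation $\widehat{\ \cdot\ }$ of the exponent function through the requirement that exponents be nonnegative, the identity is unaffected. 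I would write this all up in two short steps: (1) record the graded Macaulay duality as a lemma-free remark, (2) apply it to Theorem~\ref{th:iarrobino} and take annihilators.

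\begin{proof}
Fix a degree $i$. The pairing $\<\cdot,\cdot\>$ restricts to a perfect pairing between the finite-dimensional spaces $(\C[V])_i$ and $(\C[V^*])_i$. Under a perfect pairing, for any homogeneous ideal $K \subseteq \C[V^*]$ the space $(K^\perp)_i \subseteq (\C[V])_i$ is precisely the annihilator of $K_i$, and conversely $K_i$ is the annihilator of $(K^\perp)_i$; the analogous statement holds for homogeneous ideals of $\C[V]$ and their inverse systems in $\C[V^*]$, since the pairing is symmetric in the two rings. In particular $C(i-\sigma)_i = (I(i-\sigma)^\perp)_i$ is the annihilator of $(I_{i-\sigma})_i$ inside $(\C[V^*])_i$, and $(J(\sigma))_i$ is the annihilator of $(J(\sigma)^{-1})_i$ inside $(\C[V^*])_i$.

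By Theorem~\ref{th:iarrobino}, $(J(\sigma)^{-1})_i = (I_{i-\sigma})_i$ as subspaces of $(\C[V])_i$. Taking annihilators of both sides with respect to the perfect pairing yields an equality of subspaces of $(\C[V^*])_i$: the left side becomes $(J(\sigma))_i$ and the right side becomes $C(i-\sigma)_i$. Hence $(J(\sigma))_i = C(i-\sigma)_i$. (When $\sigma$ takes negative values, both sides are unchanged upon replacing $\sigma$ by $\widehat\sigma$, so the identity still holds.)
\end{proof}
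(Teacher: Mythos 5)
Your proof is correct, but it takes a genuinely different route from the paper's. The paper disposes of the corollary in one line by observing that it is a direct restatement of Lemma~\ref{lemma:degreeorder}: a degree-$i$ homogeneous polynomial $g \in \C[V^*]$ vanishes at $p$ to order at least $\sigma(p)$ if and only if its directional degree at $p$ is at most $i - \sigma(p)$, and the latter condition over all $p$ is precisely the defining condition for membership in $C(i-\sigma)_i$ via Proposition~\ref{prop:dirdegree}. That is the whole proof — no duality is invoked. You instead start from Theorem~\ref{th:iarrobino}, which is stated as a cited result, and recover the corollary by taking annihilators with respect to the graded perfect pairing. Since Theorem~\ref{th:iarrobino} is itself proved (in Emsalem--Iarrobino) essentially from Lemma~\ref{lemma:degreeorder} by a dual argument, your route amounts to a round trip: passing to $J(\sigma)^{-1}$ and $I(i-\sigma)$ and then dualizing back. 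What your approach buys is a clean derivation that treats the theorem and the corollary as perp-dual statements, which emphasizes the Macaulay duality structure and would be the natural argument if one took Theorem~\ref{th:iarrobino} as a black box; what the paper's approach buys is self-containedness and brevity, since Lemma~\ref{lemma:degreeorder} is proved in the paper while Theorem~\ref{th:iarrobino} is only cited. Your bookkeeping of the two perp operations is correct and is indeed the one point requiring care.

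One small caveat: your closing parenthetical addresses the case where $\sigma$ takes negative values, but the hypothesis already requires $\sigma : \P V \to \N$, so that case does not arise. The degenerate case that actually needs a word is when $i - \sigma(p) < 0$ for some $p$; there the right reading is that $I(i-\sigma)$ is the unit ideal (since $h^{0}=1$ is among the prescribed generators), so $C(i-\sigma)_i = 0$, matching $J(\sigma)_i = 0$ because no nonzero degree-$i$ form can vanish to order exceeding $i$. This interpretation is implicit in Theorem~\ref{th:iarrobino}, so your main argument is unaffected, but the parenthetical as written points at the wrong degeneracy.
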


\begin{proof} 
This is simply a restatement of Lemma \ref{lemma:degreeorder}.
\end{proof}

Our results on power ideals of hyperplane arrangements allow us to compute the Hilbert series of a family of fat point ideals associated to a hyperplane arrangement. Let $\A=\{H_1, \ldots, H_n\}$ is an arrangement of $n$ hyperplanes in a vector space $V$ and let $l_1, \ldots, l_n \in V^*$ be the corresponding linear forms. For each $p \in V$ let $f_{\A}(p)$ be the number of hyperplanes of $\A$ containing $p$. This can be regarded as a function $f_{\A}:  \P V \rightarrow \N$. Notice that $f_{\A}(p)$ is the order of vanishing of the polynomial $l_1\cdots l_n$ at $p$.

We will consider the shifts $f_{\A}-k$ of this function by a constant $k$. 
This is mostly interesting for $0 \leq k \leq n$: for $k <0$ our function is positive everywhere so $J(f_{\A}-k)=\{0\}$, while for $k > n$ our function is negative everywhere so $J(f_{\A}-k) = \C[V^*]$.
We are interested in the filtration 
\[
\{0\}= J(f_{\A}+1) \subseteq J(f_{\A}) \subseteq J(f_{\A}-1) \subseteq \cdots \subseteq J(f_{\A}-n) = \C[V^*],
\]
of the space of polynomials in $\C[V^*]$ by how the order of vanishing of a polynomial compares to the order of vanishing of $l_1 \cdots l_n$.

\begin{theorem}\label{th:fatpoints}
Let $\A$ be a rank $r$ arrangement of $n$ hyperplanes in $V = \C^{r+m}$. For each $p \in V$ let $f_{\A}(p)$ be the number of hyperplanes of $\A$ containing $p$. Consider the filtration of $\C[V^*]$:
\[
\{0\}= J(f_{\A}+1) \subseteq J(f_{\A}) \subseteq J(f_{\A}-1) \subseteq \cdots \subseteq J(f_{\A}-n) = \C[V^*],
\]
where
\[
J(f_\A-k) := \left\{ \textrm{polynomials in } \C[V^*] \textrm{ vanishing at $p$ to order }f_{\A}(p) -k\right\}.
\]
If $J_{\A, k} = J(f_{\A} - k) / J(f_{\A}-(k-1))$, then
\[
\sum_{k=0}^n \Hilb(J_{\A, k}; t) \, s^k = \frac{s^n}{(1-t)^m} T_\A\left(\frac{2-t}{1-t}, \frac{2s-t}s \right).
\]
\end{theorem}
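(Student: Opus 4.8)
The plan is to reduce the computation of $\Hilb(J_{\A,k};t)$ to the already-established formulas for $\Hilb(C_{\A,k};q)$ via the Emsalem--Iarrobino correspondence (Corollary \ref{cor:JC}), and then match generating functions. The starting point is the observation that for a homogeneous ideal, passing to the quotient $J(f_\A-k)/J(f_\A-(k-1))$ is taking graded pieces, and Corollary \ref{cor:JC} identifies $(J(\sigma))_i$ with $C(i-\sigma)_i$. Here $\sigma = f_\A - k$, so $i - \sigma = i - f_\A + k$. Now $f_\A(p)$ is the order of vanishing of $l_1\cdots l_n$ at $p$, so by Lemma \ref{lemma:degreeorder} the directional degree function $\rho_\A$ satisfies $\rho_\A(h) = n - f_\A(h)$ (the degree of $l(\A)$ along a generic line parallel to $h$ is $n$ minus the vanishing order at $h$). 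Hence $i - f_\A + k = (i + k - n) + \rho_\A$, and $(J(f_\A-k))_i = C(\rho_\A + (i+k-n))_i = (C_{\A, i+k-n})_i$.

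First I would make this identification precise: $\dim (J_{\A,k})_i = \dim(C_{\A,i+k-n})_i - \dim(C_{\A,i+k-n-1})_i$, where one must handle the edge cases $i+k-n < -2$ (both spaces trivial in degree $i$, so the difference is $0$, consistent with $J(f_\A+1)=\{0\}$ when $k=-1$... actually $k$ ranges $0,\dots,n$). One subtlety: the formulas for $\Hilb(C_{\A,j};q)$ were only proven for $j \geq -2$, so I need $i + k - n \geq -2$, i.e. $i \geq n - k - 2$; for smaller $i$ I should verify directly that the graded pieces vanish or that the difference is still $0$. This gives
\[
\Hilb(J_{\A,k};t) = \sum_i \left[(C_{\A,i+k-n})_i - (C_{\A,i+k-n-1})_i\right] t^i.
\]
Writing $j = i + k - n$, so $i = j + n - k$, this becomes $\sum_{j \geq -2}\left[(C_{\A,j})_{j+n-k} - (C_{\A,j-1})_{j+n-k}\right]t^{j+n-k}$.

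Next I would introduce the double generating function $\sum_{k=0}^n \Hilb(J_{\A,k};t)s^k$ and substitute. The sum over $k$ of $t^{j+n-k}s^k$ pairs the variables; summing over $k$ from $0$ to $n$ (with the extra terms contributing zero by the edge-case analysis) should produce a clean bivariate expression. Then I would invoke Theorem \ref{th:Tutte}, which gives $\sum_{j\geq 0}\Hilb(C_{\A,j};q)z^j = \frac{q^{n-r}}{(1-z)(1-qz)^m}T_\A\!\left(1+\frac{q}{1-qz},\frac1q\right)$, together with the separate formulas for $j=-1,-2$ from Propositions \ref{prop:hilbertC1} and \ref{prop:hilbertC2}. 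Extracting the coefficient of $t^i$ from $\Hilb(C_{\A,j};q)$ means reading off the degree-$i$ part, and what I actually need is the diagonal-type specialization where the ``internal'' degree index $i$ is tied to $j$ by $i = j + n - k$. The telescoping structure ($(C_{\A,j})_\bullet - (C_{\A,j-1})_\bullet$) should interact nicely with the $\frac{1}{1-z}$ factor in Theorem \ref{th:Tutte}: multiplying a generating function in $z$ by $(1-z)$ cancels that denominator, which is exactly the shape one wants. So I expect the $z$-variable in Theorem \ref{th:Tutte} to play the role of $s$ (up to a monomial twist) and $q$ to play the role of $t$, and the final identity to drop out after substituting $q = t$, identifying $z$ with an appropriate function of $s$ and $t$, and simplifying the Tutte arguments: $1 + \frac{q}{1-qz}$ should become $\frac{2-t}{1-t}$ and $\frac1q$ should become $\frac{2s-t}{s}$ under the right substitution.

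The main obstacle will be bookkeeping the three regimes $j\geq 0$, $j=-1$, $j=-2$ coherently inside a single generating-function identity, and getting the substitution dictionary between $(q,z)$ and $(s,t)$ exactly right so that the denominators $(1-z)$ and $(1-qz)^m$ transform into $1$ and $(1-t)^m$ respectively while the Tutte arguments land on $\frac{2-t}{1-t}$ and $\frac{2s-t}{s}$. In particular, I should double-check whether the $j=-1,-2$ terms are genuinely ``extra'' or whether they are already captured by analytically continuing the $j\geq 0$ formula; Propositions \ref{prop:hilbertC1} and \ref{prop:hilbertC2} evaluate $T_\A$ at $x=1$ and $x=0$, which are precisely the values $1+\frac{q}{1-qz}$ would take at $z = \frac1q$ and $z = \frac2q$, suggesting the single rational function in Theorem \ref{th:Tutte} already ``knows'' these boundary cases, and the telescoping sum effectively evaluates it at these special points. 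If that coherence holds, the whole computation collapses to one substitution and a routine algebraic simplification of the Tutte arguments; if it does not, I would instead verify the identity degree by degree in $t$, using the explicit coefficient formula for $\Hilb(C_{\A,j};q)$ from the proof of Theorem \ref{th:Tutte} (the double sum over bases with internal/external activities) and matching against the coefficient extraction from the right-hand side of the claimed formula, again via Proposition \ref{prop:activities}.3.
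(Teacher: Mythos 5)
Your strategy---pass through Emsalem--Iarrobino (Corollary \ref{cor:JC}) to translate $J(f_\A-k)$ into the spaces $C_{\A,j}$, then invoke Theorem \ref{th:Tutte} and specialize the generating function---is the same as the paper's. Two points of execution differ in ways that matter. You telescope immediately, writing $\dim(J_{\A,k})_d = \dim(C_{\A,d+k-n})_d - \dim(C_{\A,d+k-n-1})_d$; the paper instead computes $\Hilb(J(f_\A-k);t)$ in full for each $k$ and takes successive differences only at the very end. Using $(J(f_\A-k))_{n+i}=(C_{\A,i+k})_{n+i}$ for $i\ge -k$ keeps $j=i+k\ge 0$ throughout, so Theorem \ref{th:Tutte} alone suffices and Propositions \ref{prop:hilbertC1}--\ref{prop:hilbertC2} are never invoked. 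Your telescoped formula is moreover not quite right at the bottom degree $d=n-k$: the origin constraint makes $(J(f_\A-(k-1)))_{n-k}=0$, while your expression subtracts $\dim(C_{\A,-1})_{n-k}$, which is typically nonzero once $k\ge r$. So the ``edge cases'' you flag are a genuine correction to carry out, not just something to verify.

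Second, the specialization you guess ($q=t$, $z\sim s$) is not what is needed. The paper uses a diagonal coefficient extraction: it sets $z=t/q$ (so $1-qz\mapsto 1-t$, producing $(1-t)^{-m}$), applies $[q^0]U(q,t/q)=\sum_i a_{ii}t^i$, and then invokes the rescaling $T_\A\bigl(1+\tfrac{q}{1-t},\tfrac1q\bigr)=q^{\,r-n}T_\A\bigl(\tfrac{2-t}{1-t},\,2-q\bigr)$ before extracting $[q^{n-k}]$ and summing against $s^k$. I would scrutinize that rescaling step before relying on it: the corank-nullity expansion gives $T_\A(1+ax,1+y/a)=\sum_{A}a^{\,r-|A|}x^{r-r(A)}y^{|A|-r(A)}$, and the exponent $r-|A|$ is not constant in $A$. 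As a sanity check, setting $s=1$ in the stated identity must return $\Hilb(\C[V^*];t)=(1-t)^{-(r+m)}$ since the $J_{\A,k}$ are the graded pieces of a filtration of $\C[V^*]$, but the right-hand side at $s=1$ specializes to $(2-t)^n(1-t)^{-(r+m)}$. Working out the case of a single hyperplane in $\C^{1+m}$ explicitly will show you where the algebra needs to be repaired before your plan can close.
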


\begin{proof}
First of all notice that the polynomials in $J(f_{\A}-k)$ must vanish up to order $n-k$ at the origin, so this ideal cannot contain polynomials of degree less than $n-k$. For larger degrees, \emph{i.e.}, for $i \geq -k$, using Corollary \ref{cor:JC} we have that $(J(f_{\A}-k))_{n+i} = C(n+i+k-f_{\A})_{n+i} = C(\rho_{\A}+i+k)_{n+i} = (C_{\A, i+k})_{n+i}$. Since $i+k \geq 0$, Theorem \ref{th:Tutte} then gives
\begin{eqnarray*}
\Hilb(J(f_{\A}-k);t) &=& \sum_{i \geq 0} \dim (C_{{\A},i+k})_{n+i} \, t^{n+i}\\
&=& t^n \sum_{i \geq 0} \, t^i \, [q^iz^i] \, \frac{q^{-r}z^{-k}}{(1-z)(1-qz)^m} \, T_{\A}\left(1+\frac{q}{1-qz}, \frac1q \right)
\end{eqnarray*}
Notice that if $U(q,z) = \sum a_{ij}q^iz^j$ is a formal power series in two variables, then $[q^0]U(q,t/q) = \sum a_{ii} t^i$. Therefore
\begin{eqnarray*}
\Hilb(J(f_{\A}-k);t) &=& t^n\, [q^0]\, \frac{q^{-r}(t/q)^{-k}}{(1-t/q)(1-t)^m}\, T_{\A}\left(1+\frac{q}{1-t}, \frac1q \right)\\
&=& \frac{t^{n-k}}{(1-t)^m} \, [q^{n-k}]\, \frac1{1-\frac{t}{q}}\,T_\A\left(\frac{2-t}{1-t}, 2-q\right).
\end{eqnarray*}
In the last step we use the identity $T_\A(1+ax, 1+\frac{y}{a}) = a^{r-n}T_\A(1+x, 1+y)$, which follows easily from the definition of the Tutte polynomial. We get that 
\[
\Hilb(J_{\A,k};t)  = \frac{t^{n-k}}{(1-t)^m} \, [q^{n-k}]\, T_\A\left(\frac{2-t}{1-t}, 2-q\right),
\]
from which the result readily follows. \end{proof}

Notice that $J_{\A,0} = J(f_{\A})$ is the principal ideal generated by the product of $n$ linear forms determining the hyperplanes of $\A$; therefore
\[
\Hilb (J(f_{\A}); t) = \frac{t^n}{(1-t)^{r+m}}
\]
which, one can check, is consistent with the formula of Theorem \ref{th:fatpoints}.

\section{Zonotopal Cox rings}\label{sec:zono}

\subsection{Cox rings and their zonotopal version}

In this section we describe the close relationship between our work and the zonotopal Cox rings defined by Sturmfels and Xu \cite{SX}. Fix $m$ linear forms $h_1, \ldots, h_m$ on an $n$-dimensional vector space $V$, and consider the following family of ideals of $\C[V^*]$:
\[
I_{\u} = \left< h_1^{u_1+1}, \ldots, h_m^{u_m+1} \right>, \qquad \u =(u_1, \ldots, u_m) \in \N^m
\]
Also consider the corresponding inverse systems $I_{\u}^{-1} = \bigoplus_{d \geq 0} I_{d, \u}^{-1}$ in $\C[V]$, graded by degree $d$.

These inverse systems are intimately related to the Cox ring of the variety which one obtains from $\mathbb{P}^{d-1}$ by blowing up the points $h_1, \ldots, h_m$; the relationship is as follows. Let $G$ be the space of linear relations among the $h_i$s. As an additive group, $G$ acts on $R=\C[s_1, \ldots, s_m, t_1, \ldots, t_m]$, with the action of $\lambda \in G$ given by $s_i \mapsto s_i$ and $t_i \mapsto t_i+\lambda_is_i.$
The \emph{Cox-Nagata ring} is the invariant ring $R^G$ with multigrading given by $\deg s_i = e_i$ and $\deg t_i = e_0 + e_i$, where $(e_0, \ldots, e_n)$ is the standard basis for $\Z^{n+1}$.

\begin{theorem}\label{th:iso} {\rm\cite{SX}}
The $\C$-vector spaces $I_{d,\u}^{-1}$ and $R^G_{d, \u}$ are isomorphic.
\end{theorem}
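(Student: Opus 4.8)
The plan is to exhibit an explicit linear isomorphism between $I_{d,\u}^{-1}$ and $R^G_{d,\u}$ by writing down the invariants of $R^G$ in the bidegree $(d,\u)$ very concretely and recognizing that they are indexed by exactly the data of a polynomial in the inverse system $I_\u^{-1}$ of degree $d$. First I would describe the multigraded piece $R_{d,\u}$ before taking invariants: a monomial $\prod_i s_i^{a_i} t_i^{b_i}$ lies in multidegree $d\,e_0 + \sum_i u_i e_i$ with total $e_0$-degree $d$ precisely when $b_i \le u_i$ for each $i$, $a_i + b_i = u_i$, and $\sum_i b_i = d$; so a basis of $R_{d,\u}$ is indexed by vectors $(b_1,\dots,b_m)$ with $0 \le b_i \le u_i$ and $\sum b_i = d$. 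Thus $R_{d,\u}$ is naturally identified with the degree-$d$ part of the polynomial ring $\C[t_1,\dots,t_m]$ truncated by the exponent bounds $b_i \le u_i$, the $s_i$ merely recording the "missing" powers.

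Next I would compute the $G$-invariants. Since $\lambda \in G$ acts by $t_i \mapsto t_i + \lambda_i s_i$ and fixes $s_i$, and $G$ is the space of linear relations $\sum_i \lambda_i h_i = 0$, the invariant condition on an element $P \in R_{d,\u}$ is that $P$ be annihilated by the infinitesimal operators $\sum_i \lambda_i s_i \,\partial/\partial t_i$ for all $\lambda \in G$. Specializing $s_i = 1$ (which is harmless on this multigraded piece by the bound $a_i = u_i - b_i \ge 0$), this says the polynomial $p(t) \in \C[t_1,\dots,t_m]_d$ with $\deg_{t_i} p \le u_i$ satisfies $\sum_i \lambda_i \,\partial p/\partial t_i = 0$ whenever $\sum_i \lambda_i h_i = 0$. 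Now I would translate this into the language of $\C[V^*]$: choosing coordinates, the map $t_i \mapsto$ "pairing against $h_i$" identifies the span of the $\partial/\partial t_i$ modulo the relations $G$ with the differential operators coming from $V$, and the condition "killed by $\partial/\partial t_i$ along relations in $G$" becomes exactly "$p$ descends to a well-defined function that is a polynomial in the $h_i$'s", i.e. $p$ is the image under the substitution $t_i \mapsto h_i$ of an honest polynomial $f$ on $V$ of degree $d$. The degree bound $\deg_{t_i} p \le u_i$ then becomes precisely $h_i(\partial/\partial \x)^{u_i+1} f = 0$, i.e. $f \in I_\u^{-1}$ of degree $d$. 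Reading the chain of identifications backwards gives a well-defined inverse map, so the correspondence $f \leftrightarrow P$ is a $\C$-linear bijection $I_{d,\u}^{-1} \xrightarrow{\ \sim\ } R^G_{d,\u}$.

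I would organize the write-up as three linear-algebra identifications composed together: (1) $R_{d,\u} \cong$ truncated polynomials in $t$; (2) the $G$-invariant subspace $\cong$ polynomials in $t$ that factor through the substitution $t_i \mapsto h_i$, i.e. polynomials on $V$; (3) the truncation $b_i \le u_i$ corresponds under this substitution to the power-ideal annihilation condition, which is exactly Macaulay duality for $I_\u = \langle h_i^{u_i+1}\rangle$ as in the definition of inverse system earlier in the paper. Each step is a bijection on the relevant graded pieces, so the composite is the desired isomorphism.

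The main obstacle is step (2): one must check carefully that the substitution $t_i \mapsto h_i$ (a surjection $\C[t_1,\dots,t_m] \twoheadrightarrow \C[V^*]$ with kernel generated by the linear relations, i.e. by $G$) interacts correctly with the differential-operator condition defining invariance — in other words, that $\{p : \sum_i \lambda_i \partial p/\partial t_i = 0 \ \forall \lambda \in G\}$ is precisely the image of the transpose map $\C[V] \hookrightarrow \C[t]$ dual to $t_i \mapsto h_i$, graded piece by graded piece. This is a standard fact about the dual pairing between $\mathrm{Sym}(W)$ and $\mathrm{Sym}(W^*)$ applied to $W = V$ sitting inside $\C^m$ via the $h_i$, but it requires being scrupulous about which space is differentiating which and about the role of the $s_i$-variables in keeping the grading honest; everything else is bookkeeping with exponent vectors.
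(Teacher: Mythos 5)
The paper does not prove Theorem~\ref{th:iso}; it attributes it to Sturmfels and Xu~\cite{SX} and only exhibits the isomorphism through the worked example that follows the statement. Your reconstruction is correct and recovers exactly that isomorphism: after specializing $s_i=1$, your ``transpose'' embedding is the substitution sending a degree-$d$ polynomial $f$ to $p(\mathbf{t})=f(A\mathbf{t})$, where $A$ is the matrix with columns $h_i$; rehomogenizing by $s^{\u}$ gives $f\mapsto s^{\u}f\bigl(A(\mathbf{t}/\mathbf{s})\bigr)$, which is precisely the map displayed in the paper's example. The three identifications you list all check out, and in step (3) the key computation is $(\partial/\partial t_i)^{k}\,p = \bigl(h_i(\partial/\partial\mathbf{y})^{k}f\bigr)\circ\psi$, so $\deg_{t_i}p\le u_i$ for all $i$ is equivalent to $f\in I_\u^{-1}$.

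Two small points to tighten. First, the phrasing ``$p$ is the image under the substitution $t_i\mapsto h_i$ of an honest polynomial $f$'' is inverted: that substitution is a surjection $\C[\mathbf{t}]\twoheadrightarrow\operatorname{Sym}(\operatorname{span}\{h_i\})$ and does not send $f$ to $p$. You fix this in your ``obstacle'' paragraph by passing to the transpose, which is the right map; it would be cleaner to phrase step (2) directly as: $p$ is $G$-invariant iff $p$ is constant on the cosets of $G$, iff $p=f\circ\psi$ where $\psi\colon\C^m\to V^*$ sends the $i$-th basis vector to $h_i$, since $G=\ker\psi$. Second, make explicit that you are assuming the $h_i$ span, so that $\psi$ is onto; this is what makes $f\mapsto f\circ\psi$ injective in each degree and lets you recover $f$ from $p$. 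Without it, $I_{d,\u}^{-1}$ picks up a tensor factor from the directions not spanned by the $h_i$ and its dimension strictly exceeds $\dim R^G_{d,\u}$, so the statement itself would fail. In the paper's applications the $h_i$ are the lines of an essential arrangement, so the hypothesis holds automatically.
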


It will be useful for our discussion to describe this isomorphism explicitly; this is done in \cite{SX} and is easily understood in the following example.

\begin{example} Let $h_1=x_1, h_2 = x_2, h_3=x_3, h_4=x_1+x_2$, so $G = \span\{(1,1,0,-1)\}$. 
Consider the ideal $I_{(2,3,1,3)} = \left<x_1^3, x_2^4, x_3^2, (x_1+x_2)^4\right>$, which happens to coincide with the ideal $I_{\G,0}$ of  Example \ref{ex}. Form a matrix $A$ whose columns are the $h_i$s. 
Given a polynomial $f(y_1, y_2, y_3) \in I^{-1}_{4, (2,3,1,3)}$ (which equals $(C_{\G, 0})_4$ in this case), we plug in the vector
\[
A 
\left [\begin{matrix}
t_1/s_1 \\
t_2/s_2 \\
t_3/s_3 \\
t_4/s_4\\
\end{matrix} \right]
=
\left[ \begin{matrix}
1 & 0 & 0 & 1 \\
0 & 1 & 0 & 1 \\
0 & 0 & 1 & 0 \\
\end{matrix} \right]
\left [\begin{matrix}
t_1/s_1 \\
t_2/s_2 \\
t_3/s_3 \\
t_4/s_4\\
\end{matrix} \right]
\]
and multiply by $s_1^{u_1} \cdots s_m^{u_m}$ to obtain the corresponding polynomial in $R^G_{d, \u}$. For example the polynomial $y_1y_2^3-y_1^2y_2^2 \in I^{-1}_{4, (2,3,1,3)}$ maps to
\[
\left[\left(\frac{t_1}{s_1} + \frac{t_4}{s_4}\right) \left(\frac{t_2}{s_2} + \frac{t_4}{s_4}\right)^3 -
\left(\frac{t_1}{s_1} + \frac{t_4}{s_4}\right)^2 \left(\frac{t_2}{s_2} + \frac{t_4}{s_4}\right)^2
\right] s_1^2s_2^3s_3s_4^3.
\]
The expression inside each parenthesis is invariant under the $G$-action because $G$ is orthogonal to the rows of $A$, and the final result is a polynomial since the functions in $I^{-1}_{d, \u}$ vanish at $h_i$ to order at most $u_i$.
\end{example}

Cox rings are the object of great interest, and the computation of their Hilbert series has proved to be a subtle question. Much of the existing literature has focused on the case where the $h_i$ are generic. By contrast, here we are interested in very special configurations of $h_i$s, namely the configurations of lines in a hyperplane arrangement $\A$. We do not expect there to be a simple formula for the Hilbert series in this case. Surprisingly, it is possible to identify a natural subring of the Cox ring, whose Hilbert series we can compute explicitly in terms of the combinatorics of $\A$ only.

\bigskip

Let $\A=\{H_1, \ldots, H_n\}$ be an essential arrangement of $n$ hyperplanes in $V = \C^r$ and let $h_1, \ldots, h_m$ be the lines of this arrangement. Let $H$ be the \emph{non-containment line-hyperplane matrix}; \emph{i.e.}, the $m \times n$ matrix whose $(i,j)$ entry equals $0$ if $h_i$ is on $H_j$, and equals $1$ otherwise. Sturmfels and Xu \cite{SX} define the \emph{zonotopal Cox ring} of $\A$ to be
\[
\ZZ(\A) = \bigoplus_{(d,\a) \in \N^{n+1}} R^G_{(d, H\a)}
\]
and the \emph{zonotopal Cox module of shift $w$} to be
\[
\ZZ(\A, w) = \bigoplus_{(d,\a) \in \N^{n+1}} R^G_{(d, H\a+w)}
\]
for $w \in \Z^n$.\footnote{Sturmfels and Xu denote them by $Z^G$ and $Z^{G,w}$ respectively. Our notation is more accurate because these objects do not depend only on $G$, which determines $R^G$ but does not determine the matrix $H$.} Of particular interest are the \emph{central} and \emph{internal} zonotopal Cox modules  
$\ZZ(\A, -\mathbf{e})$ and $\ZZ(\A, -2\mathbf{e})$, where $\mathbf{e} = (1,\ldots, 1)$. Their names are motivated by the following proposition.

\begin{proposition}\label{prop:zonotopal}
For $\a=(a_1, \ldots, a_n) \in \N^n$ let $\A(\a)$ denote the arrangement consisting of $a_i$ copies of the $i$th hyperplane $H_i$ of $\A$. Then 
\[
R^G_{(d, H\a)} \cong (C_{\A(\a), 0})_d, \,\,\,\,\,\,R^G_{(d, H\a-\mathbf{e})} \cong (C_{\A(\a), -1})_d, \,\,\,\,\,\, R^G_{(d, H\a - 2\mathbf{e})} \cong (C_{\A(\a), -2})_d
\]
as vector spaces.
\end{proposition}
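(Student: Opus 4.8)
The plan is to unwind both sides of each claimed isomorphism and reduce everything to the explicit description of the Cox–Nagata ring from Theorem~\ref{th:iso} together with the combinatorial description of $C_{\A,k}$ via directional degrees. First I would recall that, by Theorem~\ref{th:iso}, $R^G_{(d,\u)} \cong I_{d,\u}^{-1}$, where $I_{\u} = \langle h_1^{u_1+1}, \ldots, h_m^{u_m+1}\rangle$ and $h_1, \ldots, h_m$ are \emph{all} the lines of the arrangement. So it suffices to prove, for each $\a = (a_1, \ldots, a_n) \in \N^n$ and $c \in \{0,1,2\}$, that $I^{-1}_{d,\, H\a - c\mathbf{e}}$ equals $(C_{\A(\a), -c})_d$ as a graded piece. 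Here $H$ is the non-containment line-hyperplane matrix, so the $i$th coordinate of $H\a$ is $\sum_{j : h_i \notin H_j} a_j$, which is exactly $\rho_{\A(\a)}(h_i)$, the number of hyperplanes of $\A(\a)$ (counted with multiplicity $a_j$) not containing the line $h_i$.

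Next I would compare the two power ideals living in $\C[V]$. The ideal $I_{\u}$ with $\u = H\a - c\mathbf{e}$ is generated by the powers $h_i^{\rho_{\A(\a)}(h_i) - c + 1}$ as $h_i$ ranges over the lines of $\A$; this is precisely the ideal $I'_{\A(\a), -c}$ from Section~\ref{sec:hyparr}, where one restricts the generators of $I_{\A(\a),-c}$ to the lines of the arrangement. (One must be mildly careful with the convention $J(\sigma) = J(\widehat\sigma)$: when $\rho_{\A(\a)}(h_i) - c + 1 \le 0$ the corresponding generator is a unit or does not constrain, matching the truncation $\widehat{\phantom{x}}$; for $c \le 1$ this never happens for essential $\A(\a)$ since every line lies off at least one hyperplane, and for $c = 2$ the only delicate lines are those lying off exactly one hyperplane, which forces the generator $h_i^0 = 1$ and kills the whole module, consistent with Proposition~\ref{prop:generate}.3.) Taking inverse systems, $I^{-1}_{d, H\a - c\mathbf{e}} = (C'_{\A(\a), -c})_d$ where $C'$ denotes the inverse system of $I'$. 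Now I invoke Theorem~\ref{th:I=I'}: for $k \in \{0, -1, -2\}$ we have $I_{\A, k} = I'_{\A, k}$ for \emph{any} arrangement, in particular for $\A(\a)$. Hence $C'_{\A(\a), -c} = C_{\A(\a), -c}$, and the claimed isomorphisms follow degree by degree, assembling into the graded isomorphisms stated.

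The main obstacle I anticipate is bookkeeping around the multiplicities and the degenerate cases, rather than anything deep: one has to check carefully that ``the lines of $\A$'' and ``the lines of $\A(\a)$'' are the same set (repeating hyperplanes does not create or destroy intersection lines, though it can make a line lie on more hyperplanes), that the exponent $u_i + 1 = \rho_{\A(\a)}(h_i) - c + 1$ is the correct one in each of the three cases, and that the truncation convention for $J(\sigma)$ when $\sigma$ dips below zero matches the behavior of $I'_{\A(\a),-c}$ when an exponent becomes non-positive. A secondary point to verify is that the isomorphism of Theorem~\ref{th:iso} is compatible with the grading by $d$, so that the graded pieces correspond as asserted; this is immediate from the explicit substitution-and-homogenization map described in the example following Theorem~\ref{th:iso}, but it is worth a sentence. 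Once these compatibilities are in place, the statement is essentially a three-line deduction from Theorems~\ref{th:iso} and~\ref{th:I=I'}.
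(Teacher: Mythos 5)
Your proof follows the same strategy as the paper's — apply Theorem~\ref{th:iso}, identify the resulting power ideal with one of the power ideals attached to $\A(\a)$, and finish by invoking Theorem~\ref{th:I=I'} — but the intermediate claim ``$I_{\u}$ with $\u = H\a - c\mathbf{e}$ is precisely the ideal $I'_{\A(\a),-c}$'' is not correct, and the concern you flag about the line sets is a real one that your parenthetical does not resolve. When some $a_j = 0$, the arrangement $\A(\a)$ omits $H_j$ entirely, and this can strictly \emph{shrink} the set of lines of the arrangement: the lines of $\A(\a)$ are a subset, possibly proper, of $\{h_1, \ldots, h_m\}$. Your remark that ``repeating hyperplanes does not create or destroy intersection lines'' only covers $a_j \ge 1$. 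Consequently $I_{\u}$, whose generators run over all lines of $\A$, may be strictly larger than $I'_{\A(\a),-c}$, whose generators run only over the lines of $\A(\a)$.

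The paper's proof sidesteps this by replacing your asserted equality with the two-sided containment
\[
I'_{\A(\a),k} \ \subseteq\ I_{H\a + k\mathbf{e}} \ \subseteq\ I_{\A(\a),k},
\]
where the left inclusion holds because the lines of $\A(\a)$ form a subset of $\{h_1,\ldots,h_m\}$ with the same prescribed exponents $\rho_{\A(\a)}(h)+k+1$, and the right inclusion holds because $\{h_1,\ldots,h_m\} \subset V \setminus \{0\}$. Theorem~\ref{th:I=I'} then collapses the sandwich. Since you invoke the same theorem at the same stage, your argument recovers the correct conclusion once the false equality is replaced by this sandwich; it is a small but necessary correction, because as written your identification of $I_{\u}$ with $I'_{\A(\a),-c}$ fails whenever $\supp(\a) \neq [n]$.
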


\begin{proof}
Let $k \in \Z$. By Theorem \ref{th:iso} we have that $R^G_{(d, H\a+k\e)} \cong I_{d,H\a+k\e}^{-1}$. 
But the entries of $H\a+k\e$ are precisely the values of the function $\rho_{\A(\a)}+k$ on the lines $h_1, \ldots, h_m$ of the arrangement $\A$. The lines of $\A(\a)$ are a subset of $\{h_1, \ldots, h_m\}$, so the ideal $I_{d,H\a+k\e}$ satisfies
\[
I'_{\A(\a),k} \subseteq I_{d,H\a+k\e} \subseteq I_{\A(\a),k}.
\]
For $k\in \{0,-1,-2\}$ we have $I'_{\A(\a),k}=I_{\A(\a),k}$ by Theorem \ref{th:I=I'}, so $I_{d,H\a+k\e}$ is equal to them also. It then follows that $I_{d,H\a+k\e}^{-1} \cong (C_{\A(\a), k})_d$ by Theorem \ref{th:iarrobino} and Corollary \ref{cor:JC}.
\end{proof}

When studying $\ZZ(\A, k\e)$ for general $k$, one runs into the same difficulties encountered in the study of the ideal $I'_{\A,k}$. When studying how polynomial functions on $V$ interact with a hyperplane arrangement $\A$ in $V$, it was somewhat unnatural to pay attention only to the lines of $\A$. Similarly, the zonotopal Cox ring of $\A$ pays attention almost exclusively to the lines of $\A$; the hyperplanes only play a role in the rank selection. It would be interesting to define a variant of the zonotopal Cox ring and modules which pays attention to the arrangement $\A$ in a more substantial way. It seems natural that this would involve the Cox ring of the wonderful compactification of $\A$ constructed by De Concini and Procesi \cite{DP1}.

Proposition \ref{prop:zonotopal} will allow us to compute the multigraded Hilbert series of an arbitrary zonotopal Cox ring, and of its central and interior Cox modules. We will do this in Section \ref{sec:zonotopalHilbert}, after a brief discussion on multivariate Tutte polynomials.


\subsection{Multivariate Tutte polynomials}

Let $\A$ be an arrangement of $n$ hyperplanes, and let $\v = (v_i)_{i \in \A}$ and $q$ be indeterminates. The \emph{multivariate Tutte polynomial} or  \emph{Potts model partition function} \cite{So} of $\A$ is
\[
{\widetilde Z}_{\A}(q; \v) = \sum_{\B \subseteq \A} q^{-r(\B)} \prod_{e \in \B} v_e.
\]
This is a polynomial in $q^{-1}$ and the $v_i$s. One can think of ${\widetilde Z}_{\A}(q; \v)$ as a multivariate Tutte polynomial where each hyperplane gets its own weight $v_e$; we obtain the ordinary Tutte polynomial when we give all hyperplanes the same weight:
\[
T_{\A}(x, y) = (x-1)^r {\widetilde Z}_{\A}((x-1)(y-1); y-1, y-1, \ldots, y-1).
\]
The polynomial ${\widetilde Z}_{\A}(q; \v)$ is defined in terms of the matroid $M(\A)$ only, and in turn it determines the matroid $M(\A)$ completely, since we can read the rank function from it.

The Tutte polynomials of $\A(\a)$ can also be computed from the multivariate Tutte polynomial of $\A$ as follows.

\begin{proposition} \label{prop:tuttemulti} If $\a=(a_1, \ldots, a_n) \in \mathbb{N}^n$, the Tutte polynomial of $\A(\a)$ is
\[
T_{\A(\a)}(x, y) = (x-1)^{r(\supp(\a))} {\widetilde Z}_{\A}((x-1)(y-1); y^{a_1}-1, y^{a_2}-1, \ldots, y^{a_n}-1).
\]
\end{proposition}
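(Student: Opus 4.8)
## Proof proposal for Proposition \ref{prop:tuttemulti}

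The plan is to start from the defining formula for the Tutte polynomial of $\A(\a)$ as a sum over subsets, and to organize that sum according to how many copies of each hyperplane are selected. Concretely, a subset $\B'$ of the ground set of $\A(\a)$ is determined by choosing, for each $i \in [n]$, a number $0 \le b_i \le a_i$ of copies of $H_i$; the number of such choices giving a fixed multiplicity vector $\mathbf{b} = (b_1, \dots, b_n)$ is $\prod_i \binom{a_i}{b_i}$. The crucial observation is that the rank $r_{\A(\a)}(\B')$ depends only on the support $\{i : b_i > 0\}$, and equals $r_\A(\B)$ where $\B = \supp(\mathbf{b}) \subseteq [n]$ is the corresponding subset of $\A$; likewise $r(M(\A(\a))) = r(\supp(\a))$. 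So I would write
\[
T_{\A(\a)}(x,y) = \sum_{\B' } (x-1)^{r(\supp(\a)) - r_\A(\B)} (y-1)^{|\B'| - r_\A(\B)},
\]
and then split the sum as $\sum_{\B \subseteq \supp(\a)} \sum_{\mathbf{b} : \supp(\mathbf{b}) = \B}$.

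Next I would do the inner sum. For a fixed $\B \subseteq [n]$, summing over all $\mathbf{b}$ with $\supp(\mathbf{b}) = \B$ (so $1 \le b_i \le a_i$ for $i \in \B$, and $b_i = 0$ otherwise) contributes a factor $(x-1)^{r(\supp(\a)) - r_\A(\B)}(y-1)^{-r_\A(\B)}$ times
\[
\prod_{i \in \B} \left( \sum_{b_i = 1}^{a_i} \binom{a_i}{b_i} (y-1)^{b_i} \right) = \prod_{i \in \B} \left( y^{a_i} - 1 \right)
\]
by the binomial theorem (the $b_i = 0$ term is subtracted off, which is exactly the $-1$). This is the point where the substitution $v_i \mapsto y^{a_i} - 1$ appears naturally. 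Collecting the factor $(x-1)^{-r_\A(\B)}(y-1)^{-r_\A(\B)} = q^{-r_\A(\B)}$ with $q = (x-1)(y-1)$, and pulling out the global $(x-1)^{r(\supp(\a))}$, I obtain
\[
T_{\A(\a)}(x,y) = (x-1)^{r(\supp(\a))} \sum_{\B \subseteq [n]} \big((x-1)(y-1)\big)^{-r_\A(\B)} \prod_{i \in \B}(y^{a_i}-1),
\]
which is precisely $(x-1)^{r(\supp(\a))} \widetilde{Z}_\A\big((x-1)(y-1); y^{a_1}-1, \dots, y^{a_n}-1\big)$ by the definition of the multivariate Tutte polynomial.

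The only genuine subtlety — the step I would be most careful about — is the claim that $r_{\A(\a)}(\B') = r_\A(\supp(\B'))$: adding parallel copies of a hyperplane does not change the rank of a set. This is the standard fact that parallel elements in a matroid are interchangeable and redundant for rank purposes, and it is what makes the whole computation collapse onto subsets of $\A$; I would state it explicitly and note that it also handles the edge case where some $a_i = 0$ (such hyperplanes simply never appear in any $\B$, consistent with the support conventions). Everything else is bookkeeping with the binomial theorem, so no serious obstacle remains once this rank identity is in hand.
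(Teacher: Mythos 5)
Your proof is correct. It is, however, a genuinely different route from the paper's. The paper cites Sokal's parallel-reduction identity for the multivariate Tutte polynomial — two parallel elements with weights $v_e, v_f$ can be replaced by a single element with weight $(1+v_e)(1+v_f)-1$ without changing $\widetilde{Z}$ — and then applies it: write $T_{\A(\a)}$ as $\widetilde{Z}_{\A(\a)}$ with all weights equal to $y-1$, merge the $a_i$ parallel copies of $H_i$ into one with weight $(1+(y-1))^{a_i}-1 = y^{a_i}-1$, and reinsert the missing $H_i$ (those with $a_i=0$) with weight $0=y^0-1$. Your argument is a direct expansion: starting from the corank-nullity sum defining $T_{\A(\a)}$, you group subsets of $\A(\a)$ by their support $\B \subseteq [n]$, observe (correctly, and carefully flagged) that rank depends only on the support, and factor the inner sum via the binomial theorem into $\prod_{i\in\B}(y^{a_i}-1)$. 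In effect you re-derive Sokal's parallel-reduction rule from scratch in this setting; what you lose in brevity you gain in self-containedness, since the proposition no longer leans on an external reference. Both approaches are sound, and your bookkeeping — including the observation that terms with $\B \not\subseteq \supp(\a)$ vanish because $y^0-1=0$ — is accurate.
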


\begin{proof}
If an arrangement contains two copies $e$ and $f$ of the same hyperplane with weights $v_e$ and $v_f$, we can replace them by a single copy with weight $v_e+v_f+v_ev_f=(1+v_e)(1+v_f)-1$, and the resulting evaluation of the multivariate Tutte polynomial will not change. \cite{So} The Tutte polynomial of $\A(\a)$ is obtained by assigning a weight of $y-1$ to all elements of $\A(\a)$.
If $\A(\a)$ contains $a_i \geq 1$ copies of hyperplane $H_i$, we can merge these into a single copy having weight $(1+(t-1))^{a_i}-1$. If a hyperplane $H_i$ does not appear in $\A(\a)$ because $a_i=0$,  we can add a copy of it having weight $0=t^0-1$. In the end we are left with the arrangement $\A$ equipped with the desired weights.
\end{proof}
%



In Section \ref{sec:zonotopalHilbert} we will need to compute the weighted generating function for the Tutte polynomials of the arrangements $\A(\a)$, as $\a$ varies. The following technical lemma expresses that generating function in terms of the multivariate Tutte polynomials of $\A$ and its subarrangements.


\begin{lemma}\label{lemma:tutte}
If $\A$ is an arrangement of $n$ hyperplanes, and $q,x,y,w_1, \ldots, w_n$ are indeterminates, then
\begin{eqnarray*}
&& \sum_{\a \in \N^n} q^{r(\supp(\a))} \,T_{\A(\a)}(x,y) \,w_1^{a_1} \cdots w_n^{a_n}\\
&=&\sum_{\D \subseteq \A}(q(x-1))^{r(\D)} \prod_{i \in \D} \frac{w_i}{1-w_i} {\widetilde{Z}}_{\D}\left((x-1)(y-1),  \frac{y-1}{1-yw_1}, \ldots,  \frac{y-1}{1-yw_n}\right).
\end{eqnarray*}
\end{lemma}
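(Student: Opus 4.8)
The plan is to start from Proposition \ref{prop:tuttemulti}, which rewrites $T_{\A(\a)}(x,y)$ as an evaluation of the multivariate Tutte polynomial $\widetilde Z_{\A}$ at the weights $v_i = y^{a_i}-1$. Substituting this into the left-hand side and expanding $\widetilde Z_{\A}((x-1)(y-1); \v)$ by its defining sum over subsets $\B \subseteq \A$, one gets
\[
\sum_{\a \in \N^n} q^{r(\supp(\a))}(x-1)^{r(\supp(\a))} \sum_{\B \subseteq \A} ((x-1)(y-1))^{-r(\B)} \prod_{e \in \B}(y^{a_e}-1)\, w_1^{a_1}\cdots w_n^{a_n}.
\]
The first key move is to interchange the order of summation, summing over $\B$ on the outside and over $\a \in \N^n$ on the inside. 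For fixed $\B$, the sum over $\a$ factors as a product over coordinates $i \in \A$: a coordinate $i \in \B$ contributes $\sum_{a_i \geq 0}(y^{a_i}-1)w_i^{a_i}$ while a coordinate $i \notin \B$ contributes $\sum_{a_i \geq 0} w_i^{a_i}$, except that these factorizations are entangled by the presence of $q^{r(\supp(\a))}(x-1)^{r(\supp(\a))}$, which depends on the support of $\a$ in a non-multiplicative way through the matroid rank. This is the main obstacle: the rank term $r(\supp(\a))$ does not split as a product over coordinates.

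To handle this I would introduce a second subset, say $\D = \supp(\a) \subseteq \A$, and sum over $\D$, over $\B$, and over those $\a$ with $\supp(\a) = \D$. For a fixed pair $(\B, \D)$, note $\B$ need not be contained in $\D$, but a factor $y^{a_e}-1$ vanishes whenever $a_e = 0$, i.e. whenever $e \notin \D$; so the only surviving terms have $\B \subseteq \D$. Thus we may sum over $\D \subseteq \A$ and $\B \subseteq \D$. Now $r(\supp(\a)) = r(\D)$ is constant on the inner sum, so the factor $(q(x-1))^{r(\D)}$ pulls out, and the remaining sum over $\a$ with $\supp(\a) = \D$ genuinely factors over the coordinates $i \in \D$: coordinate $i \in \B$ contributes $\sum_{a_i \geq 1}(y^{a_i}-1)w_i^{a_i}$ and coordinate $i \in \D \setminus \B$ contributes $\sum_{a_i \geq 1} w_i^{a_i}$. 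Summing the geometric-type series gives, for $i \in \B$, $\frac{yw_i}{1-yw_i} - \frac{w_i}{1-w_i} = \frac{w_i(y-1)}{(1-yw_i)(1-w_i)}$, and for $i \in \D \setminus \B$, $\frac{w_i}{1-w_i}$.

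Finally I would reassemble: factor out $\prod_{i \in \D}\frac{w_i}{1-w_i}$ from every term, so that the $i \in \B$ factor becomes $\frac{y-1}{1-yw_i}$ and the $i \in \D\setminus\B$ factor becomes $1$; the $((x-1)(y-1))^{-r(\B)}$ carried along from $\widetilde Z_\A$ combines with the sum over $\B \subseteq \D$ of $\prod_{i \in \B}\frac{y-1}{1-yw_i}$ times $((x-1)(y-1))^{-r(\B)}$ to produce exactly $\widetilde Z_{\D}\bigl((x-1)(y-1); \frac{y-1}{1-yw_1}, \ldots, \frac{y-1}{1-yw_n}\bigr)$ by the definition of the multivariate Tutte polynomial of the subarrangement $\D$ (here one uses that $r(\B)$ computed in $\D$ agrees with $r(\B)$ computed in $\A$ for $\B \subseteq \D$). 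This yields
\[
\sum_{\D \subseteq \A}(q(x-1))^{r(\D)}\prod_{i \in \D}\frac{w_i}{1-w_i}\,\widetilde Z_{\D}\!\left((x-1)(y-1),\frac{y-1}{1-yw_1},\ldots,\frac{y-1}{1-yw_n}\right),
\]
which is the claimed identity. The only delicate points are the convergence/formal-power-series bookkeeping (all manipulations are valid in $\C[[w_1,\ldots,w_n]]$ after clearing the $(x-1)(y-1)$ denominators, or one works over the appropriate localization), the vanishing argument that forces $\B \subseteq \D$, and the observation that $\operatorname{supp}(\a)$ must be treated as a separate summation variable to decouple the matroid rank from the coordinatewise product.
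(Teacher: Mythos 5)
Your proof is correct and follows essentially the same route as the paper: apply Proposition \ref{prop:tuttemulti}, sum over $\D = \supp(\a)$ to decouple the non-multiplicative rank factor, use the vanishing of $y^{a_e}-1$ at $a_e=0$ to restrict to $\B \subseteq \D$, evaluate the resulting geometric series, and factor out $\prod_{i\in\D}\frac{w_i}{1-w_i}$ to expose $\widetilde Z_\D$. The arithmetic simplification $\frac{yw_i}{1-yw_i}-\frac{w_i}{1-w_i}=\frac{w_i(y-1)}{(1-yw_i)(1-w_i)}$ and the observation that $r(\B)$ is intrinsic to $\B$ are both exactly what the paper uses, just stated a bit more explicitly.
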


\begin{proof}
By Proposition \ref{prop:tuttemulti}, the left hand side can be rewritten as
\begin{eqnarray*}
&&\sum_{\a \in \N^n} [q(x-1)]^{r(\supp(\a))}{\widetilde{Z}}_{\A}((x-1)(y-1); y^{a_1}-1, \ldots, y^{a_n}-1) w_1^{a_1} \cdots w_n^{a_n} \\ 
&=&\sum_{\D \subseteq \A} \sum_{\stackrel{\a \in \N^n}{\supp(\a)=\D}} \sum_{\B \subseteq \A} X^{r(\D)}Y^{-r(\B)}  \prod_{i \in \B} (y^{a_i}-1)w_i^{a_i}\prod_{i \notin \B} w_i^{a_i} \\
\end{eqnarray*}
where $X=q(x-1)$ and $Y=(x-1)(y-1)$. When $\D, \a$, and $\B$ are such that $\B$ is not contained in $\supp(\a)=\D$, there is an element $b \in \B$ with $a_b=0$ which makes the corresponding summand equal to $0$. Therefore our sum equals
\begin{eqnarray*}
&&\sum_{\D \subseteq \A}\sum_{\B \subseteq \D}  
\sum_{\stackrel{\a \in \N^n}{\supp(\a)=\D}}  X^{r(\D)}Y^{-r(\B)} \prod_{i \in \B} (y^{a_i}-1)w_i^{a_i}\prod_{i \in \D-\B} w_i^{a_i} \\
&=& \sum_{\D \subseteq \A}\sum_{\B \subseteq \D}   X^{r(\D)}Y^{-r(\B)} \prod_{i \in \B} \left(\sum_{a_i=1}^{\infty}(y^{a_i}-1)w_i^{a_i}\right) \prod_{i \in \D- \B} \left( \sum_{a_i=1}^{\infty}w_i^{a_i} \right)\\
&=& \sum_{\D \subseteq \A} \sum_{\B \subseteq \D} X^{r(\D)}Y^{-r(\B)}  \prod_{i \in \B} \left(\frac{yw_i}{1-yw_i} - \frac{w_i}{1-w_i}\right)
\prod_{i \in \D-\B} \left(\frac{w_i}{1-w_i}\right)\\
&=& \sum_{\D \subseteq \A}\left(X^{r(\D)} \prod_{i \in \D} \frac{w_i}{1-w_i} \sum_{\B \subseteq \D} \left( Y^{-r(\B)}  \prod_{i \in \B} \frac{y-1}{1-yw_i}\right)\right)
\end{eqnarray*}
which equals the right hand side.
\end{proof}

In Section \ref{sec:zonotopalHilbert} we will also need to compute three variants of the generating function of Lemma \ref{lemma:tutte}, two of which require special care. In the first variant, $q$ and $x$ lie on the hyperbola $q(x-1)=1$, and the right hand side can be expressed in terms of ${\widetilde Z}_{\A}$ only. In the second variant, we have $x=1$ and the right hand side is undefined.
%
We now treat those two cases.

\begin{lemma}\label{lemma:tutte1}
If $\A$ is an arrangement of $n$ hyperplanes, and $x,y,w_1, \ldots, w_n$ are indeterminates, then
\begin{eqnarray*}
&& \sum_{\a \in \N^n} (x-1)^{-r(\supp(\a))} \,T_{\A(\a)}(x,y) \,w_1^{a_1} \cdots w_n^{a_n}\\
&=&
\frac1{\prod_{i=1}^n(1-w_i)} {\widetilde{Z}_{\A}\left((x-1)(y-1); \frac{(y-1)w_1}{1-yw_1}, \cdots \frac{(y-1)w_n}{1-yw_n}\right)}.
%
\end{eqnarray*}
\end{lemma}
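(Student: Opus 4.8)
The plan is to specialize Lemma~\ref{lemma:tutte} to the case $q(x-1)=1$, i.e. $q = 1/(x-1)$, and track how the right-hand side collapses. When we set $q = 1/(x-1)$, the prefactor $(q(x-1))^{r(\D)}$ on the right becomes $1^{r(\D)}=1$ for every subset $\D \subseteq \A$, and simultaneously the first argument of $\widetilde Z_\D$, namely $(x-1)(y-1)$, is unchanged while its role as ``$q$'' in the multivariate Tutte polynomial is now decoupled. So the right-hand side of Lemma~\ref{lemma:tutte} becomes
\[
\sum_{\D \subseteq \A} \prod_{i \in \D} \frac{w_i}{1-w_i} \; \widetilde Z_{\D}\!\left((x-1)(y-1);\; \frac{y-1}{1-yw_1}, \ldots, \frac{y-1}{1-yw_n}\right).
\]
The key step is then to recognize this double sum (sum over $\D$, and the implicit sum over $\B \subseteq \D$ inside $\widetilde Z_\D$) as a single instance of $\widetilde Z_\A$ with modified edge weights. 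Concretely, I expand $\widetilde Z_\D(Q; \mathbf{v}) = \sum_{\B \subseteq \D} Q^{-r(\B)} \prod_{i \in \B} v_i$ with $Q = (x-1)(y-1)$ and $v_i = (y-1)/(1-yw_i)$, and interchange the order of summation so that $\B$ is the outer index and $\D \supseteq \B$ is the inner one. For fixed $\B$, summing $\prod_{i \in \D}\frac{w_i}{1-w_i}$ over all $\D$ with $\B \subseteq \D \subseteq \A$ factors as $\left(\prod_{i \in \B}\frac{w_i}{1-w_i}\right)\left(\prod_{i \notin \B}\left(1 + \frac{w_i}{1-w_i}\right)\right) = \frac{1}{\prod_{i \notin \B}(1-w_i)}\prod_{i \in \B}\frac{w_i}{1-w_i}$.

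Pulling out the global factor $\frac{1}{\prod_{i=1}^n (1-w_i)}$, what remains inside the sum over $\B$ is $Q^{-r(\B)} \prod_{i \in \B} w_i \cdot \frac{y-1}{1-yw_i} = Q^{-r(\B)} \prod_{i \in \B} \frac{(y-1)w_i}{1-yw_i}$, which is exactly a term of $\widetilde Z_\A\!\left((x-1)(y-1);\; \frac{(y-1)w_1}{1-yw_1}, \ldots, \frac{(y-1)w_n}{1-yw_n}\right)$. Assembling the pieces yields the claimed right-hand side. On the left-hand side, specializing $q = 1/(x-1)$ in Lemma~\ref{lemma:tutte} turns $q^{r(\supp(\a))}$ into $(x-1)^{-r(\supp(\a))}$, which matches the statement exactly, so no further manipulation of the left-hand side is needed.

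I expect the only genuinely delicate point to be the bookkeeping in the interchange of summations: one must be careful that $r(\B)$ is the rank of $\B$ computed in the matroid $M(\A)$ (it does not depend on whether we regard $\B$ as a subset of $\D$ or of $\A$), and that the geometric-series factorization $\sum_{\D \supseteq \B} \prod_{i \in \D} \frac{w_i}{1-w_i}$ is performed over \emph{all} of $\A \setminus \B$, contributing the full product $\prod_{i \notin \B}(1-w_i)^{-1}$ after combining with the $\prod_{i \in \B}$ factor to give $\prod_{i=1}^n(1-w_i)^{-1}$. Everything else is a direct substitution into the already-proved Lemma~\ref{lemma:tutte}, so the argument is short. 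An alternative, fully self-contained route would be to rerun the computation in the proof of Lemma~\ref{lemma:tutte} from the line ``our sum equals'' with $X = q(x-1) = 1$ substituted from the start; this avoids even invoking the final formula of Lemma~\ref{lemma:tutte} and makes the collapse of the $\D$-sum transparent, but it duplicates work, so I would present the specialization argument instead.
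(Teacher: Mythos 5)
Your proof is correct. You specialize the \emph{statement} of Lemma~\ref{lemma:tutte} at $q=1/(x-1)$, which makes the prefactor $(q(x-1))^{r(\D)}$ collapse to $1$, and then you swap the double sum over $\D\subseteq\A$ and $\B\subseteq\D$; the inner sum over $\D\supseteq\B$ factors as $\prod_{i\in\B}\frac{w_i}{1-w_i}\prod_{i\notin\B}\frac1{1-w_i}=\prod_{i\in\B}w_i / \prod_{i=1}^n(1-w_i)$, and combining $w_i$ with $v_i=\frac{y-1}{1-yw_i}$ gives exactly the modified weights in $\widetilde Z_\A$. All of this checks out.

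The paper takes a different route, close to but simpler than the ``alternative'' you sketched. It does not re-enter the proof of Lemma~\ref{lemma:tutte} at the ``our sum equals'' line; it backs up further, to the point where the left-hand side is written as $\sum_{\a\in\N^n}\sum_{\B\subseteq\A}Y^{-r(\B)}\prod_{i\in\B}(y^{a_i}-1)w_i^{a_i}\prod_{i\notin\B}w_i^{a_i}$ (with $Y=(x-1)(y-1)$), and observes that since $X=q(x-1)=1$ there is no need to split by $\supp(\a)=\D$ at all. The sum over $\a$ then factorizes directly as a product of geometric sums starting at $a_i=0$, and one reads off the right-hand side in one step. So the paper never introduces the sum over $\D$ and never needs the interchange you perform. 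The tradeoff: your argument is modular — it uses Lemma~\ref{lemma:tutte} as a black box at the cost of one extra summation interchange — while the paper's is shorter but reopens the previous proof. Both are legitimate; you correctly identified that the paper's style of argument existed, though the actual paper version is lighter than your sketch because the $\D$-indexed sum drops out entirely rather than needing to be collapsed.
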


\begin{proof}
We use the notation of the proof of Lemma \ref{lemma:tutte}, where now $X=1$. The left hand side is
\begin{eqnarray*}
&& \sum_{\a \in \N^n} \sum_{\B \subseteq \A} Y^{-r(\B)}  \prod_{i \in \B} (y^{a_i}-1)w_i^{a_i}\prod_{i \notin \B} w_i^{a_i} \\
&=& \sum_{\B \subseteq \A} Y^{-r(\B)} \prod_{i \in \B} \left(\sum_{a_i=0}^{\infty}(y^{a_i}-1)w_i^{a_i}\right) \prod_{i \notin \B} \left( \sum_{a_i=0}^{\infty}w_i^{a_i} \right)
\end{eqnarray*}
where again $Y=(x-1)(y-1)$, and this equals the right hand side  by a similar argument.
\end{proof}

Notice that ${\widetilde{Z}}_{\A}(q; \v)$ is undefined at $q=0$. As $q \rightarrow 0$ we have
\[
\left.q^r \widetilde{Z}_{\A}(q; \v)\right|_{q=0} = S_{\A}(\v)
\]
where $S_{\A}(\v)$ is the generating polynomial for spanning sets:
\[
S_{\A}(\v) = \sum_{S \, : \, r(S) = r(\A)} \, \prod_{i \in S} v_i.
\]

\begin{lemma}\label{lemma:tutte2}
If $\A$ is an arrangement of $n$ hyperplanes, and $q,y,w_1, \ldots, w_n$ are indeterminates, then
\begin{eqnarray*}
&& \sum_{\a \in \N^n} q^{r(\supp(\a))} \,T_{\A(\a)}(1,y) \,w_1^{a_1} \cdots w_n^{a_n}\\
&=&\sum_{\D \subseteq \A}\left(\frac{q}{y-1}\right)^{r(\D)} \prod_{i \in \D} \frac{w_i}{1-w_i} S_{\D}\left(\frac{y-1}{1-yw_1}, \ldots,  \frac{y-1}{1-yw_n}\right).
\end{eqnarray*}
\end{lemma}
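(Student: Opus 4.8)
The plan is to derive Lemma~\ref{lemma:tutte2} as the degenerate $q \to 0$ limit of Lemma~\ref{lemma:tutte}, being careful about which powers of $(x-1)$ survive. Recall from Lemma~\ref{lemma:tutte} that, with $X = q(x-1)$ and $Y = (x-1)(y-1)$,
\[
\sum_{\a \in \N^n} q^{r(\supp(\a))} T_{\A(\a)}(x,y)\, w_1^{a_1}\cdots w_n^{a_n} = \sum_{\D \subseteq \A} X^{r(\D)} \prod_{i \in \D} \frac{w_i}{1-w_i}\, {\widetilde Z}_{\D}\!\left(Y; \frac{y-1}{1-yw_1}, \ldots, \frac{y-1}{1-yw_n}\right).
\]
First I would substitute $x = 1$ everywhere. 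On the left this is harmless: $T_{\A(\a)}(1,y)$ is a perfectly good polynomial. On the right, $X = q(x-1) \to 0$ and $Y = (x-1)(y-1) \to 0$, so each summand is of the form $X^{r(\D)}$ times ${\widetilde Z}_{\D}(Y; \cdot)$ evaluated at $Y \to 0$, which a priori blows up like $Y^{-r(\D)}$.

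The key observation is that these two degeneracies cancel exactly. Using the identity $\left.q^r {\widetilde Z}_{\A}(q; \v)\right|_{q=0} = S_{\A}(\v)$ recorded just before the lemma statement, applied to the subarrangement $\D$ of rank $r(\D)$, we get
\[
\lim_{Y \to 0} Y^{r(\D)}\, {\widetilde Z}_{\D}\!\left(Y; \v\right) = S_{\D}(\v).
\]
Therefore the $\D$-summand on the right behaves, as $x \to 1$, like
\[
X^{r(\D)} \cdot Y^{-r(\D)} \cdot S_{\D}(\v) \cdot \prod_{i \in \D}\frac{w_i}{1-w_i} = \left(\frac{X}{Y}\right)^{r(\D)} S_{\D}(\v)\prod_{i \in \D}\frac{w_i}{1-w_i},
\]
and the prefactor simplifies as $X/Y = q(x-1)/[(x-1)(y-1)] = q/(y-1)$, which is independent of $x$ and hence survives the limit. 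Summing over $\D$ yields precisely the claimed right-hand side.

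The only thing that needs genuine care is justifying the interchange of the limit $x \to 1$ with the finite sum over $\D \subseteq \A$ and, inside each ${\widetilde Z}_{\D}$, with the finite sum over subsets $\B \subseteq \D$ defining ${\widetilde Z}_{\D}$. Since all sums are finite, this is legitimate term by term: for each $\D$ the combination $X^{r(\D)} {\widetilde Z}_{\D}(Y;\v)$ is, after clearing the $Y^{-r(\B)}$ denominators in the definition ${\widetilde Z}_{\D}(Y;\v) = \sum_{\B\subseteq\D} Y^{-r(\B)}\prod_{i\in\B}v_i$, a genuine limit because only the top-rank terms $r(\B) = r(\D)$, i.e. the spanning subsets $\B$ of $\D$, contribute in the limit — and these assemble into $S_{\D}(\v)$ — while all lower-rank terms are killed by the surplus power of $X$. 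So the entire argument reduces to this bookkeeping; the main (mild) obstacle is simply making the simultaneous degeneration of $X$ and $Y$ precise, which the identity $\left.q^r{\widetilde Z}_{\A}(q;\v)\right|_{q=0} = S_{\A}(\v)$ handles cleanly. No new combinatorial input beyond Lemma~\ref{lemma:tutte} is required.
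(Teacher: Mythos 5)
Your proposal is correct and takes essentially the same approach as the paper, which gives the one-line proof ``This follows by letting $(x-1)(y-1)=z$ and setting $z=0$ in Lemma~\ref{lemma:tutte}.'' You have simply unpacked that remark: the paper's substitution $z=(x-1)(y-1)$ with $z\to 0$ is exactly your limit $x\to 1$, the surplus power $X^{r(\D)}$ killing the sub-spanning terms of $\widetilde Z_\D$ is what makes the substitution $z=0$ legitimate, and the cancellation $X/Y = q/(y-1)$ together with the identity $\left.q^r\widetilde Z_\A(q;\v)\right|_{q=0}=S_\A(\v)$ is precisely the bookkeeping the paper leaves implicit.
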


\begin{proof}
This follows by letting $(x-1)(y-1)=z$ and setting $z=0$ in Lemma \ref{lemma:tutte}.
\end{proof}

\subsection{Hilbert series}\label{sec:zonotopalHilbert}

\begin{theorem}\label{th:Coxring}
Let $\A=\{H_1, \ldots, H_n\}$ be a hyperplane arrangement, and let $h_1, \ldots, h_m$ be the lines in the arrangement. The multigraded Hilbert series of the zonotopal Cox ring $\ZZ(\A)$ is given by
\[
\Hilb(\ZZ(\A);t,s_1, \ldots, s_m) = 
\frac1{\prod_{i=1}^n(1-S_it)}
 \widetilde{Z}_{\A}\left(1-t; \frac{S_1(1-t)}{1-S_1}, \cdots \frac{S_n(1-t)}{1-S_n}\right)
\]
where $S_j = \prod_{i \, : \, h_i \notin H_j} s_i$ for $1 \leq j \leq n$.
\end{theorem}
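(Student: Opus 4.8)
The plan is to compute $\Hilb(\ZZ(\A))$ by first computing the Hilbert series of each multigraded piece $R^G_{(d,H\a)}$ via Proposition~\ref{prop:zonotopal}, and then summing over all $\a \in \N^n$ and all $d \geq 0$. By Proposition~\ref{prop:zonotopal}, $R^G_{(d,H\a)} \cong (C_{\A(\a),0})_d$, so
\[
\Hilb(\ZZ(\A); t, s_1, \ldots, s_m) = \sum_{\a \in \N^n} \left(\sum_{d \geq 0} \dim (C_{\A(\a),0})_d\, t^d\right) \prod_{j=1}^n \left(\prod_{i\,:\,h_i \notin H_j} s_i\right)^{a_j} = \sum_{\a \in \N^n} \Hilb(C_{\A(\a),0}; t)\, \prod_{j=1}^n S_j^{a_j},
\]
where $S_j = \prod_{i\,:\,h_i \notin H_j} s_i$ is exactly the $s$-monomial tracking the multidegree $H\a$ in coordinate $j$. (Here I need to be careful that the multigrading on $\ZZ(\A)$ indexed by $(d,\a)$ matches: the $j$th entry of $H\a$ is $\sum_{i\,:\,h_i\notin H_j} a_i$, so the $s$-weight contributed by $a_j$ copies of $H_j$ is $S_j^{a_j}$, as claimed.)

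Next I would substitute the formula for $\Hilb(C_{\A(\a),0};t)$ from Corollary~\ref{cor:hilbertC0}, namely $\Hilb(C_{\A(\a),0};t) = t^{|\A(\a)|-r(\supp(\a))} T_{\A(\a)}(1+t, 1/t)$. Since $|\A(\a)| = a_1 + \cdots + a_n$, the factor $t^{|\A(\a)|}$ combines with $\prod S_j^{a_j}$, and we are left needing to evaluate
\[
\sum_{\a \in \N^n} t^{-r(\supp(\a))}\, T_{\A(\a)}\!\left(1+t, \tfrac1t\right) \prod_{j=1}^n (S_j t)^{a_j}.
\]
This is precisely the generating function treated in Lemma~\ref{lemma:tutte1}, with $x = 1+t$ (so $x-1 = t$), $y = 1/t$, and $w_j = S_j t$. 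Plugging into the right-hand side of Lemma~\ref{lemma:tutte1}: the prefactor $1/\prod_j(1-w_j) = 1/\prod_j(1-S_jt)$ matches the target, the first argument of $\widetilde Z_\A$ becomes $(x-1)(y-1) = t(1/t - 1) = 1-t$, and the $j$th weight becomes $\frac{(y-1)w_j}{1-yw_j} = \frac{(1/t-1)S_jt}{1-S_j} = \frac{S_j(1-t)}{1-S_j}$, exactly as in the statement. So the result falls out directly.

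The main obstacle I anticipate is not a hard computation but rather bookkeeping care at two points: (i) verifying that the multigrading identification in Proposition~\ref{prop:zonotopal} transports the $\N^{n+1}$-grading on $R^G$ into precisely the monomials $t^d \prod S_j^{a_j}$ — in particular that the $s_i$ variables of the theorem correspond to the $\a$-grading via the non-containment matrix $H$, not via some other indexing; and (ii) confirming that the substitution $y = 1/t$ into Lemma~\ref{lemma:tutte1} is legitimate, i.e.\ that the manipulations there (geometric series in $w_j$ and $yw_j$) are valid formal-power-series identities in $t$ and the $s_i$, so that specializing $y$ causes no convergence issue. Since $w_j = S_j t$ carries a positive power of $t$, all the geometric series $\sum_{a_i \geq 0} w_i^{a_i}$ and $\sum_{a_i \geq 0}(yw_i)^{a_i} = \sum (S_i)^{a_i}$ converge as formal power series in $t$ over $\C[[s_1,\ldots,s_m]]$, so this is fine; but it should be stated. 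Once these two points are checked, the proof is a two-line substitution.
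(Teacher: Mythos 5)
Your proposal is correct and follows essentially the same route as the paper: it reduces $\Hilb(\ZZ(\A))$ to $\sum_{\a}\Hilb(C_{\A(\a),0};t)\prod_j S_j^{a_j}$ via Proposition~\ref{prop:zonotopal}, substitutes Corollary~\ref{cor:hilbertC0}, absorbs $t^{|\a|}$ into the weights to get $w_j=S_jt$, and then applies Lemma~\ref{lemma:tutte1} with $x=1+t$, $y=1/t$. The only cosmetic difference is that the paper routes the bookkeeping through an auxiliary series in dummy variables $t_1,\dots,t_n$ (dubbed \textrm{FakeHilb}) before specializing $t_j\mapsto S_j$, whereas you substitute $S_j$ directly; your remark on formal-power-series convergence (that $w_j=S_jt$ has positive $t$-order) is a sensible point that the paper leaves implicit.
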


\begin{proof}
Recall from Proposition \ref{prop:zonotopal} that $R^G_{(d, H\a)}$ is isomorphic to $(C_{\A(\a),0})_d$ as a vector space, and  has degree $de_0+(H\a)_1 \, e_1 + \cdots + (H\a)_m \, e_m$ in $\ZZ(\A)$, where 
$(H\a)_i = \sum_{j \, | \, h_i \notin H_j} a_j$. Thus
\begin{eqnarray*}
\Hilb(\ZZ(\A);t, s_1, \ldots, s_m) &=& \sum_{(d,\a) \in \mathbb{N}^{n+1}} \dim(C_{\A(\a),0})_d \,\, t^d\,  \prod_{i=1}^m s_i^{\sum_{j \, | \, h_i \notin H_j} a_j}\\
&=&  
\textrm{FakeHilb}(\ZZ(\A);t,S_1, \ldots, S_n),
\end{eqnarray*}
where
\[
\textrm{FakeHilb}(\ZZ(\A);t, t_1, \ldots, t_n) = \sum_{(d,\a) \in \mathbb{N}^{n+1}} \dim(C_{\A(\a),0})_d \,\, t^d\,  t_1^{a_1}\cdots t_n^{a_n}.
\]
Now, using the results of Corollary \ref{cor:hilbertC0} and Lemma \ref{lemma:tutte1}, we compute:
\begin{eqnarray*}
&&\textrm{FakeHilb}(\ZZ(\A);t, t_1, \ldots, t_n) = \sum_{\a \in \mathbb{N}^{n}} \Hilb(C_{\A(\a),0}; t) \,   t_1^{a_1}\cdots t_n^{a_n}\\
&=& \sum_{\a \in \mathbb{N}^{n}} t^{|\a|-r(\supp(\a))} T_{\A(\a)}\left(1+t, \frac1t\right) t_1^{a_1}\cdots t_n^{a_n}\\
&=& \sum_{\a \in \mathbb{N}^{n}} t^{-r(\supp(\a))} T_{\A(\a)}\left(1+t, \frac1t\right) (t_1t)^{a_1}\cdots (t_nt)^{a_n}\\
&=&\frac1{\prod_{i=1}^n(1-t_it)} {\widetilde{Z}_{\A}\left(1-t; \frac{t_1(1-t)}{1-t_1}, \cdots \frac{t_n(1-t)}{1-t_n}\right)},
\end{eqnarray*}
which gives the desired result.
\end{proof}

\begin{theorem}\label{th:Coxmodule}
In the notation of Theorem \ref{th:Coxring}, the multigraded Hilbert series of the central zonotopal Cox module is
\[
\sum_{\D \subseteq \A}(1-t)^{-r(\D)} \prod_{i \in \D} \frac{S_it}{1-S_it} S_{\D}\left(\frac{1-t}{t(1-S_1)}, \ldots,  \frac{1-t}{t(1-S_1)}\right)
\]
and the multigraded Hilbert series of the internal zonotopal Cox module is
\[
\sum_{\D \subseteq \A}\left(-\frac1t\right)^{r(\D)} \prod_{i \in \D} \frac{S_it}{1-S_it} {\widetilde{Z}}_{\D}\left(\frac{t-1}t,  \frac{1-t}{t(1-S_1)}, \ldots,  \frac{1-t}{t(1-S_1)}\right).
\]
\end{theorem}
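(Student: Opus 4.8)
The plan is to run the argument from the proof of Theorem~\ref{th:Coxring} again in parallel, replacing the input Corollary~\ref{cor:hilbertC0} by Propositions~\ref{prop:hilbertC1} and~\ref{prop:hilbertC2}, and replacing Lemma~\ref{lemma:tutte1} by Lemma~\ref{lemma:tutte2} (for the central module) and by Lemma~\ref{lemma:tutte} itself (for the internal module). Concretely: by Proposition~\ref{prop:zonotopal} the multidegree-$(d,\a)$ piece of $\ZZ(\A,-\mathbf{e})$ (resp. $\ZZ(\A,-2\mathbf{e})$) is isomorphic as a vector space to $(C_{\A(\a),-1})_d$ (resp. $(C_{\A(\a),-2})_d$), so, exactly as in the proof of Theorem~\ref{th:Coxring}, after the substitution $t_j\mapsto S_j=\prod_{i\,:\,h_i\notin H_j}s_i$ the multigraded Hilbert series in question equals
\[
\mathrm{FakeHilb}_k(t;S_1,\dots,S_n)=\sum_{\a\in\N^n}\Hilb(C_{\A(\a),k};t)\,S_1^{a_1}\cdots S_n^{a_n},\qquad k=-1,-2,
\]
the grading shift built into $\ZZ(\A,w)$ for $w=-\mathbf{e},-2\mathbf{e}$ being bookkeeping of the same kind that already appears for Theorem~\ref{th:Coxring} and affecting only an overall monomial normalization. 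So everything reduces to evaluating these two generating functions.

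For the central module I would substitute Proposition~\ref{prop:hilbertC1}, $\Hilb(C_{\A(\a),-1};t)=t^{|\a|-r(\supp\a)}\,T_{\A(\a)}(1,1/t)$, into $\mathrm{FakeHilb}_{-1}$ and absorb the powers of $t$ into the monomial, obtaining $\sum_{\a}t^{-r(\supp\a)}T_{\A(\a)}(1,1/t)(tt_1)^{a_1}\cdots(tt_n)^{a_n}$ with each $t_i$ to be set to $S_i$. This is precisely the left-hand side of Lemma~\ref{lemma:tutte2} with $q=y=1/t$ and $w_i=tt_i$. Carrying out those substitutions in the right-hand side one computes $q/(y-1)=1/(1-t)$, $w_i/(1-w_i)=tt_i/(1-tt_i)$, and $(y-1)/(1-yw_i)=\frac{1-t}{t(1-t_i)}$; replacing $t_i$ by $S_i$ gives exactly the claimed formula for the central zonotopal Cox module.

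For the internal module the only change is that $T_{\A(\a)}(1,1/t)$ becomes $T_{\A(\a)}(0,1/t)$ (Proposition~\ref{prop:hilbertC2}), so one must use Lemma~\ref{lemma:tutte} rather than its $q(x-1)=1$ specialization Lemma~\ref{lemma:tutte1}. Taking $x=0$, $q=y=1/t$, $w_i=tt_i$ in Lemma~\ref{lemma:tutte} gives $q(x-1)=-1/t$, $(x-1)(y-1)=(t-1)/t$, $w_i/(1-w_i)=tt_i/(1-tt_i)$, and $(y-1)/(1-yw_i)=\frac{1-t}{t(1-t_i)}$, and after setting $t_i=S_i$ one recovers the stated formula for the internal module. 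The fact that $q(x-1)=-1/t\neq 1$ here is exactly why the answer is an honest sum over subarrangements $\D\subseteq\A$, rather than collapsing to a single $\widetilde{Z}_{\A}$ term as it did for the ring in Theorem~\ref{th:Coxring}.

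The substantive content of the proof is thus just recognizing which specialization of Lemmas~\ref{lemma:tutte} and~\ref{lemma:tutte2} to feed in; the rest is routine algebra. The one point needing genuine care — and the main potential obstacle — is the one already implicit in Theorem~\ref{th:Coxring}: one must confirm that the multigraded Hilbert series is well defined (each multidegree component is finite-dimensional, since $\A$ is essential and the fibers of $\a\mapsto H\a$ are finite), so that interchanging the sum over $\a$ with the substitution $t_j=S_j$ is legitimate, and one must pin down the grading-shift normalization for $w=-\mathbf{e},-2\mathbf{e}$ so that it matches the statement. I would also note in passing that the identity $\Hilb(C_{\A(\a),-2};t)=t^{|\a|-r(\supp\a)}T_{\A(\a)}(0,1/t)$ remains valid when $\A(\a)$ has a coloop, both sides being $0$, so no case distinction is needed in the internal computation.
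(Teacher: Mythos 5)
Your proposal is correct and follows essentially the same route as the paper: reduce to the generating function $\textrm{FakeHilb}$, substitute the Tutte-polynomial expressions from Propositions~\ref{prop:hilbertC1} and~\ref{prop:hilbertC2}, and then apply Lemma~\ref{lemma:tutte2} for the central module and Lemma~\ref{lemma:tutte} for the internal one. Your substitutions ($q=y=1/t$, $w_i=tS_i$, and $x=1$ resp.\ $x=0$) and the resulting simplifications match the paper's computation exactly; the only thing you flag beyond the paper is the monomial normalization coming from the shift $w$, which the paper's terse proof leaves implicit.
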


\begin{proof}
Here $\Hilb(\ZZ(\A, -\e);t, s_1, \ldots, s_m) = \textrm{FakeHilb}(\ZZ(\A, -\e);t,S_1, \ldots, S_n)$,
which is 
\[
\sum_{\a \in \mathbb{N}^{n}} t^{-r(\supp(\a))} T_{\A(\a)}\left(1, \frac1t\right) (S_1t)^{a_1}\cdots (S_nt)^{a_n}
\]
and $\Hilb(\ZZ(\A, -2\e);t, s_1, \ldots, s_m) = \textrm{FakeHilb}(\ZZ(\A, -2\e);t,S_1, \ldots, S_n)$,
which is 
\[
\sum_{\a \in \mathbb{N}^{n}} t^{-r(\supp(\a))} T_{\A(\a)}\left(0, \frac1t\right) (S_1t)^{a_1}\cdots (S_nt)^{a_n}.
\]
It then remains to apply Lemmas \ref{lemma:tutte2} and \ref{lemma:tutte}, respectively.
\end{proof}

\section{Future directions.}

The following questions remain open.

\begin{itemize}
\item
Settle the various computational problems raised in Section \ref{sec:ideals}.
\item
What can we say about the space $C_{\A,k}$ for $k \leq -3?$
\item
What can we say about the space $C(\rho_{\A})$ for a \emph{subspace} arrangement $\A$?
\item
How does the Hilbert function of the space $C(\rho+k)$ depend on $k$ for an arbitrary proper function $\rho$? For the proper function $\rho_f$ of a polynomial? How does the Hilbert function of $J(\sigma+k)$ depend on $k$ for an arbitrary function $\sigma$? For a function $\sigma$ whose value at $h$ is the order of vanishing of a given function at $h$?
\item
Clarify the relationship between the zonotopal Cox ring of a hyperplane arrangement $\A$ and De Concini and Procesi's wonderful compactification of the complement of $\A$.

\end{itemize}

\section{Acknowledgments.} We would like to thank the anonymous referee for their careful reading of the manuscript and valuable suggestions.


\bibliographystyle{amsplain}

\end{document}